\documentclass[a4paper,11pt,reqno]{amsart} 

\usepackage{comment}
\usepackage[utf8]{inputenc}
\usepackage{pifont}
\usepackage[english]{babel}
\usepackage{amsmath}
\usepackage{amsthm}
\usepackage{amsfonts,mathrsfs,relsize,bigints,stmaryrd}
\usepackage{amssymb}
\usepackage{anysize}
\usepackage{hyperref}
\usepackage{esint}
\usepackage{appendix}
\usepackage{mathtools}
\usepackage{graphicx}
\usepackage{tabularx}
\usepackage{xcolor}
\usepackage[normalem]{ulem}
\usepackage{enumitem}
\usepackage{xurl}

\makeatletter  \@addtoreset{equation}{section} \makeatother
\newtheorem{lem}{Lemma}[section]
\newtheorem{theo}{Theorem}[section]
\newtheorem{cor}{Corollary}[section]
\newtheorem{pro}{Proposition}[section]

\newtheorem{rem}{Remark}[section]

\DeclareMathOperator{\R}{\mathbb{R}}

\DeclareMathOperator{\C}{\mathbb{C}}
\DeclareMathOperator{\T}{\mathbb{T}}
\DeclareMathOperator{\D}{\mathbb{D}}

\allowdisplaybreaks 

\usepackage{subcaption}

\begin{document}

\title{Uniformly Rotating Euler Configurations \\ with Multiple Vorticity Holes}

\author[V. Baroncini]{Vittorio Baroncini}
\address{Departamento de An\'alisis Matem\'atico \& IMUS, Universidad de Sevilla, 41012 Seville, Spain}
\email{vbaroncini@us.es}

\author[J. C. Cantero]{Juan Carlos Cantero}
\address{Departament de Matem\`atiques, Facultat de Ci\`encies, Universitat Aut\`onoma de Barcelona, 08193 Cerdanyola del Vall\`es, Spain}
\email{juancarlos.cantero@uab.cat}

\author[C. Garc\'ia]{Claudia Garc\'ia}
\address{ Departamento de Matem\'atica Aplicada \& Research Unit ``Modeling Nature'' (MNat), Facultad de Ciencias, Universidad de Granada, 18071 Granada, Spain}
\email{claudiagarcia@ugr.es}

\author[Z. Hassainia]{Zineb Hassainia}
\address{Departamento de Matem\'atica Aplicada \& Research Unit ``Modeling Nature'' (MNat), Facultad de Ciencias, Universidad de Granada, 18071 Granada, Spain}
\email{zinebhassainia@ugr.es}

\author[J. Mateu]{Joan Mateu}
\address{Departament de Matem\`atiques, Facultat de Ci\`encies, Universitat Aut\`onoma de Barcelona, 08193 Cerdanyola del Vall\`es, Spain}
\email{joan.mateu@uab.cat}

\begin{abstract}
We construct new families of uniformly rotating vortex-patch solutions of the two-dimensional incompressible Euler equations consisting of a simply connected outer patch and multiple interior interfaces, which can be interpreted geometrically as holes.
More precisely, each solution consists of a single outer vortex patch
enclosing $\mathbf m\geq2$ identical, highly concentrated inner components
arranged at the vertices of a regular $\mathbf m$-gon; the entire
configuration rotates rigidly in the clockwise direction.
 As the concentration parameter tends to zero, 
the inner components shrink and collapse simultaneously toward the origin,
while the outer boundary converges to the unit circle. The corresponding
vorticities converge, in the sense of measures, to a Rankine vortex
supplemented by a point vortex of circulation $-\mathbf m$ at its center.

The proof is based on a contour-dynamics formulation, a symmetry reduction
to two nonlinear boundary equations, and a suitable singular rescaling. We
then apply an implicit function theorem with a  continuous parameter
in symmetry-adapted Hölder spaces. To the best of our knowledge, this is the
first analytical construction of a desingularization regime in which several
concentrated inner components are contained in a common outer patch and
collapse simultaneously toward its center.
\end{abstract}

\maketitle

\section{Introduction}

The evolution of a two-dimensional inviscid and incompressible fluid is described by the Euler equations
\begin{equation} \label{ES}
    \begin{cases}
        \partial_t u + \left(u\cdot \nabla \right)u= -\nabla p, & \mbox{in } [0, \infty) \times \R^2, \\
        \operatorname{div}u = 0, & \mbox{in } [0, \infty) \times \R^2, \\
        u(0, \cdot) = u_0, & \mbox{in } \R^2,
    \end{cases}
\end{equation}
where the unknowns are the velocity vector field of the fluid $u = u(t, x) = \left(u_1(t, x), u_2(t, x)\right)$ and its scalar pressure $p = p(t, x)$, while $u_0 : \R^2 \to \R^2$ is the prescribed initial velocity field. Here, we adopt standard conventions
$$u \cdot \nabla := u_1 \partial_{x_1} + u_2 \partial_{x_2}, \qquad \operatorname{div}u := \partial_{x_1} u_1 + \partial_{x_2} u_2.$$
The vorticity is defined by
$$\omega := \nabla^{\perp} \cdot u = \partial_{x_1} u_2 - \partial_{x_2} u_1, \qquad \nabla^{\perp} := \left(-\partial_{x_2}, \partial_{x_1}\right),$$
and measures the local tendency of the fluid to rotate. Taking the curl of the momentum equation in \eqref{ES} eliminates the
pressure and yields the transport equation
$$
    \partial_t\omega+u\cdot\nabla\omega=0.
$$
Since $u$ is divergence free, it can be represented in terms of a stream
function $\psi$ through
$$
    u=\nabla^\perp\psi.    
$$
Thus, the connection between the stream function $\psi$ and the vorticity is given by the Poisson equation
$$
    \Delta\psi=\omega,
$$
and therefore
\begin{equation}
\label{steam_vorticity}
    \psi(t,x)
    =
    \frac{1}{2\pi}
    \int_{\mathbb R^2}
    \log|x-y|\,\omega(t,y)\,dy.
\end{equation}
Consequently, the velocity is recovered from the vorticity through the
Biot--Savart law
\begin{equation}
\label{Biot-Savart_Law}
    u(t,x)
    =
    \bigl(K*\omega\bigr)(t,x),
    \qquad
    K(x)
    :=
    \frac{1}{2\pi}\frac{x^\perp}{|x|^2},
    \qquad
    x^\perp:=(-x_2,x_1).
\end{equation}
Thus, the Euler equations can be written in vorticity form as
\begin{equation}
\label{ESVF}
\begin{cases}
    \partial_t\omega+u\cdot\nabla\omega=0,
    & (t,x)\in[0,\infty)\times\mathbb R^2,
    \\[1mm]
    u=K*\omega,
    & (t,x)\in[0,\infty)\times\mathbb R^2,
    \\[1mm]
    \omega(0,x)=\omega_0(x),
    & x\in\mathbb R^2.
\end{cases}
\end{equation}

\subsection{Vortex patches and rotating states}
Yudovich's classical theory \cite{Y63} establishes the global existence and
uniqueness of weak solutions of \eqref{ESVF} for initial data
$$
    \omega_0\in L^1(\mathbb R^2)\cap L^\infty(\mathbb R^2).
$$
Moreover, the vorticity is transported along the associated Lagrangian flow
$\Phi$, defined by
$$
    \Phi(t,x)
    =
    x+
    \int_0^t
    u\bigl(s,\Phi(s,x)\bigr)\,ds.
$$
More precisely,
$$
    \omega\bigl(t,\Phi(t,x)\bigr)=\omega_0(x).
$$

A fundamental class of solutions within this framework is provided by
vortex patches, for which the initial vorticity is the characteristic
function of a bounded domain $D_0 \subset \R^2$:
$$
    \omega_0=\mathbf 1_{D_0}.
$$
The transport property implies that
$$
    \omega(t,\cdot)=\mathbf 1_{D_t},
    \qquad
    D_t=\Phi(t,D_0).
$$
Hence, the patch dynamics may be described in terms of the evolution of the
boundary $\partial D_t$. If
$$
    z(t,\cdot):\mathbb T\longrightarrow\partial D_t
$$
is a regular parametrization, then its normal velocity must coincide with
the normal component of the fluid velocity:
\begin{equation*}
\label{CDE}
    \left[
        \partial_tz(t,\vartheta)
        -
        u\bigl(t,z(t,\vartheta)\bigr)
    \right]
    \cdot
    \vec{n}_{\partial D_t}\bigl(z(t,\vartheta)\bigr)
    =
    0,
    \qquad
    (t,\vartheta)\in[0,\infty)\times\mathbb T.
\end{equation*}
This is the contour-dynamics formulation of the vortex-patch problem.

Radially symmetric patches are stationary as sets, although the individual
fluid particles may rotate. The most familiar examples are the Rankine
vortex
$$
    D_0=B_R(0), \qquad {R > 0}
$$
and the annular patch
$$
    D_0
    =
    \left\{
        x\in\mathbb R^2:
        r<|x|<R
    \right\},
    \qquad
    0<r<R.
$$
A classical nonradial example is Kirchhoff's ellipse \cite{K74}, which
preserves its shape while rotating uniformly.

A bounded domain $D_0\subset\mathbb R^2$ is called a rotating vortex patch,
or a \emph{V-state}, if there exists an angular velocity $\Omega\in\mathbb R$ such that
$$
    D_t=e^{i\Omega t}D_0,
    \qquad t\geq0.
$$
Equivalently,
$$
    \omega(t,x)
    =
    \mathbf 1_{D_0}
    \left(
        e^{-i\Omega t}x
    \right).
$$
More generally, we shall use the term rotating vortex-patch configuration
for compactly supported, piecewise constant vorticities whose interfaces
rotate rigidly about the origin.

Motivated by the numerical observations of Deem and Zabusky \cite{DZ78},
Burbea \cite{B82} used local bifurcation theory to construct noncircular,
simply connected V-states. For each integer $\mathbf m\geq2$, he obtained a
local branch of $\mathbf m$-fold symmetric patches bifurcating from the unit
disk at
$$
    \Omega_{\mathbf m}
    =
    \frac{\mathbf m-1}{2\mathbf m}.
$$
Hmidi, Mateu and Verdera \cite{HMV13} subsequently refined this construction
and proved, in particular, the smoothness of the boundaries along the
bifurcating branches. Castro, C\'ordoba and G\'omez-Serrano
\cite{CCGS16} later established analyticity of these local V-state
boundaries and constructed analytic rotating patches bifurcating from
Kirchhoff ellipses. Global continua of simply connected V-states
bifurcating from the disk were constructed by Hassainia, Masmoudi and
Wheeler \cite{HMW20}, who also proved that the boundary of any sufficiently
regular rotating vortex patch is analytic.

The first systematic constructions in the multiply connected setting
concern doubly connected V-states. In particular, de la Hoz, Hmidi, Mateu
and Verdera \cite{HHMV16} constructed branches of nonannular,
$\mathbf m$-fold symmetric doubly connected V-states bifurcating from
annuli under an appropriate nondegeneracy condition. The corresponding
degenerate bifurcation regimes were investigated in
\cite{HM16,WXZ22}.

The theory of rotating vortex structures has since developed in several
directions. We mention, among others, the construction of rotating patches
bifurcating from Kirchhoff ellipses \cite{CCGS16,HmidiMateu2016},
rotating patches in the unit disk \cite{deLaHozHassainiaHmidiMateu2016},
nonradial compactly supported smooth rotating solutions of the Euler
equations \cite{CastroCordobaGomezSerrano2019}, and rotating vortices with
nonuniform vorticity distributions
\cite{GarciaHmidiSoler2020}. Rigidity and radial-symmetry properties of
stationary and uniformly rotating solutions have also been studied in
\cite{Hmidi2015,GomezSerranoParkShiYao2021}.

\subsection{Point vortices and desingularization}

Vortex-patch dynamics are closely connected with the point-vortex model,
which represents the vorticity by a finite collection of Dirac masses:
$$
    \omega(t,x)
    =
    \sum_{j=1}^N
    \Gamma_j\delta_{z_j(t)}(x).
$$
Here $\Gamma_j\in\mathbb R$ is the circulation of the $j$-th vortex and
$z_j(t)\in\mathbb R^2$ is its position. Since the vortex positions
remain distinct, they evolve according to the Kirchhoff--Routh system
$$
    \dot z_j(t)
    =
    \sum_{\substack{k=1\\k\neq j}}^N
    \Gamma_k
    K\bigl(z_j(t)-z_k(t)\bigr),
    \qquad
    j\in\{1,\ldots,N\},
$$
where $K$ is the Biot-Savart kernel defined in \eqref{Biot-Savart_Law}. The point-vortex system is well defined as long as the vortex positions remain
pairwise distinct, that is,
$$
    z_j(t)\neq z_k(t)
    \qquad\text{for }j\neq k.
$$
This separation property need not be preserved by the evolution. Indeed,
finite-time collapse may occur for systems of three or more vortices, as
already observed in Gröbli's classical analysis of the three-vortex problem
\cite{Grobli1877}; the corresponding collapsing configurations involve
circulations of different signs. Nevertheless, such singular trajectories
are exceptional. More precisely, under the non-neutral cluster condition
$$
    \sum_{j\in J}\Gamma_j\neq0
    \qquad
    \text{for every nonempty subset }
    J\subset\{1,\ldots,N\},
$$
Marchioro and Pulvirenti proved that the point-vortex system admits a global
solution for Lebesgue-almost every initial configuration
\cite[Chapter~4]{MarchioroPulvirenti1994}. Equivalently, the set of initial
configurations leading to a finite-time collision has Lebesgue measure zero. 

Of particular relevance to the present work are regular vortex polygons.
Consider $\mathbf m$ identical point vortices of circulation $\Gamma$ placed at
\begin{equation}\label{polyg}
      z_j
    =
    d\,
    e^{i\frac{\pi\sigma}{\mathbf m}}
    e^{i\frac{2\pi(j-1)}{\mathbf m}},
    \qquad
    j\in\{1,\ldots,\mathbf m\},
\end{equation}
where $d>0$ and $\sigma\in\mathbb R$. This configuration is a relative
equilibrium and rotates with angular velocity
$$
    \Omega_{\mathrm{pv}}
    =
    \Gamma\frac{\mathbf m-1}{4\pi d^2};
$$
see, for instance, \cite{G21,T85}. The phase $\sigma$ determines the global
orientation of the polygon but does not affect its angular velocity.

A central problem in vortex dynamics is to replace singular point vortices
by smooth or patch-type vorticity distributions while preserving the
underlying motion. This procedure is usually referred to as
\emph{desingularization}. In the context of relative equilibria, one seeks
families of concentrated vortex patches converging to a prescribed
point-vortex configuration while rotating with an asymptotically compatible
angular velocity.

Early numerical investigations of finite-area vortex equilibria for the
planar Euler equations go back to Deem and Zabusky
\cite{DZ78}. Steadily translating symmetric vortex pairs were
subsequently computed in detail by Pierrehumbert
\cite{Pierrehumbert1980}, while Saffman and Szeto
\cite{SaffmanSzeto1980} studied equilibrium shapes of equal co-rotating
vortices. Numerical investigations of more general, and in particular
asymmetric, vortex interactions were later developed by Dritschel
\cite{Dritschel1995}.

Rigorous constructions were first obtained through variational methods.
Turkington \cite{T85} constructed co-rotating steady vortex
patches with $N$-fold symmetry, which may be viewed as desingularizations
of regular point-vortex polygons, described in \eqref{polyg}. Keady \cite{Keady1985} established
existence and asymptotic estimates for symmetric translating vortex
streets, while Wan \cite{Wan1988} studied the existence and stability of
desingularizations associated with more general rotating point-vortex
systems. Although these variational methods are powerful, they generally
provide less detailed information about the geometry of the patch
boundaries and the local uniqueness of the resulting solutions.

A more direct approach was introduced by Hmidi and Mateu
\cite{HmidiMateu2017}. By desingularizing the contour-dynamics equations
and applying the implicit function theorem, they constructed local families
of co-rotating and counter-rotating vortex pairs bifurcating from the
corresponding point-vortex configurations. This strategy was subsequently
adapted to unequal and asymmetric vortex pairs by Hassainia and Hmidi
\cite{HassainiaHmidi2021}, to the von K\'arm\'an vortex street by
Garc\'ia \cite{GarciaKarman2020}, and to the regular Thomson polygon by
Garc\'ia \cite{G21}. Hassainia and Wheeler
\cite{HassainiaWheeler2022} extended the contour-dynamics
desingularization method to arbitrary steady configurations of finitely many
point vortices satisfying a natural nondegeneracy condition. Their framework
covers uniformly rotating, uniformly translating, and stationary
configurations and, after incorporating the relevant symmetries, applies to
regular Thomson polygons \eqref{polyg}, body-centered polygons, and nested regular
polygons.

In a complementary direction, D\'avila, del Pino, Musso and Wei
\cite{DavilaDelPinoMussoWei2020} developed a gluing method for constructing
smooth Euler solutions whose vorticity remains concentrated near prescribed
collision-free point-vortex trajectories. G\'omez-Serrano, Park and Shi
\cite{GomezSerranoParkShi2025} used a Nash--Moser scheme to construct
nontrivial, compactly supported stationary patch configurations with
sign-changing vorticity and finite kinetic energy.

\subsection{Main result}
The purpose of this paper is to construct uniformly rotating patch-solutions with multiple holes for the two-dimensional incompressible Euler equations in vorticity form \eqref{ESVF}.  More precisely, we establish the existence of piecewise-constant vortex configurations consisting of one outer component
and $\mathbf m\geq2$ concentrated inner components. For
$\varepsilon>0$, we consider vorticities of the form
\begin{equation}
\label{intro_initial_vorticity}
    \omega_{0,\varepsilon}^{\sigma}(x)
    =
    \mathbf 1_{\mathcal D_1^{\varepsilon}}(x)
    -
    \frac{1}{\pi\varepsilon^2}
    \sum_{j=1}^{\mathbf m}
    \mathbf 1_{\mathcal D_{2,j}^{\varepsilon,\sigma}}(x).
\end{equation}
The inner components are defined from a reference domain
$\mathcal D_2^{\varepsilon,\sigma}$ by
\begin{equation}
\label{intro_inner_components}
    \mathcal D_{2,j}^{\varepsilon, \sigma}
    =
    e^{i\frac{\pi\sigma}{\mathbf m}}
    e^{i\frac{2\pi(j-1)}{\mathbf m}}
    \left(
        \varepsilon\mathcal D_2^{\varepsilon,\sigma}
        +
        \varepsilon^\alpha\mathtt e_1
    \right),
    \qquad
    \mathtt e_1:=(1,0).
\end{equation}
They are pairwise disjoint, lie strictly inside
$\mathcal D_1^{\varepsilon}$, and are arranged with
$\mathbf m$-fold rotational symmetry. We shall refer to these inner
components as holes, in the sense that they determine inner interfaces of
the configuration. Notice, however, that the vorticity in
\eqref{intro_initial_vorticity} is not zero on them; the construction is a
sign-changing, two-level vortex-patch configuration. We restrict to the two phase choices
$$
    \sigma=0
    \qquad\text{and}\qquad
    \sigma=1.
$$
These are, up to cyclic relabeling, the two orientations compatible with
reflection symmetry about the real axis. The first corresponds to the
roots-of-unity configuration, whereas the second is obtained by rotating
the polygon through the angle $\pi/\mathbf m$.

The construction is performed in a desingularization regime. Each inner
component has diameter of order $\varepsilon$, whereas its center lies at
distance $\varepsilon^\alpha$ from the origin. Since $\alpha<1$, the inner
components are much smaller than the distance separating their centers.
As $\varepsilon\to0^+$, the outer component converges to the unit disk
$\mathbb D$, while all the inner components shrink and collapse toward the
origin. In the sense of measures,
\begin{equation}
\label{limiting_configuration}
    \omega_{0,\varepsilon}^\sigma
    \rightharpoonup
    \omega_{0,0}
    :=
    \mathbf 1_{\mathbb D}
    -
    \mathbf m\delta_0.
\end{equation}

Thus, the limiting measure consists of a Rankine vortex and a point vortex
of total circulation $-\mathbf m$ located at its center. The leading
angular velocity, however, is inherited from the regular
$\mathbf m$-polygon at the intermediate scale $\varepsilon^\alpha$.
Indeed, the collapsed measure in \eqref{limiting_configuration} is radially
symmetric and, by itself, does not select an angular velocity.

To the best of our knowledge, the present work gives the first construction
of uniformly rotating configurations of the form
\eqref{intro_initial_vorticity} with more than one concentrated inner
component. In contrast with the classical doubly connected constructions,
all the inner components collapse toward the same point in the singular
limit.

Our main result reads as follows.

\begin{theo}
\label{main_theo}
Let $\mathbf m\geq2$, let $\nu\in(0,1)$, and assume that
$$
    \frac{1}{\mathbf m+2}
    <
    \alpha
    <
    \frac12.
$$
For every $\sigma\in\{0,1\}$, there exists $\varepsilon_0>0$ such that, for
each $\varepsilon\in(0,\varepsilon_0)$, there exist a real number
$\Lambda_\varepsilon$ and bounded, simply connected domains $
    \mathcal D_1^{\varepsilon}$, $
    \mathcal D_2^{\varepsilon,\sigma},
$
with boundaries of class $C^{1,\nu}$, having the following properties:

\begin{enumerate}
    \item the outer domain $\mathcal D_1^{\varepsilon}$ is
    $\mathbf m$-fold rotationally symmetric and symmetric with respect to
    the real axis;
\item the domains $\mathcal D_{2,j}^{\varepsilon,\sigma}$ defined by
    \eqref{intro_inner_components} 
are pairwise disjoint and satisfy $
        \overline{\mathcal D_{2,j}^{\varepsilon,\sigma}}
        \subset
        \mathcal D_1^{\varepsilon};$

    \item as $\varepsilon\to0^+$,
the two generating domains converge to the unit disk in the
    $C^{1,\nu}$ topology and $\Lambda_\varepsilon\longrightarrow0
    $;

    \item the vorticity $\omega_{0,\varepsilon}$ defined by
    \eqref{intro_initial_vorticity} generates the uniformly rotating weak
    solution
    $$
        \omega_\varepsilon(t,x)
        =
        \omega_{0,\varepsilon}^{\sigma}
        \left(
            e^{-i\Omega_\varepsilon t}x
        \right),
    $$
    of \eqref{ESVF}, with angular velocity
    $$
        \Omega_\varepsilon
        =
        \frac{1-\mathbf m}{
            4\pi\varepsilon^{2\alpha}
        }
        +
        \frac{
            \Lambda_\varepsilon
        }{
            \varepsilon^\alpha
        }.
    $$
\end{enumerate}
\end{theo}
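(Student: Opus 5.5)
We will follow the contour-dynamics desingularization strategy of Hmidi--Mateu and García, adapted to the present configuration with one outer interface and $\mathbf m$ inner ones. In the rotating frame, the rotating condition amounts to requiring that the relative stream function $\psi-\frac{\Omega}{2}|x|^2$ be constant on each connected component of the boundary; equivalently, the normal component of $u-\Omega x^\perp$ must vanish on every interface. We parametrize the outer boundary as $z_1(\theta)=(1+\varepsilon^{\beta}f_1(\theta))e^{i\theta}$ (the exponent $\beta>0$ is to be chosen, possibly depending on $\alpha$ and $\mathbf m$) and the reference inner boundary as $z_2(\theta)=(1+\varepsilon f_2(\theta))e^{i\theta}$. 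The inner components are then $e^{i\pi\sigma/\mathbf m}e^{2\pi i(j-1)/\mathbf m}(\varepsilon z_2+\varepsilon^\alpha)$. Imposing $\mathbf m$-fold symmetry on $f_1$ and reflection symmetry on both $f_1$ and $f_2$ (even cosine expansions, with $f_1$ having only frequencies in $\mathbf m\mathbb N$) reduces the $\mathbf m+1$ boundary equations to just two: one on $\partial\mathcal D_1^\varepsilon$ and one on $\partial\mathcal D_{2,1}^{\varepsilon,\sigma}$. Under this symmetry the remaining inner equations follow by rotation.

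\textbf{Step 1: the two functionals.} We write the two functionals $F_1(\varepsilon,\Lambda,f_1,f_2)$ and $F_2(\varepsilon,\Lambda,f_1,f_2)$, each defined as the tangential derivative of the relative stream function evaluated on the corresponding interface, with $\Omega=\frac{1-\mathbf m}{4\pi\varepsilon^{2\alpha}}+\frac{\Lambda}{\varepsilon^\alpha}$.

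For $F_2$, after rescaling $x=\varepsilon^\alpha+\varepsilon w$, the self-induced term of the $j=1$ inner patch has strength $\frac{1}{\pi\varepsilon^2}\cdot\varepsilon^2=\frac1\pi$ and produces the standard Rankine-type operator of order $\varepsilon^{-1}$. The other $\mathbf m-1$ inner patches, which sit at distance $\sim\varepsilon^\alpha$, together with the rotation $\Omega x^\perp$, contribute terms of order $\varepsilon^{-\alpha}$. Their leading parts cancel exactly because $\Omega_{\mathrm{pv}}$ corresponds to $\Gamma=-1$ and $d=\varepsilon^\alpha$. The outer patch contributes $\frac12x^\perp$-type terms that are smaller.

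After multiplying by $\varepsilon$, the equation takes the form $F_2=\frac1{2}\mathrm{Im}\bigl(\ldots\bigr)+\varepsilon^{1-\alpha}\Lambda\sin\theta+\ldots$. The $\sin\theta$ mode corresponds to translation and is fixed by $\Lambda$. This is the usual mechanism in García's Thomson-polygon construction: the strain from the other vortices enters at order $\varepsilon^{2-2\alpha}$, and since $\alpha<\frac12$ the remainders are $o(1)$.

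For $F_1$, near the unit circle the inner cluster acts as a multipole of total circulation $-\mathbf m$. Its first nonradial moment is of order $\varepsilon^{\mathbf m\alpha}$ at frequency $\mathbf m$, while the rotation term $\Omega x^\perp$ is huge, of order $\varepsilon^{-2\alpha}$. Linearizing, the outer operator applied to $f_1$ therefore carries a factor $\Omega$. Dividing the equation by $\Omega$ and taking $\beta=\mathbf m\alpha+2\alpha$, we need the forcing $\varepsilon^{(\mathbf m+2)\alpha}$ normalized against $\varepsilon^{\beta}$, and the nonlinear remainders must be $o(1)$. The condition $\alpha>\frac1{\mathbf m+2}$ should be exactly what makes $(\mathbf m+2)\alpha>1$. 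This is needed so that the outer deformation is small compared with the inner scale $\varepsilon$ in the coupling terms, and so that the dependence on $f_1$ in $F_2$ is negligible.

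\textbf{Step 2: extension to $\varepsilon\le0$ and the linearization.} We show that $F=(F_1,F_2)$ extends continuously to $\varepsilon\in(-\varepsilon_1,\varepsilon_1)$, using $|\varepsilon|$-powers, as a map from $\mathbb R\times\mathbb R\times X^{1+\nu}_1\times X^{1+\nu}_2$ into $Y^{\nu}_1\times Y^\nu_2$. Here the $X$ spaces are symmetric $C^{1,\nu}$ cosine series with zero mean, and the $Y$ spaces are the corresponding sine series. We also show that $F$ is $C^1$ in $(\Lambda,f_1,f_2)$.

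The main obstacle, in our view, is this step: obtaining uniform $C^{1,\nu}\to C^{\nu}$ estimates for the singular-integral remainders, whose prefactors are negative powers of $\varepsilon$, and verifying that every cross interaction (outer–inner, inner–inner for $j\neq1$) is $o(1)$ with continuous Fréchet derivative at $\varepsilon=0$. This is where we would spend the bulk of the work, splitting each kernel into its Taylor expansion in the small parameter plus a remainder that is controlled by standard Hölder potential estimates.

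At $\varepsilon=0$ we would show that $F(0,\Lambda^*,0,0)=0$ for a suitable $\Lambda^*$, with $\Lambda^*=0$ after the normalization. The linearized operator is then diagonal:
\[
\partial_{f_1}F_1(0)\colon \sum a_n\cos(n\mathbf m\theta)\mapsto \sum c_{n}\,a_n\sin(n\mathbf m\theta),
\qquad
\partial_{f_2}F_2(0)\colon \sum b_n\cos(n\theta)\mapsto \sum \tfrac{n-1}{2}\, b_n\sin(n\theta).
\]
In the first operator $c_n\propto n\mathbf m$, since it is dominated by the rotation. The kernel of the second operator at $n=1$ (the translation mode) is compensated by $\partial_\Lambda F_2\propto\sin\theta$.

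Consequently, $D_{(\Lambda,f_1,f_2)}F(0,0,0,0)$ is an isomorphism onto $Y_1\times Y_2$, provided we remove the $n=1$ mode from $X_2$, which we do by fixing the center through the parametrization. The implicit function theorem then gives $(\Lambda_\varepsilon,f_1^\varepsilon,f_2^\varepsilon)$ continuous in $\varepsilon$ and tending to $0$. This yields properties (1)–(4). Disjointness and inclusion in (2) follow from $\varepsilon\ll\varepsilon^\alpha\ll1$, and the weak-solution property in (4) follows from the standard equivalence between the boundary equations and rotating patch solutions.
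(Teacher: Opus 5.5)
Your overall architecture matches the paper's. The matching pieces are:
\begin{itemize}
\item reduction to one outer and one inner equation;
\item the ansatz $\Omega=\Omega_0\varepsilon^{-2\alpha}+\Lambda\varepsilon^{-\alpha}$ with $\Omega_0=\frac{1-\mathbf m}{4\pi}$, which cancels the $\varepsilon^{-\alpha}$ polygon term;
\item the strain remainder forcing $\alpha<\frac12$;
\item the diagonal linearization $\bigl(\Omega_0h_1',\,-\lambda\sin\vartheta+\frac1{2\pi}(h_2'+\mathcal H[h_2])\bigr)$, with the $\cos\vartheta$ mode removed from $X_2$;
\item an implicit function theorem that is only continuous in $\varepsilon$.
\end{itemize}

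\textbf{The step that fails is the base point.} With your choice $\beta=(\mathbf m+2)\alpha$, dividing the outer equation by $\varepsilon^{\beta-2\alpha}=\varepsilon^{\mathbf m\alpha}$ leaves the polygon forcing at order one. At $\varepsilon=0$ you get $F_1(0,\Lambda,f)=\Omega_0f_1'+c\,\sin(\mathbf m\vartheta-\pi\sigma)$ with $c\neq0$ ($c=\frac{\mathbf m}{2\pi}$ if you normalize as the paper does). Hence $F(0,\Lambda^*,0,0)\neq0$ for every $\Lambda^*$, contrary to what you assert.

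Your explanation of the lower bound on $\alpha$ does not hold up either. In your scaling the inequality $(\mathbf m+2)\alpha>1$ is never used. Nothing requires the outer deformation to be $\ll\varepsilon$: the $f_1$-dependence of $F_2$ is $O(\varepsilon^{\beta})$ for every $\beta>0$. This comes from a layer of thickness $\varepsilon^\beta$, times a factor $\varepsilon$ from $\partial_\vartheta$ of the inner parametrization, divided by $\varepsilon$.

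\textbf{How the paper avoids this.} It puts the outer deformation at the inner scale, $w_1=\sqrt{1+2\varepsilon f_1}$ (your $\beta=1$), and normalizes $\mathcal F_1^\sigma$ by $\varepsilon^{-(1-2\alpha)}$. The exact computation of Proposition~\ref{F1_at_0} then gives
\[
\mathcal F_1^\sigma(\varepsilon,\Lambda,0)=\frac{\mathbf m}{2\pi}\,\varepsilon^{(\mathbf m+2)\alpha-1}\,\frac{\sin(\mathbf m\vartheta-\pi\sigma)}{1-2\varepsilon^{\mathbf m\alpha}\cos(\mathbf m\vartheta-\pi\sigma)+\varepsilon^{2\mathbf m\alpha}},
\]
which tends to zero precisely when $\alpha>\frac1{\mathbf m+2}$. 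That is the only place the lower bound enters. It is what makes the circular configuration $(0,0,0)$ a zero of the extended operator.

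\textbf{Keeping your exponent instead.} If you want to keep $\beta=(\mathbf m+2)\alpha$, you must linearize at $(0,0,f_1^*,0)$ with $\Omega_0f_1^{*\prime}=-c\sin(\mathbf m\vartheta-\pi\sigma)$. Then $f_1^*\propto\cos(\mathbf m\vartheta-\pi\sigma)\in\mathcal X_{1,\mathbf m}$ for $\sigma\in\{0,1\}$. The linearization is unchanged, because at $\varepsilon=0$ the first component is affine in $f_1$ and the second does not depend on $f_1$. However, the inner multipole corrections must then be controlled against the normalization $\varepsilon^{-\mathbf m\alpha}$. That needs the cancellation over the polygon, not just the crude $O(\varepsilon)$ point-mass error bound the paper uses.

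\textbf{A smaller inconsistency.} Your heuristic term $\varepsilon^{1-\alpha}\Lambda\sin\theta$ contradicts your Step 2. For $\partial_\Lambda F_2$ to produce the $\sin\vartheta$ mode at $\varepsilon=0$, $\Lambda$ must appear at the same order as the self-interaction. In the paper, after dividing the inner equation by $\varepsilon$, the term $\Lambda\varepsilon^{-\alpha}\varepsilon^{\alpha+1}(z_2')_1$ becomes $\Lambda(z_2')_1=-\Lambda\sin\vartheta+O(\varepsilon)$. The size $\varepsilon^{1-\alpha}$ (before division) belongs to the $\Omega_0$ term, not the $\Lambda$ term.
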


The numerical configurations displayed in Figures \ref{k3} and \ref{k32} 
illustrate the geometry of the solutions for $\mathbf m=3$ with $\sigma=1$ and $\sigma=0$, respectively. Figure  \ref{k6} illustrate the geometry of the solutions for
$\mathbf m=6$.

A configuration geometrically closer to the one considered in Theorem \ref{main_theo} was studied by Velasco Fuentes \cite{VelascoFuentes2013}, who numerically evolved a Rankine vortex containing two symmetric vorticity holes. In an appropriate parameter regime, the holes remain coherent and rotate around one another, although their shapes and mutual distance undergo small oscillations, so the resulting motion is not an exact relative equilibrium. 

These numerical studies provide evidence for the existence and persistence of rotating  vortex configurations with holes. Nevertheless, they do not address the singular regime considered in the present paper, in which $\mathbf m$ small inner components are contained in a single outer patch,
are centered at distance $\varepsilon^\alpha$ from the origin, and collapse
simultaneously toward the center as $\varepsilon\to0^+$.

\begin{rem}[On the admissible range of $\alpha$]
\label{rem:bounds_alpha}
The restrictions on $\alpha$ are analytical and do not arise from the
geometric separation of the inner components. Indeed, their diameters are of
order $\varepsilon$, whereas the distances between their centers are of order
$\varepsilon^\alpha$. Hence
$$
    \frac{\varepsilon}{\varepsilon^\alpha}
    =
    \varepsilon^{1-\alpha}
    \longrightarrow0
    \qquad\text{as }\varepsilon\to0^+
$$
whenever $\alpha<1$. Thus, throughout the range considered in Theorem
\ref{main_theo}, the inner components remain mutually separated for
sufficiently small $\varepsilon$.

The lower and upper bounds on $\alpha$ arise instead from the limiting
behavior of the rescaled boundary equations at the circular configuration. For more details, see Section \ref{section_trivial_solutions}.
\end{rem}

\begin{rem}[The exceptional pentagonal case]
\label{rem:pentagonal_case}
When $\mathbf m=5$, the admissible upper bound on $\alpha$ can be improved
from $\frac12$ to $\frac23$. This exceptional range is due to an exact
roots-of-unity cancellation in the second component of the nonlinear
equation at the circular configuration: for a regular pentagon, the
coefficient of the second Fourier harmonic in the inner--inner interaction
vanishes. Consequently, the first nonzero remainder is of order
$$
    \varepsilon^{2-3\alpha},
$$
rather than $\varepsilon^{1-2\alpha}$, and therefore tends to zero whenever
$\alpha<\frac23$. This cancellation depends only on the relative positions
of the inner components and is thus independent of the phase $\sigma$. More details can be found in Proposition \ref{F2_at_0}.
\end{rem}

\begin{rem}[Possible continuation toward genuine holes]
Since the inner components are pairwise disjoint and contained in the outer
domain, the vorticity constructed in Theorem \ref{main_theo} takes the values
$$
\omega_{0,\varepsilon}^{\sigma}(x)
=
\begin{cases}
1,
&
x\in
\mathcal D_1^{\varepsilon}
\setminus
\displaystyle\bigcup_{j=1}^{\mathbf m}
\mathcal D_{2,j}^{\varepsilon,\sigma},
\\[3mm]
\displaystyle
1-\frac{1}{\pi\varepsilon^2},
&
x\in
\displaystyle\bigcup_{j=1}^{\mathbf m}
\mathcal D_{2,j}^{\varepsilon,\sigma},
\\[3mm]
0,
&
x\notin\mathcal D_1^{\varepsilon}.
\end{cases}
$$
Thus, for the small values of $\varepsilon$ considered in the local
construction, the inner components are regions of negative vorticity rather
than genuine zero-vorticity holes.

Nevertheless, this observation suggests a possible connection with
multiply connected V-states. Indeed, at the distinguished value $
    \varepsilon_{\mathrm h}
    :=
    \frac{1}{\sqrt{\pi}},
$
one has
$
    1-\frac{1}{\pi\varepsilon_{\mathrm h}^2}=0,
$
and hence
$$
\begin{aligned}
\omega_{0,\varepsilon_{\mathrm h}}^\sigma
&=
\mathbf 1_{\mathcal D_1^{\varepsilon_{\mathrm h},\sigma}}
-
\sum_{j=1}^{\mathbf m}
\mathbf 1_{\mathcal D_{2,j}^{\varepsilon_{\mathrm h},\sigma}}
\\
&=
\mathbf 1_{
\mathcal D_1^{\varepsilon_{\mathrm h},\sigma}
\setminus
\displaystyle\bigcup_{j=1}^{\mathbf m}
\mathcal D_{2,j}^{\varepsilon_{\mathrm h},\sigma}
}.
\end{aligned}
$$
Consequently, if the local solution branch furnished by Theorem
\ref{main_theo} could be continued globally in the solution set up to
$\varepsilon=\varepsilon_{\mathrm h}$, in the spirit of \cite{GarciaHaziot2023}, while preserving the regularity and
admissibility of the configuration  (in particular, without collision of the
inner components, contact with the outer boundary, or degeneration of the
interfaces), then the continuation would produce a uniformly rotating vortex
patch with $\mathbf m$ genuine holes. Establishing such a global continuation
is an interesting problem that lies beyond the scope of the present work.
\end{rem}
\begin{rem}[Fast rotation and degeneration of the branch]
\label{rem:fast_rotation_degenerate_limit}
The angular velocity of the solutions constructed in Theorem
\ref{main_theo} satisfies 
$$
    \Omega_\varepsilon
    \sim
    \frac{1-\mathbf m}{
        4\pi\varepsilon^{2\alpha}
    },
    \qquad
    \varepsilon\to0^+,
$$
and therefore
$$
    \left|
        \Omega_\varepsilon
    \right|
    \longrightarrow+\infty.
$$
Thus, the nonradial configurations constructed above rotate increasingly
fast as the inner components shrink and collapse toward the origin.
This behavior is not inconsistent with the fact that the limiting
configuration
$$
    \omega_{0,0}
    =
    \mathbf 1_{\mathbb D}
    -
    \mathbf m\delta_0,
$$
is stationary. Indeed, this limiting vorticity is radially symmetric and
therefore invariant under every rotation:
$$
    \omega_{0,0}
    \left(
        e^{-i\theta}x
    \right)
    =
    \omega_{0,0}(x),
    \qquad
    \theta\in\mathbb R.
$$
Hence its angular velocity is not geometrically observable: the same
limiting configuration may formally be regarded as rotating with any
angular velocity, while remaining stationary as a vorticity distribution.

The divergence of $\Omega_\varepsilon$ reflects the degeneracy of
the nontrivial solution branch as $\varepsilon\to0^+$. Before reaching the
limit, the polygon formed by the inner components possesses a well-defined
orientation and rotates on the increasingly short time scale
$
    \left|
        \Omega_\varepsilon
    \right|^{-1}
    \sim
    \varepsilon^{2\alpha}.
$
At $\varepsilon=0$, however, all the inner components have collapsed to the
origin and the outer component has become circular. The orientation of the
configuration is then lost, and the nontrivial rotating branch collapses
onto a rotationally invariant state. In this sense, the branch disappears
from the limiting geometric problem, even though its vorticities converge
to the stationary measure $\omega_{0,0}$.

A related degeneracy occurs in the construction of doubly connected
V-states bifurcating from the {renormalized} annulus
$$
    A_b
    :=
    \left\{
        z\in\mathbb C:
        b<|z|<1
    \right\},
    \qquad
    0<b<1,
$$
obtained in \cite{HHMV16}. For each admissible Fourier mode $\mathbf m$,
there are two bifurcation values
$$
    \Omega_{\mathbf m}^{\pm}(b)
   :=
    \frac{1-b^2}{4}
    \pm
    \frac{1}{2\mathbf m}
    \sqrt{
        \left(
            \frac{\mathbf m(1-b^2)}{2}-1
        \right)^2
        -
        b^{2\mathbf m}
    }.
$$
As $b\to0^+$, these values satisfy
$$
    \Omega_{\mathbf m}^{+}(b)
    \longrightarrow
    \frac{\mathbf m-1}{2\mathbf m},
    \qquad
    \Omega_{\mathbf m}^{-}(b)
    \longrightarrow
    \frac{1}{2\mathbf m}.
$$
The first limit is precisely the bifurcation value of the classical
simply connected $\mathbf m$-fold V-states, whereas the second one does not
produce a nontrivial branch in the limiting simply connected problem.

This provides a useful analogy with the present construction: a family of
nontrivial rotating configurations may degenerate into a rotationally
invariant limiting state, and the corresponding rotation rate need not
have a meaningful finite limit.
\end{rem}

\subsection{Outline of the proof}

The proof is based on a reduction of the contour-dynamics equations to a
nonlinear operator equation in the spirit of \cite{HmidiMateu2017}. We parametrize the outer boundary and one
reference inner boundary as radial graphs and exploit the
$\mathbf m$-fold symmetry to reduce the full system of
$\mathbf m+1$ boundary equations to two scalar equations. This yields a
phase-dependent nonlinear operator
$$
    \mathcal F^\sigma(\varepsilon,\Lambda,f)=0,
    \qquad
    f=(f_1,f_2),
$$
acting between symmetry-adapted H\"older spaces.
The angular velocity is written as
$$
    \Omega_{\varepsilon,\Lambda}
    =
    \frac{\Omega_0}{\varepsilon^{2\alpha}}
    +
    \frac{\Lambda}{\varepsilon^\alpha},
    \qquad
    \Omega_0
    =
    \frac{1-\mathbf m}{4\pi}.
$$
The value of $\Omega_0$ is determined by canceling the leading
singular term in the inner-boundary equation and agrees with the angular
velocity coefficient of the regular polygon of $\mathbf m$ point vortices.
We then show that $\mathcal F^\sigma$ admits a continuous extension to the
one-sided parameter value $\varepsilon=0$ and that
$$
    \mathcal F^\sigma(0,0,0)=0.
$$
Since fractional powers of $\varepsilon$ occur in the formulation, we treat
$\varepsilon$ as a continuous parameter belonging to the metric space
$[0,\varepsilon_0)$ and apply the parameter-dependent implicit function
theorem of Dontchev and Rockafellar
\cite[Theorem~5F.4, p.~305]{DontchevRockafellar2014}.
The linearization with respect to the unknowns $(\Lambda,f)$ is explicitly
given by
$$
\begin{aligned}
D_{(\Lambda,f)}
\mathcal F^\sigma(0,0,0)[\lambda,h]
=
\Bigg(
    &\Omega_0h_1',
    -\lambda\sin{(\cdot)}
    +
    \frac{1}{2\pi}
    \left(
        h_2'
        +
        \mathcal H[h_2]
    \right)
\Bigg),
\end{aligned}
$$
where $\mathcal H$ denotes the periodic Hilbert transform. The operator is
independent of $\sigma$ and is diagonal in Fourier variables. The scalar
parameter $\lambda$ supplies the first sine mode in the second component,
whereas the boundary perturbation $h_2$ controls all the higher modes. This
allows us to prove that the linearized operator is an isomorphism and to
conclude by the {above-mentioned version of the} implicit function theorem.

\subsection{Organization of the paper}

The remainder of the paper is organized as follows. In Section
\ref{section_reformulation}, we derive the rotating-boundary equations,
perform the symmetry reduction, and introduce the scaled nonlinear operator
$\mathcal F^\sigma$. In Section \ref{section_trivial_solutions}, we identify
the limiting configuration and determine the scaling coefficient
$\Omega_0$ and the admissible range of $\alpha$. In Section
\ref{sction4}, we introduce the symmetry-adapted Banach spaces,
establish the continuity and differentiability properties of
$\mathcal F^\sigma$. Finally, in Section
\ref{sec:linop} we compute the linearized operator at the limiting configuration, show that it is invertible, and complete the
proof of Theorem \ref{main_theo}. The appendix \ref{appendix_techinal_results} collects the potential theory identities and singular-integral estimates used throughout the paper.

\subsection{Acknowledgments}

V.B. is  supported by the FPI grant from the Spanish Government PREP2022-000038, and partially  by the AEI project PID2022-140494NA-I00 (Spain). C.G. has been supported by RYC2022-035967-I (MCIU/AEI/10.1303/501100011033 andFSE+), and partially by Grants PID2022-140494NA-I00 and PID2022-137228OB-I00 funded
by MCIN/AEI/10.\\13039/501100011033/FEDER, UE,  by the RED2024-153842-T, funded by MCIN/AEI /10.13039\\/501100011033, by Grant C-EXP-265-UGR23 funded by
Consejeria de Universidad, Investigacion e Innovacion \& ERDF/EU Andalusia Program, and
by Modeling Nature Research Unit, project QUAL21-011. Proyecto realizado con la Beca Leonardo a
Investigadores y Creadores Culturales 2024 de la Fundaci\'on BBVA. Z.H. is supported by Grant RYC2023-043706-I funded by MICIU/AEI/10.13039/501100011033 and by ESF+ and by grant PID2022-137228OB-I00 funded by
MCIN/AEI/10.13039/501100011033/FEDER.
J.C.C. and J.M. are supported by grant PID2024-155320NB-I00.

\begin{figure}[h]
 \centering
\begin{subfigure}{.3\textwidth}
  \centering
  \includegraphics[width=1\linewidth]{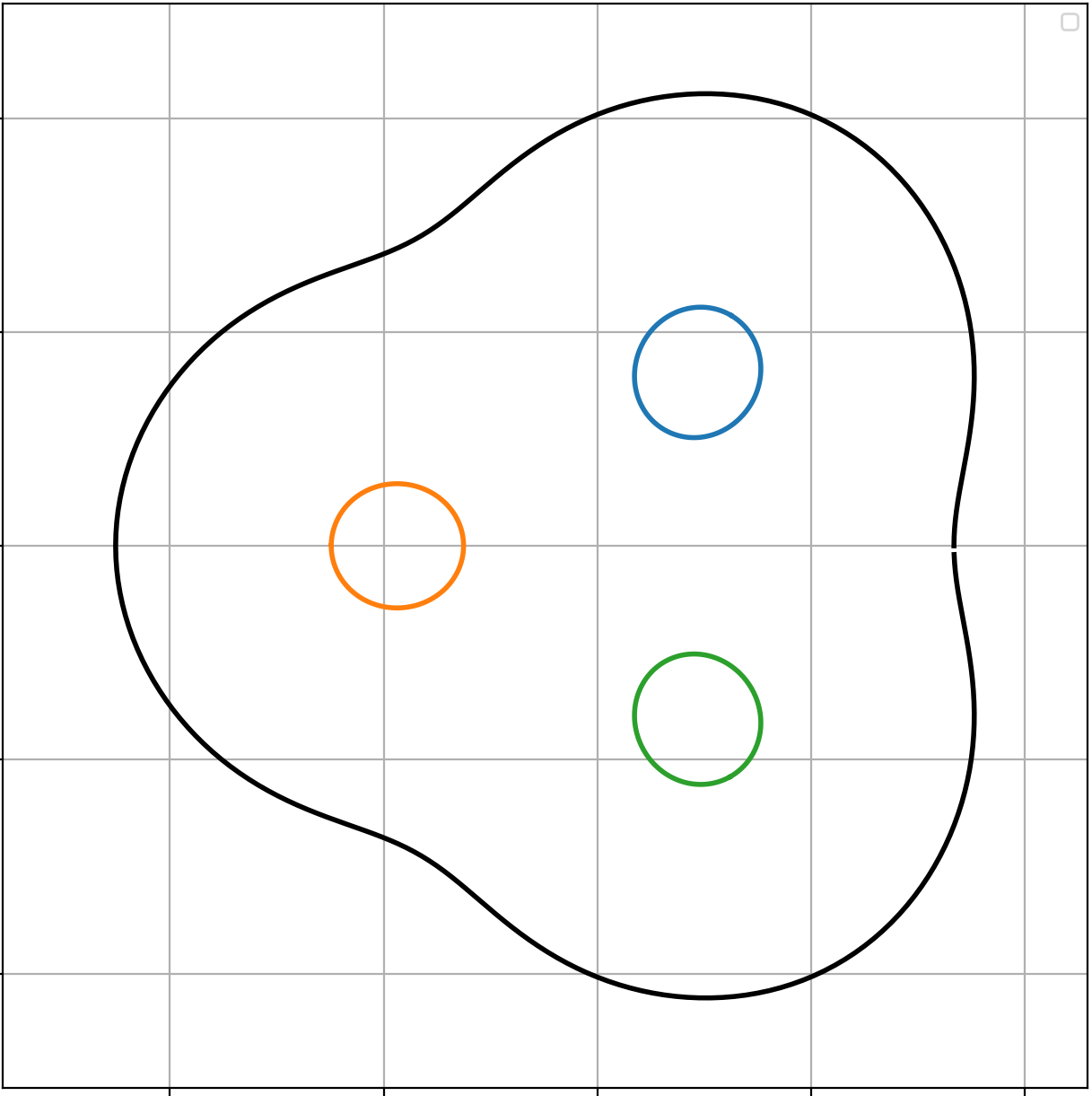}
\end{subfigure}%
\begin{subfigure}{.3\textwidth}
  \centering
  \includegraphics[width=1\linewidth]{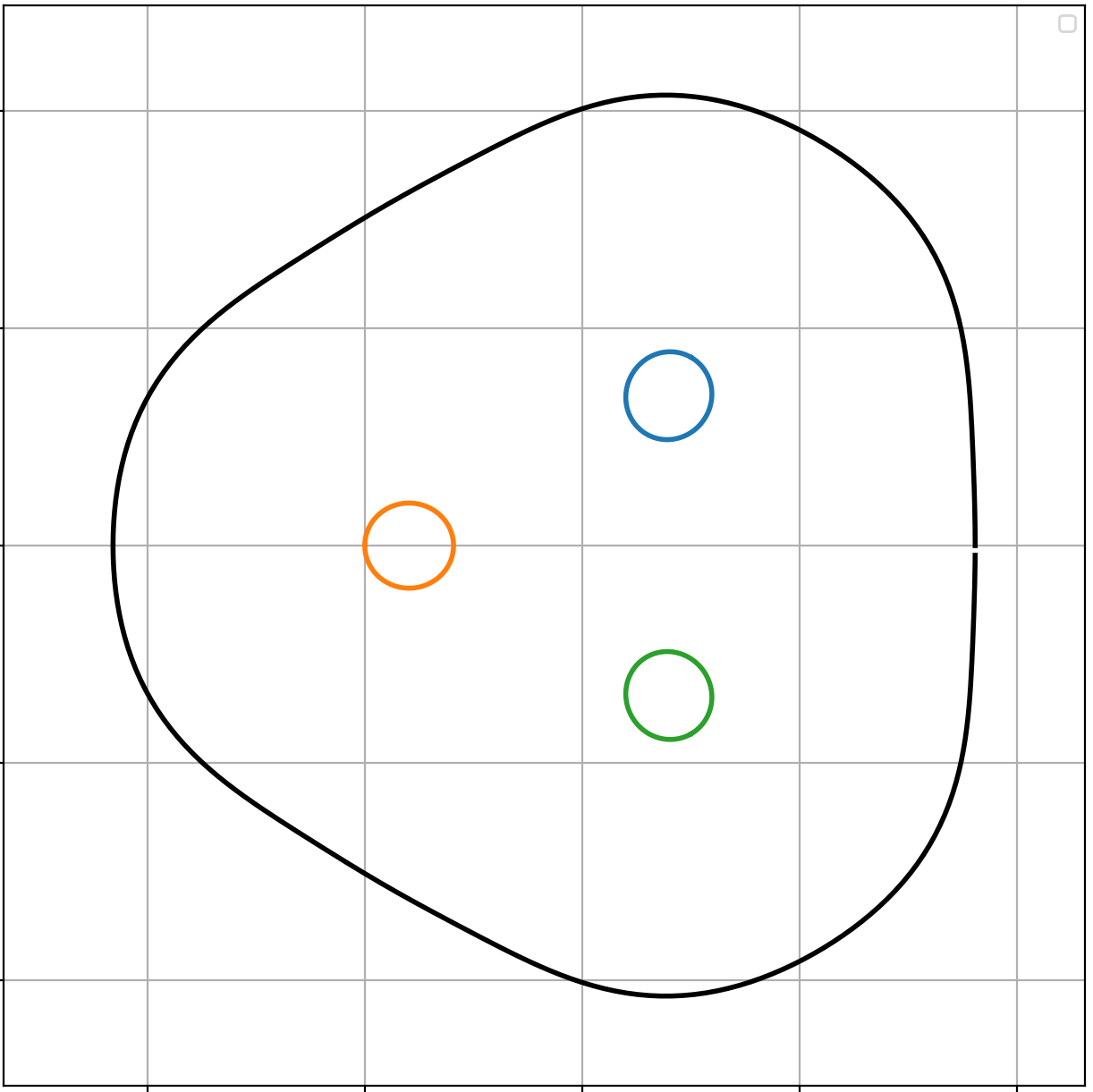}
\end{subfigure}
\begin{subfigure}{.3\textwidth}
  \centering
  \includegraphics[width=1\linewidth]{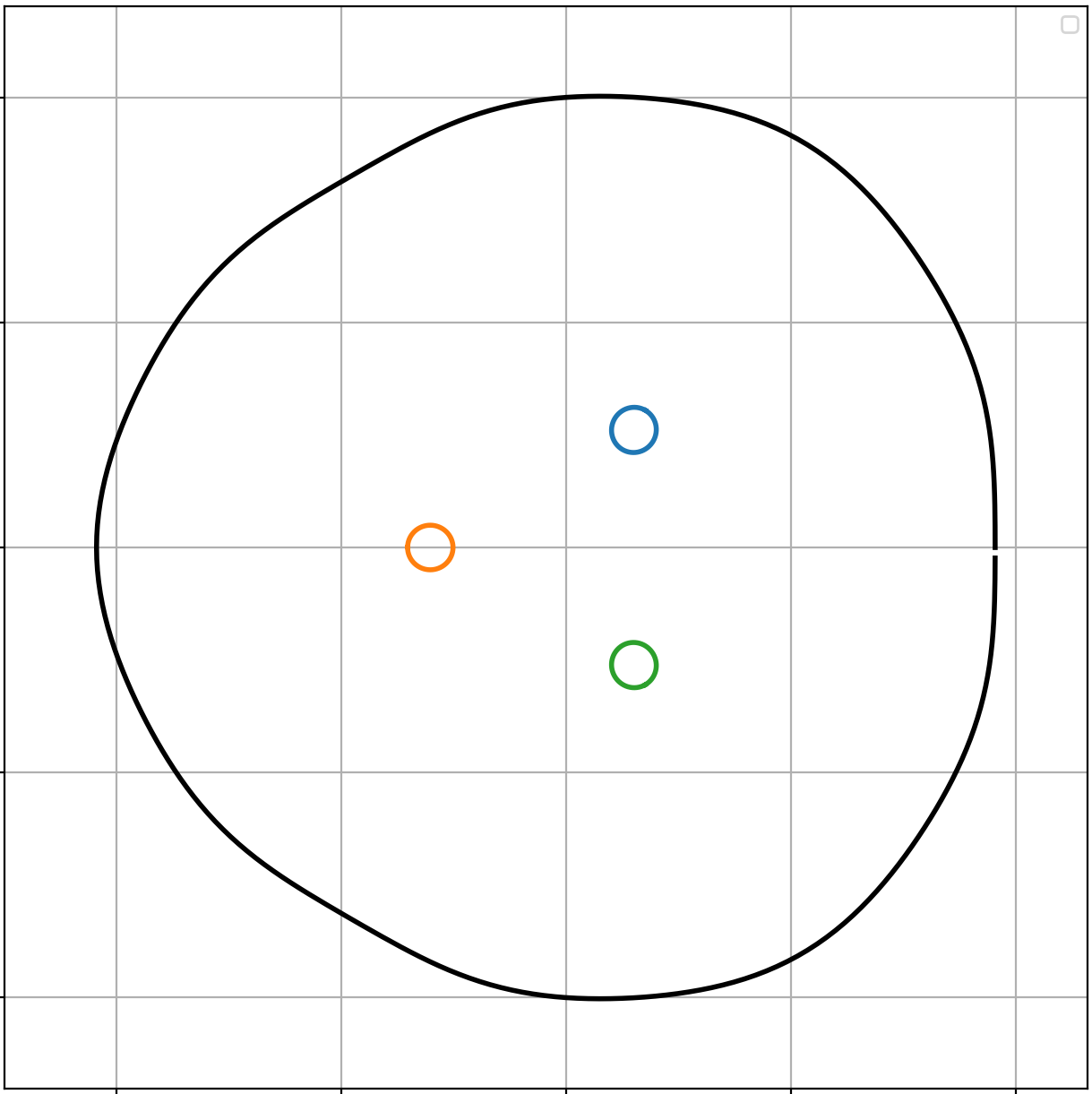}
\end{subfigure}
\caption{Numerically computed rotating configurations for $\sigma=1$ and
$\mathbf m=3$. From left to right,
$\varepsilon=0.05$, $0.10$, and $0.15$. }
\label{k3}
\end{figure}

\begin{figure}[h]
 \centering
\begin{subfigure}{.3\textwidth}
  \centering
  \includegraphics[width=1\linewidth]{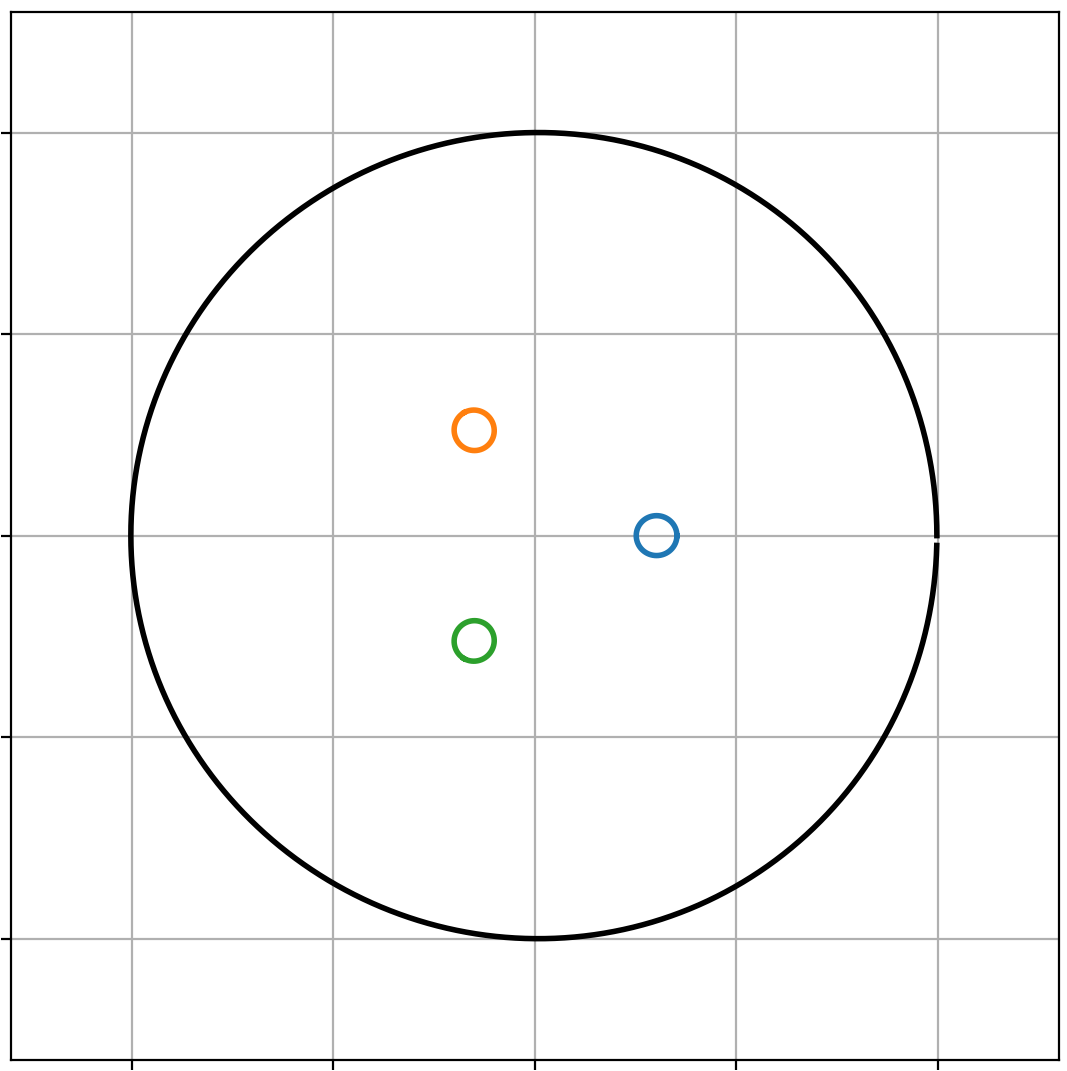}
\end{subfigure}%
\begin{subfigure}{.3\textwidth}
  \centering
  \includegraphics[width=1\linewidth]{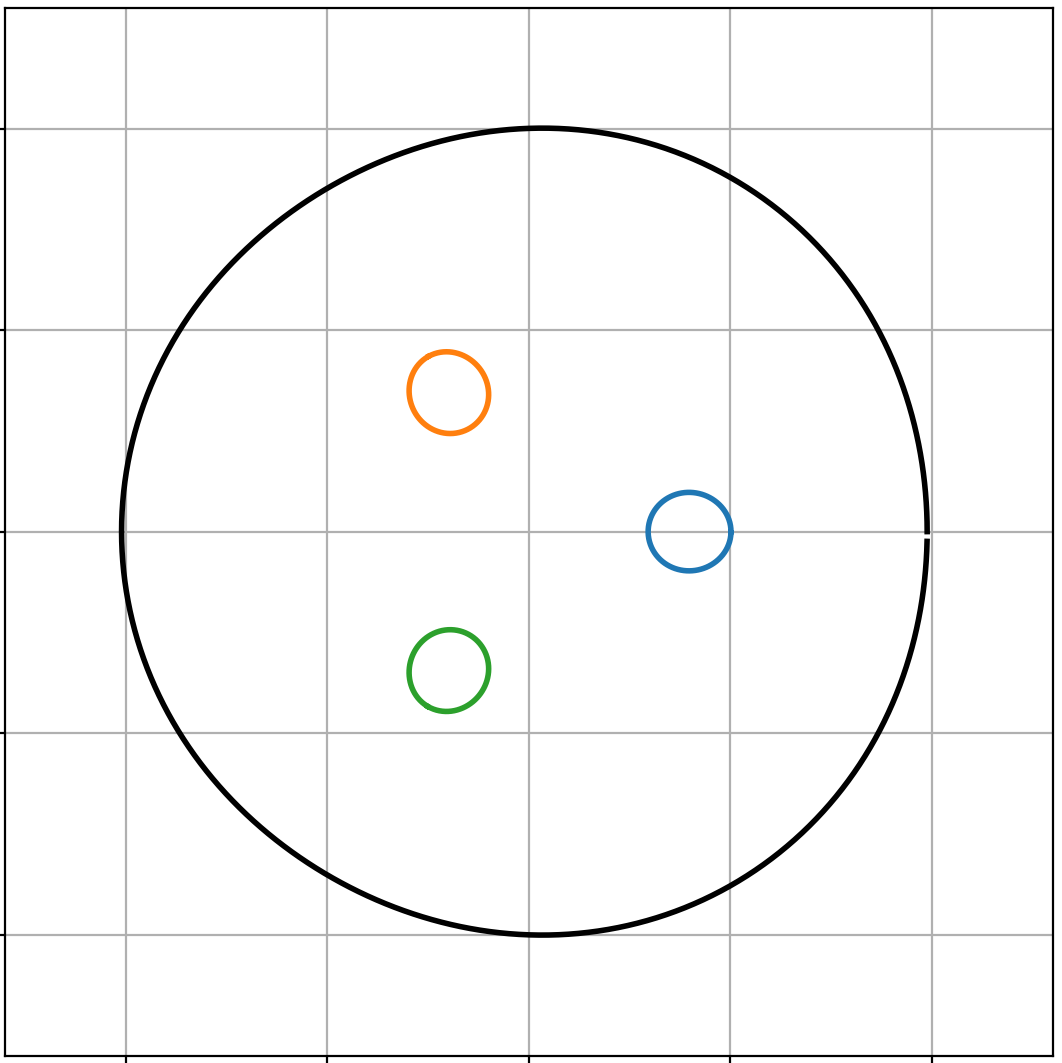}
\end{subfigure}
\begin{subfigure}{.3\textwidth}
  \centering
  \includegraphics[width=1\linewidth]{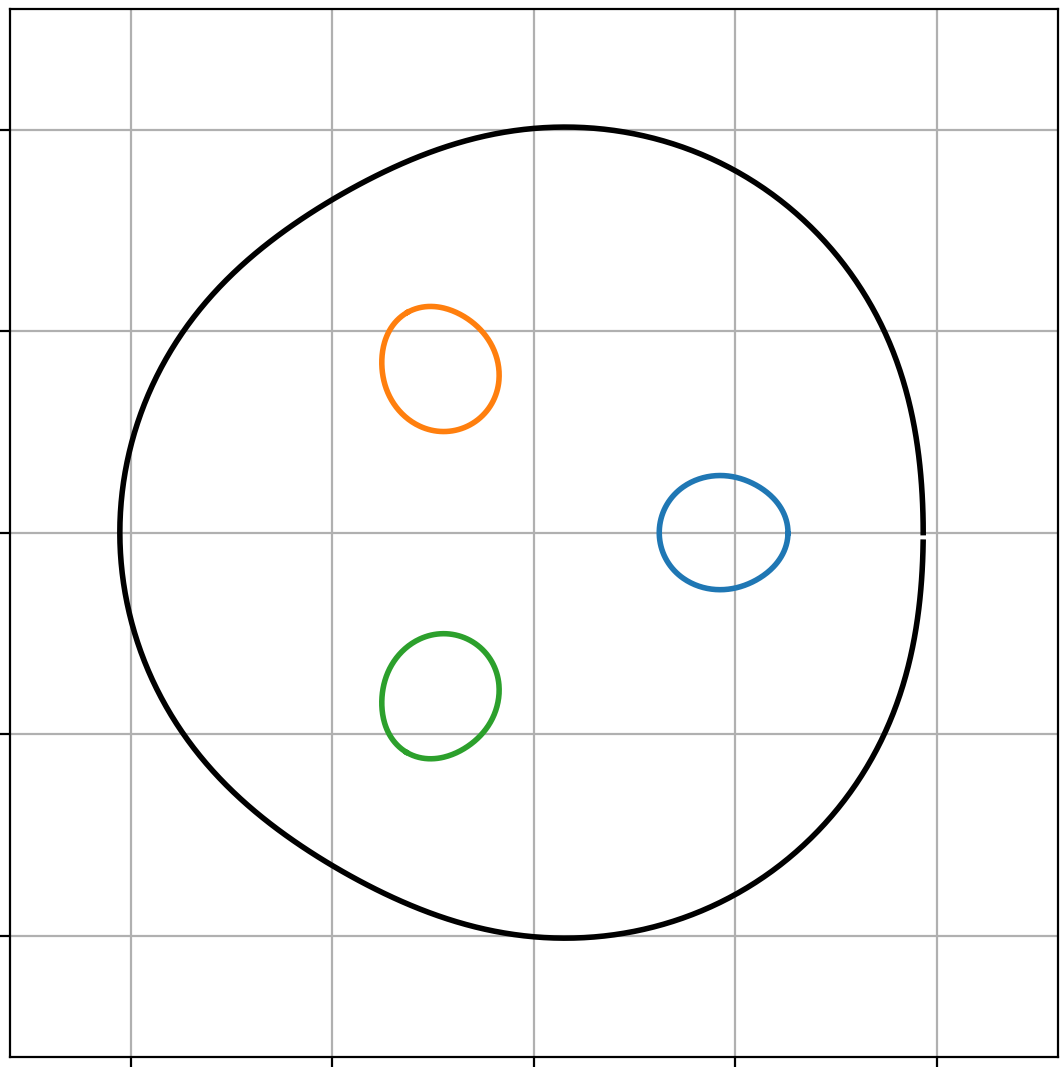}
\end{subfigure}
\caption{Numerically computed rotating configurations for $\sigma=0$ and
$\mathbf m=3$. From left to right,
$\varepsilon=0.05$, $0.10$, and $0.15$.}
\label{k32}
\end{figure}

\begin{figure}[h]
 \centering
\begin{subfigure}{.3\textwidth}
  \centering
  \includegraphics[width=1\linewidth]{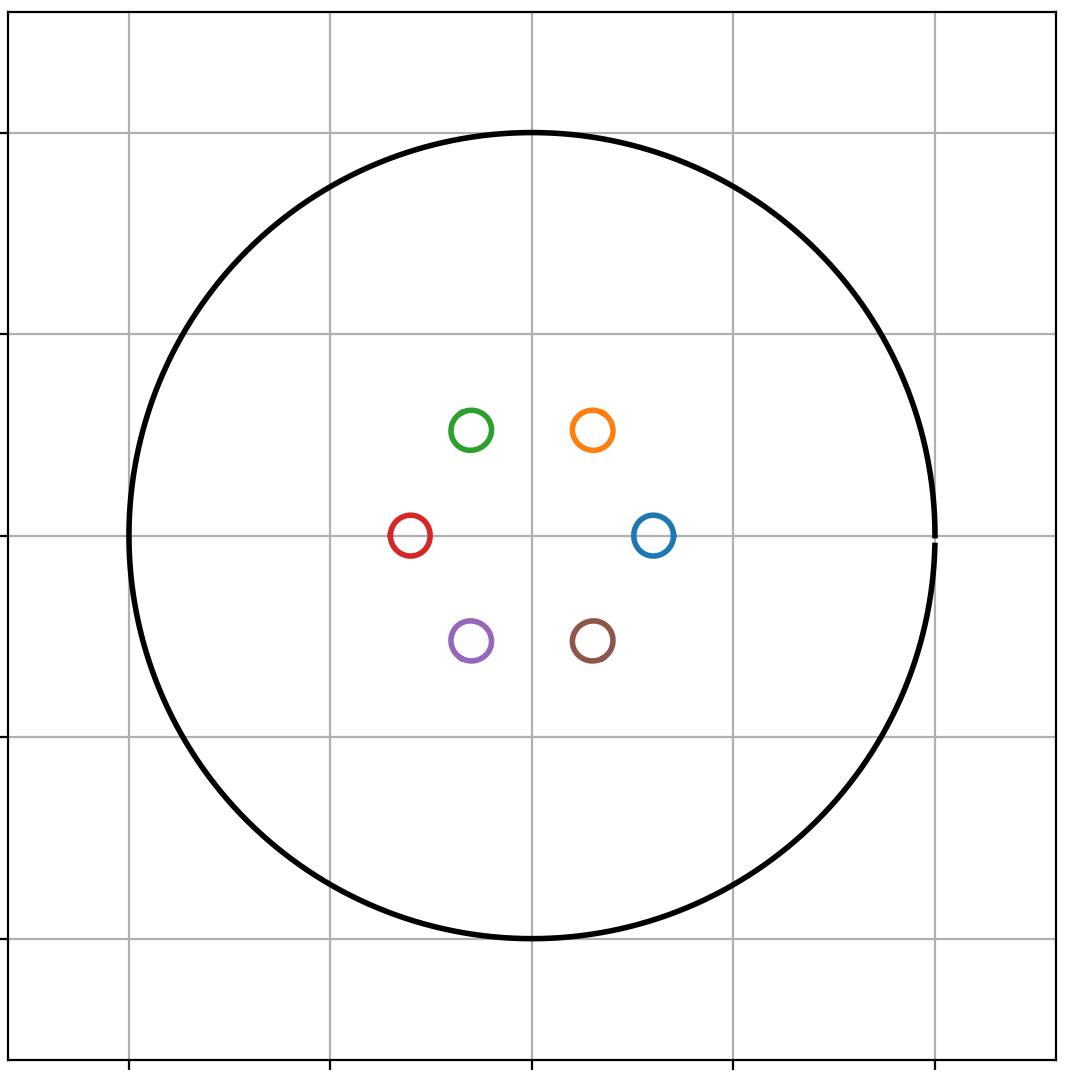}
\end{subfigure}%
\begin{subfigure}{.3\textwidth}
  \centering
  \includegraphics[width=1\linewidth]{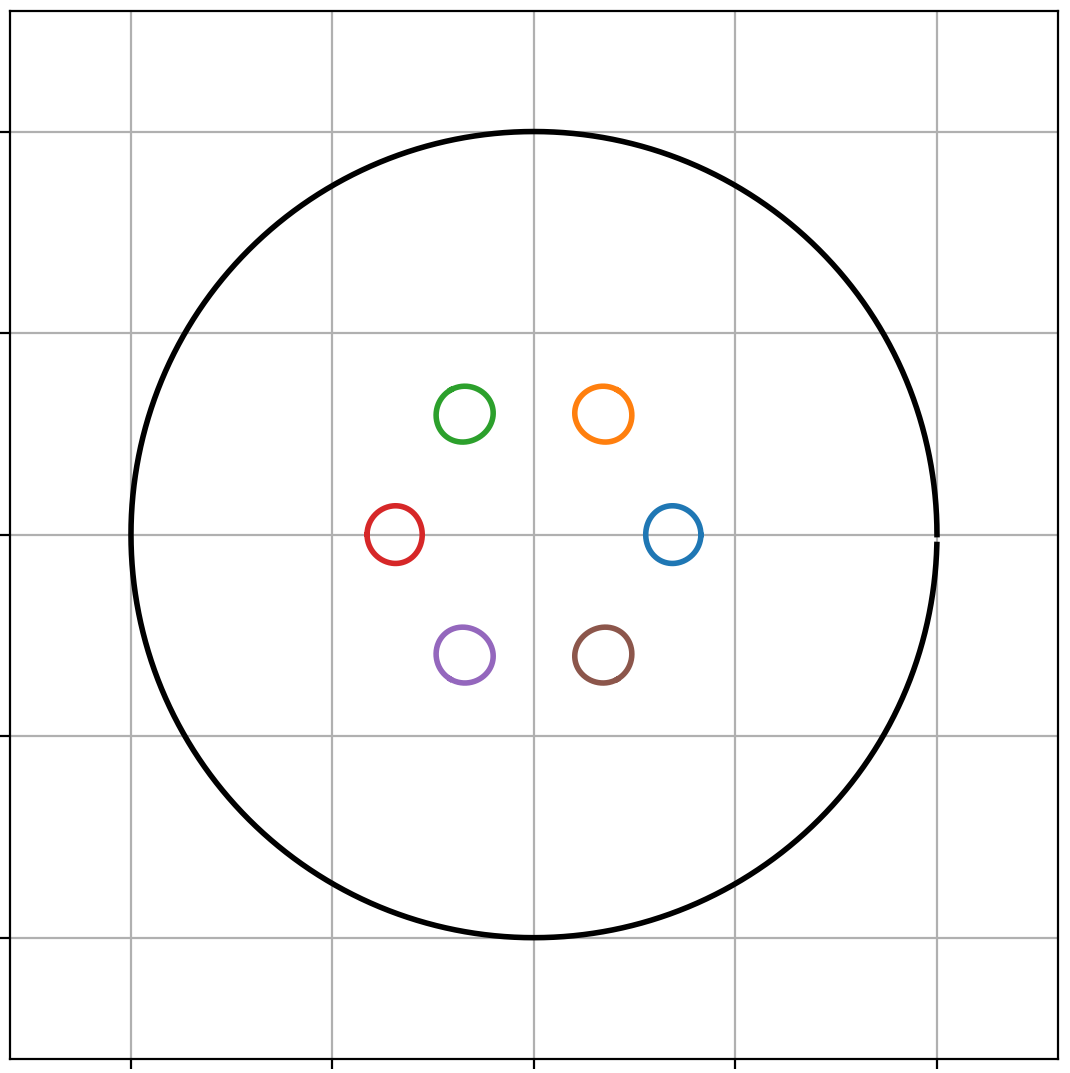}
\end{subfigure}
\begin{subfigure}{.3\textwidth}
  \centering
  \includegraphics[width=1\linewidth]{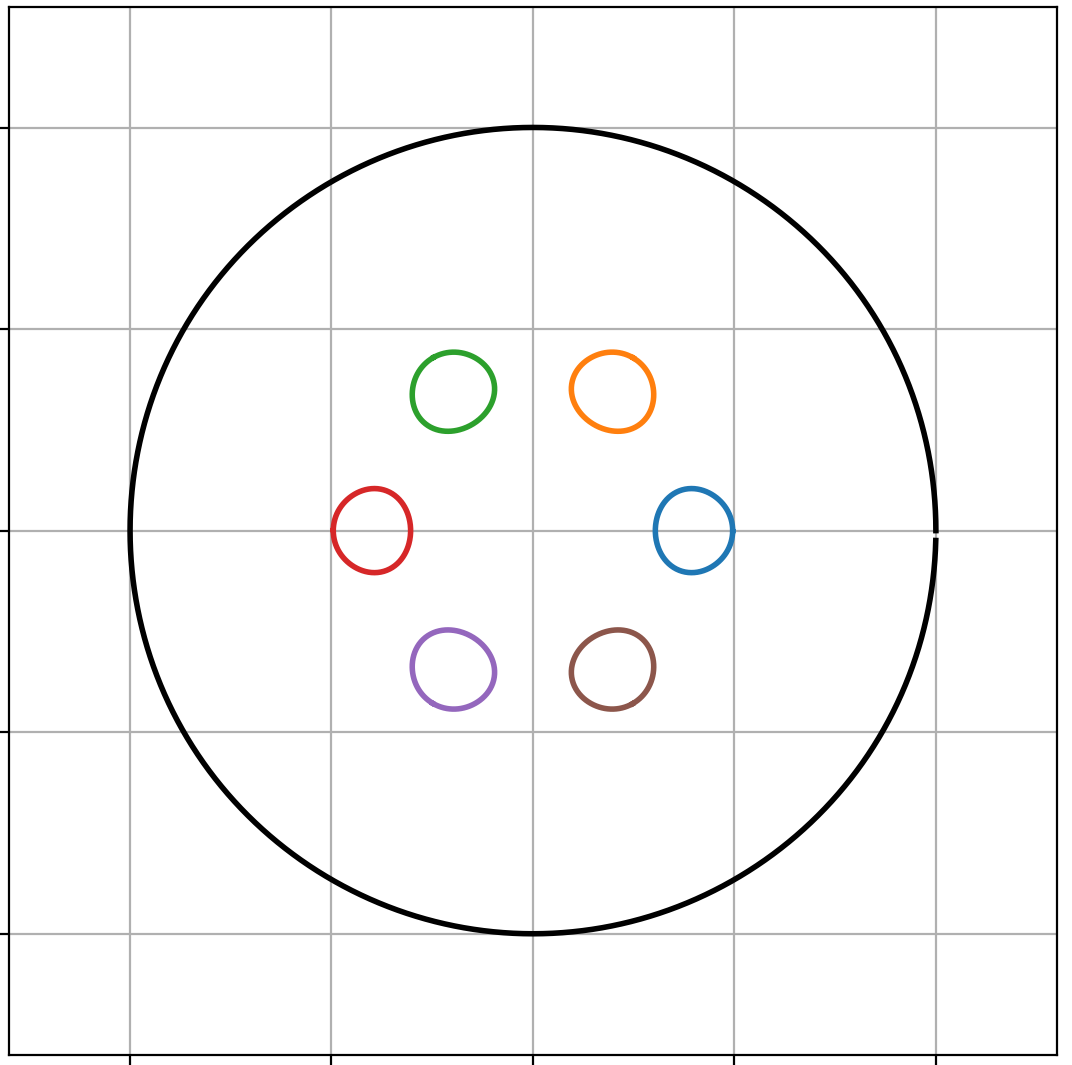}
\end{subfigure}
\caption{Numerically computed rotating configurations for $\sigma=0$ and
$\mathbf m=6$. From left to right,
$\varepsilon=0.05$, $0.07$, and $0.10$.}
\label{k6}
\end{figure}

\section{Formulation of the problem and scaling} \label{section_reformulation}

This section is devoted to reformulating the problem of finding uniformly rotating vortex patches with holes as a problem of finding the zeros of a suitable nonlinear operator. 
\subsection{Geometric setting and rotating-patch formulation}
Let
$$
    \mathbb T := \mathbb R/(2\pi\mathbb Z).
$$
Throughout this section, we identify $\mathbb R^2$ with the complex plane
$\mathbb C$. In particular, rotations through an angle $\theta$ are represented
by multiplication by $e^{i\theta}$. We use the convention
$$
    x^\perp := (-x_2,x_1),
    \qquad x=(x_1,x_2)\in\mathbb R^2.
$$

Fix an integer $\mathbf{m}\geq 2$, and let $\alpha>0$ be a parameter that will be
specified later. For every $\varepsilon>0$, let $\mathcal D_1^\varepsilon$ and
$\mathcal D_2^\varepsilon$ be bounded, simply connected domains with
sufficiently regular boundaries. Let $\sigma\in\mathbb R$ be a phase parameter. For $j\in\{1,\ldots,\mathbf{m}\}$,  define
\begin{equation}
\label{def_D2j}
    \mathcal D_{2,j}^{\varepsilon,\sigma}
    := e^{i\frac{\pi\sigma}{\mathbf{m}}}
    e^{i\frac{2\pi(j-1)}{\mathbf{m}}}\left(
        \varepsilon\mathcal D_2^\varepsilon
        +\varepsilon^\alpha \mathtt e_1
    \right),
    \qquad
    \mathtt e_1:=(1,0).
\end{equation}
When referring to the sets
$\mathcal D_{2,j}^{\varepsilon,\sigma}$ as holes, we restrict attention to
admissible configurations satisfying
$$
    \overline{\mathcal D_{2,j}^{\varepsilon,\sigma}}
    \subset \mathcal D_1^\varepsilon,
    \qquad
    \overline{\mathcal D_{2,j}^{\varepsilon,\sigma}}
    \cap
    \overline{\mathcal D_{2,k}^{\varepsilon,\sigma}}
    =\varnothing,
    \quad j\neq k.
$$

We consider the initial vorticity
$$
\omega_{0, \varepsilon}^\sigma(x) := \mathbf 1_{\mathcal D_1^\varepsilon}(x) - \frac{1}{\pi \varepsilon^2} \sum_{j = 1}^{\mathbf{m}} \mathbf 1_{\mathcal D_{2, j}^{\varepsilon,\sigma}}(x), \qquad x \in \R^2.
$$

We are interested in constructing a weak solution to \eqref{ESVF} with initial datum $\omega_{0, \varepsilon}$ that rotates rigidly about the origin with some constant angular velocity $\Omega \in \R$. In other words, we seek a weak solution to \eqref{ESVF} of the form 
$$\omega(t, x) = \omega_{0, \varepsilon}^\sigma\left(e^{- i \Omega t} x\right), \qquad (t, x) \in [0, \infty) \times \R^2,$$
for some $\Omega \in \R$. Plugging this ansatz into the  Euler equations, the problem reduces to solving the following system of $\mathbf{m}+1$ boundary equations for the unknowns $\Omega, \mathcal{D}_1^\varepsilon, \mathcal D_{2, 1}^{\varepsilon,\sigma}, \dots, \mathcal{D}_{2, \mathbf{m}}^{\varepsilon,\sigma}$:
\begin{equation} \label{Euler_System}
    \begin{cases}
    \left(u_{0, \varepsilon}(x) - \Omega x^\perp\right) \cdot \vec{n}_{\partial \mathcal D_1^\varepsilon}(x) = 0, & x \in \partial \mathcal D_{1}^\varepsilon, \\
    \left(u_{0, \varepsilon}(x) - \Omega x^\perp\right)\cdot \vec{n}_{\partial \mathcal D^{\varepsilon,\sigma}_{2, j}}(x) = 0, & x \in \partial \mathcal D_{2, j}^{\varepsilon,\sigma} \quad j \in \{1, \dots, \mathbf{m}\},
    \end{cases}
\end{equation}
where the symbol $\vec{n}_{\partial \mathcal{D}}(x)$ denotes the outward unit normal vector to $\partial \mathcal{D}$ at $x \in \partial \mathcal D$, while $u_{0, \varepsilon}$ is the initial velocity of the fluid given in terms of $\omega_{0, \varepsilon}$ via Biot--Savart Law \eqref{Biot-Savart_Law}.
\subsection{Reduction by rotational symmetry}

We henceforth assume that the outer domain is $\mathbf{m}$-fold rotationally symmetric:
\begin{equation}\label{assupmtion:mfold D1}
       \mathcal D_1^\varepsilon
    =
    e^{i\frac{2\pi}{\mathbf{m}}}\mathcal D_1^\varepsilon.
\end{equation}
Note that, from \eqref{def_D2j}, one has
\begin{equation}\label{assupmtion: Dj}
 e^{i\frac{2\pi}{\mathbf m}}
\mathcal D_{2,j}^{\varepsilon,\sigma}
=
\mathcal D_{2,j+1}^{\varepsilon,\sigma},
\end{equation}
where the indices are understood modulo $\mathbf{m}$. Hence the family
$\{\mathcal D_{2,j}^{\varepsilon,\sigma}\}_{j=1}^\mathbf{m}$ is $\mathbf{m}$-fold
rotationally symmetric for every $\sigma\in\mathbb R$.
Moreover,
\begin{equation*}
    \mathcal D_{2,j}^{\varepsilon,\sigma+2q}
    =
    \mathcal D_{2,j+q}^{\varepsilon,\sigma},
    \qquad q\in\mathbb Z,
\end{equation*}
so that, up to a cyclic relabeling of the inner components, the phase
parameter is defined modulo~$2$. Thus, 
$$
    \sigma\in\mathbb R/2\mathbb Z.
$$
We may therefore choose the representative
$$
    \sigma\in[0,2).
$$
 The parameter $\sigma$ describes the orientation of the polygon of inner
components relative to the outer boundary.

The following proposition shows that it is enough to impose the boundary
condition on a single inner component.
\begin{pro} \label{reduction_by_symmetry}
Under the assumption that $\mathcal D_1^\varepsilon$ is $\mathbf{m}$-fold rotationally symmetric, for every $k\in\{1,\ldots,\mathbf{m}\}$ and every
$x\in\partial\mathcal D_{2,1}^{\varepsilon,\sigma}$, setting $y:=e^{i\frac{2\pi(k-1)}{\mathbf{m}}} x$, one has
$$
\begin{aligned}
    &\bigl(u_{0,\varepsilon}(y)-\Omega y^\perp\bigr)
    \cdot \vec{n}_{\partial\mathcal D_{2,k}^{\varepsilon,\sigma}}(y)
   =
    \bigl(u_{0,\varepsilon}(x)-\Omega x^\perp\bigr)
    \cdot\vec{n}_{\partial\mathcal D_{2,1}^{\varepsilon,\sigma}}(x).
\end{aligned}
$$
In particular, the relative velocity is tangent to
$\partial\mathcal D_{2,k}^{\varepsilon,\sigma}$ if and only if it is tangent to
$\partial\mathcal D_{2,1}^{\varepsilon,\sigma}$.
\end{pro}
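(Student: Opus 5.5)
The plan is to show that the whole configuration, and hence the relative velocity field, is equivariant under the rotation $R:=e^{i\frac{2\pi}{\mathbf m}}$. Iterating $k-1$ times then gives the identity. Throughout, I identify $R$ with the corresponding orthogonal matrix in $SO(2)$.

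\emph{Step 1: invariance of the vorticity.} By the assumption \eqref{assupmtion:mfold D1}, $\mathbf 1_{\mathcal D_1^\varepsilon}(R^{-1}x)=\mathbf 1_{R\mathcal D_1^\varepsilon}(x)=\mathbf 1_{\mathcal D_1^\varepsilon}(x)$. By \eqref{assupmtion: Dj}, $R$ permutes the inner components cyclically, with indices modulo $\mathbf m$. Since all inner components carry the same weight $-\frac{1}{\pi\varepsilon^2}$, the sum is unchanged. Hence $\omega_{0,\varepsilon}^\sigma(R^{-1}x)=\omega_{0,\varepsilon}^\sigma(x)$.

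\emph{Step 2: equivariance of the velocity.} The kernel in \eqref{Biot-Savart_Law} satisfies $K(Rz)=RK(z)$, because $(Rz)^\perp=Rz^\perp$ (rotations commute with the rotation by $\pi/2$) and $|Rz|=|z|$. For any $x$,
$$
u_{0,\varepsilon}(Rx)=\int K(Rx-y)\,\omega_{0,\varepsilon}^\sigma(y)\,dy.
$$
Substituting $y=Ry'$, which has unit Jacobian, and using Step 1, this equals $\int K(R(x-y'))\,\omega_{0,\varepsilon}^\sigma(y')\,dy'=R\,u_{0,\varepsilon}(x)$. Likewise $(Rx)^\perp=Rx^\perp$, so the relative field $v(x):=u_{0,\varepsilon}(x)-\Omega x^\perp$ satisfies $v(Rx)=Rv(x)$. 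The integral converges absolutely, since $K$ is locally integrable and $\omega_{0,\varepsilon}^\sigma$ is bounded with compact support. The change of variables is therefore legitimate, including at boundary points.

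\emph{Step 3: normals and conclusion.} Since $\partial\mathcal D_{2,k}^{\varepsilon,\sigma}=R^{k-1}\partial\mathcal D_{2,1}^{\varepsilon,\sigma}$, a parametrization $z$ of $\partial\mathcal D_{2,1}^{\varepsilon,\sigma}$ is mapped by $R^{k-1}$ to a parametrization of $\partial\mathcal D_{2,k}^{\varepsilon,\sigma}$. A rotation preserves orientation, so it maps the tangent vector to the tangent vector and the outward normal to the outward normal. Hence
$$
\vec n_{\partial\mathcal D_{2,k}^{\varepsilon,\sigma}}(R^{k-1}x)=R^{k-1}\vec n_{\partial\mathcal D_{2,1}^{\varepsilon,\sigma}}(x).
$$
With $y=R^{k-1}x$, Step 2 gives
$$
v(y)\cdot\vec n_{\partial\mathcal D_{2,k}^{\varepsilon,\sigma}}(y)=R^{k-1}v(x)\cdot R^{k-1}\vec n_{\partial\mathcal D_{2,1}^{\varepsilon,\sigma}}(x)=v(x)\cdot\vec n_{\partial\mathcal D_{2,1}^{\varepsilon,\sigma}}(x),
$$
because $R^{k-1}$ is orthogonal. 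The tangency statement follows immediately.

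There is no real obstacle here. The only point needing care is to state explicitly that the inner weights are identical and that the outer domain is $\mathbf m$-fold symmetric, since these hypotheses are what make the vorticity invariant. Without either one, the equivariance in Step 2 would fail.
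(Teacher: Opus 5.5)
Your proposal is correct and follows the paper's proof: invariance of the vorticity under $e^{i\frac{2\pi}{\mathbf m}}$, then equivariance of the Biot--Savart velocity via $K(Rz)=RK(z)$ and a change of variables, then rotation of outward normals together with orthogonal invariance of the scalar product. Your remark that the Biot--Savart integral converges absolutely at boundary points is a small justification the paper leaves implicit.
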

\begin{proof}
The cyclic permutation of  the inner domains \eqref{assupmtion: Dj},
together with  the rotational
symmetry of $\mathcal D_1^\varepsilon$, implies
$$
    \omega_{0,\varepsilon}^\sigma\left( e^{i\frac{2\pi}{\mathbf{m}}} x\right)
    =
    \omega_{0,\varepsilon}^\sigma(x),
    \qquad x\in\mathbb R^2.
$$
 Indeed, using \eqref{assupmtion:mfold D1} and   \eqref{assupmtion: Dj},  we obtain
\begin{align*}
\omega_{0,\varepsilon}^\sigma
\left(
    e^{i\frac{2\pi}{\mathbf m}}x
\right)
&=
\mathbf 1_{
    e^{-i\frac{2\pi}{\mathbf m}}\mathcal D_1^\varepsilon
}(x)
-
\frac{1}{\pi\varepsilon^2}
\sum_{j=1}^{\mathbf m}
\mathbf 1_{
    e^{-i\frac{2\pi}{\mathbf m}}
    \mathcal D_{2,j}^{\varepsilon,\sigma}
}(x)
\\
&=
\mathbf 1_{\mathcal D_1^\varepsilon}(x)
-
\frac{1}{\pi\varepsilon^2}
\sum_{j=1}^{\mathbf m}
\mathbf 1_{\mathcal D_{2,j-1}^{\varepsilon,\sigma}}(x)
\\
&=
\omega_{0,\varepsilon}^\sigma(x).
\end{align*}
Since the Biot-Savart kernel (defined in \eqref{Biot-Savart_Law}) satisfies 
$$
    K\left( e^{i\frac{2\pi}{\mathbf{m}}} x\right)= e^{i\frac{2\pi}{\mathbf{m}}} K(x),
$$
the change of variables $y= e^{i\frac{2\pi}{\mathbf{m}}} z$ gives
\begin{align*}
    u_{0,\varepsilon}\left( e^{i\frac{2\pi}{\mathbf{m}}}x\right)
    &=
    \int_{\mathbb R^2}
    K\left( e^{i\frac{2\pi}{\mathbf{m}}} x-y\right)\omega_{0,\varepsilon}^\sigma(y)\,dy
    \\
    &=
    \int_{\mathbb R^2}
    K\left(e^{i\frac{2\pi}{\mathbf{m}}}(x-z)\right)
    \omega_{0,\varepsilon}^\sigma\left( e^{i\frac{2\pi}{\mathbf{m}}} z\right)\,dz
    \\
    &=
    e^{i\frac{2\pi}{\mathbf{m}}}\int_{\mathbb R^2}
    K(x-z)\omega_{0,\varepsilon}^\sigma(z)\,dz
    \\
    &=
    e^{i\frac{2\pi}{\mathbf{m}}} u_{0,\varepsilon}(x).
\end{align*}
Iterating this identity yields
$$
    u_{0,\varepsilon}\left(e^{i\frac{2\pi \ell }{\mathbf{m}}} x\right)
    =
    e^{i\frac{2\pi \ell}{\mathbf{m}}} u_{0,\varepsilon}(x),
    \qquad \ell\in\mathbb Z.
$$
If $y=e^{i\frac{2\pi (k-1)}{\mathbf{m}}}x$, then rotations preserve outward unit normals, so that
$$
    \vec{n}_{\partial\mathcal D_{2,k}^{\varepsilon,\sigma}}(y)
    =
    e^{i\frac{2\pi (k-1)}{\mathbf{m}}}
    \vec{n}_{\partial\mathcal D_{2,1}^{\varepsilon,\sigma}}(x).
$$
Moreover, rotations commute with the map $x\mapsto x^\perp$. Therefore,
using the invariance of the Euclidean scalar product under rotations,
\begin{align*}
    \bigl(u_{0,\varepsilon}(y)-\Omega y^\perp\bigr)
    \cdot \vec{n}_{\partial\mathcal D_{2,k}^{\varepsilon,\sigma}}(y)
    &=
    e^{i\frac{2\pi(k-1)}{\mathbf{m}}}
    \bigl(u_{0,\varepsilon}(x)-\Omega x^\perp\bigr)
    \cdot
    e^{i\frac{2\pi (k-1)}{\mathbf{m}}}
    \vec{n}_{\partial\mathcal D_{2,1}^{\varepsilon,\sigma}}(x)
    \\
    &=
    \bigl(u_{0,\varepsilon}(x)-\Omega x^\perp\bigr)
    \cdot \vec{n}_{\partial\mathcal D_{2,1}^{\varepsilon,\sigma}}(x).
\end{align*}
This proves the claim.
\end{proof}

Consequently, for a fixed phase $\sigma$, the problem reduces to a system of
two boundary equations for the angular velocity $\Omega$ and the
two generating domains $\mathcal D_1^\varepsilon$ and
$\mathcal D_2^\varepsilon$, the remaining inner components being determined
by \eqref{def_D2j}. Namely, the system \eqref{Euler_System} is equivalent to
\begin{equation}
\label{New_System}
    \begin{cases}
    \bigl(u_{0,\varepsilon}(x)-\Omega x^\perp\bigr)
    \cdot\vec{n}_{\partial\mathcal D_1^\varepsilon}(x)=0,
    &
    x\in\partial\mathcal D_1^\varepsilon,
    \\[2mm]
    \bigl(u_{0,\varepsilon}(x)-\Omega x^\perp\bigr)
    \cdot \vec{n}_{\partial( e^{i\frac{\pi\sigma}{\mathbf{m}}}(\varepsilon  \mathcal D_2^\varepsilon
        +\varepsilon^\alpha \mathtt  e_1))}(x)=0,
    &
    x\in
    \partial\bigl(
        e^{i\frac{\pi\sigma}{\mathbf{m}}}(\varepsilon\mathcal D_2^\varepsilon
        +\varepsilon^\alpha\mathtt e_1)
    \bigr).
    \end{cases}
\end{equation}

\subsection{Stream-function formulation}  Henceforth, we fix $\sigma\in[0,2)$. Unless needed for clarity, the
dependence of the vorticity, velocity, stream function
on $\sigma$ will be suppressed from the notation.
Let $\Psi_\varepsilon$ denote the stream function associated with $\omega_{0, \varepsilon}$. With the convention used in
\eqref{steam_vorticity},
\begin{equation} \label{Psi_eps_def}
\Psi_{\varepsilon}(x) := \left(\frac{1}{2 \pi} \log | \cdot | * \omega_{0, \varepsilon}^\sigma\right)(x), \quad x \in \R^2,
\end{equation}
and
$$
    u_{0,\varepsilon}
    =
    \nabla^\perp\Psi_\varepsilon.
$$
Let
$$
    z_j:\mathbb T\longrightarrow\partial\mathcal D_j^\varepsilon,
    \qquad j\in\{1,2\},
$$
be regular counterclockwise parametrizations, and define
$$
    \gamma_1(\vartheta):=z_1(\vartheta),
    \qquad
    \gamma_2(\vartheta)
    :=
    e^{i\frac{\pi\sigma}{\mathbf{m}}}(\varepsilon z_2(\vartheta)
    +\varepsilon^\alpha\mathtt e_1).
$$
For any regular counterclockwise parametrization $\gamma$ of the boundary of a
domain $\mathcal D$, the outward unit normal is
$$
    \vec{n}_{\partial {\mathcal D}}(\gamma(\vartheta))
    =
    -\frac{\gamma'(\vartheta)^\perp}{|\gamma'(\vartheta)|}.
$$
It follows that
\begin{align}
\label{general_boundary_identity}
\bigl(
    u_{0,\varepsilon}(\gamma(\vartheta))
    -
    \Omega\gamma(\vartheta)^\perp
\bigr)
\cdot
\vec n_{\partial\mathcal D}(\gamma(\vartheta))
& =
\frac{
    \Omega\gamma(\vartheta)\cdot\gamma'(\vartheta)
    -
    \nabla\Psi_\varepsilon(\gamma(\vartheta))
    \cdot\gamma'(\vartheta)
}{
    |\gamma'(\vartheta)|
}
\nonumber\\
& =
\frac{
    \Omega\gamma(\vartheta)\cdot\gamma'(\vartheta)
    -
    \partial_\vartheta
    \bigl[
        \Psi_\varepsilon(\gamma(\vartheta))
    \bigr]
}{
    |\gamma'(\vartheta)|
}.
\end{align}
Thus, the boundary condition is equivalent to the vanishing of the numerator
in \eqref{general_boundary_identity}.
For the inner component, one has
\begin{align*}
\gamma_2(\vartheta)\cdot\gamma_2'(\vartheta)
&=
\left[
e^{i\frac{\pi\sigma}{\mathbf m}}
\left(
    \varepsilon z_2(\vartheta)
    +
    \varepsilon^\alpha\mathtt e_1
\right)
\right]
\cdot
\left[
e^{i\frac{\pi\sigma}{\mathbf m}}
\varepsilon z_2'(\vartheta)
\right]
\\
&=
\left(
    \varepsilon z_2(\vartheta)
    +
    \varepsilon^\alpha\mathtt e_1
\right)
\cdot
\varepsilon z_2'(\vartheta)
\\
&=
\varepsilon^2
z_2(\vartheta)\cdot z_2'(\vartheta)
+
\varepsilon^{\alpha+1}
\bigl(z_2'(\vartheta)\bigr)_1.
\end{align*}
Therefore, \eqref{New_System} is equivalent to
\begin{equation}
\label{Last_System}
    \begin{cases}
    \displaystyle
    \Omega z_1(\vartheta)\cdot z_1'(\vartheta)
    -
    \partial_\vartheta
    \bigl[
        \Psi_\varepsilon(z_1(\vartheta))
    \bigr]
    =0,
    \\[3mm]
    \displaystyle
    \Omega\left[
        \varepsilon^2
        z_2(\vartheta)\cdot z_2'(\vartheta)
        +
        \varepsilon^{\alpha+1}
        \bigl(z_2'(\vartheta)\bigr)_1
    \right]
    -
    \partial_\vartheta
    \left[
        \Psi_\varepsilon\left(e^{i\frac{\pi\sigma}{\mathbf{m}}}(
            \varepsilon z_2(\vartheta)
            +\varepsilon^\alpha\mathtt e_1)
        \right)
    \right]
    =0,
    \end{cases}
    \qquad \vartheta\in\mathbb T.
\end{equation}

\subsection{Radial parametrization and scaled nonlinear operator}

We restrict to domains that are
star-shaped with respect to the origin and write
\begin{equation}\label{Djeps}
    \mathcal D_j^\varepsilon
    =
    \left\{
        re^{i\vartheta}:
        \vartheta\in\mathbb T,\;
        0\leq r<w_j(\vartheta)
    \right\},
    \qquad j\in\{1,2\},
\end{equation}
where 
$$
    w_j(\vartheta)
    :=
    \sqrt{1+2\varepsilon f_j(\vartheta)}.
$$
Here $f_j:\mathbb T\to\mathbb R$ is sufficiently regular and
$1+2\varepsilon f_j>0$. The ${\bf m}$-fold symmetry of the outer domain is encoded
by the condition
$$
    f_1\left(\vartheta+\frac{2\pi}{{\bf m}}\right)
    =
    f_1(\vartheta).
$$
Thus, the boundary $\partial \mathcal D_j^\varepsilon $ is parametrized by
$$
    z_j(\vartheta)
    =
    w_j(\vartheta)e^{i\vartheta}.
$$
This parametrization is inspired by \cite{BHM22}, and  has the useful property
$$
    w_j'(\vartheta)
    =
    \varepsilon
    w_j(\vartheta)^{-1}
    f_j'(\vartheta),
$$
and hence
$$
    z_j'(\vartheta)
    =
    \varepsilon
    w_j(\vartheta)^{-1}
    f_j'(\vartheta)e^{i\vartheta}
    +
    iw_j(\vartheta)e^{i\vartheta}.
$$
In particular,
\begin{equation}
\label{radial_scalar_product}
    z_j(\vartheta)\cdot z_j'(\vartheta)
    =
    w_j(\vartheta)w_j'(\vartheta)
    =
    \varepsilon f_j'(\vartheta).
\end{equation}
Moreover,
\begin{equation}
\label{first_component_z2}
    \bigl(z_2'(\vartheta)\bigr)_1
    =
    \varepsilon
    w_2(\vartheta)^{-1}
    f_2'(\vartheta)\cos\vartheta
    -
    w_2(\vartheta)\sin\vartheta.
\end{equation}
The choice $w_j^2=1+2\varepsilon f_j$ also linearizes the area constraint:
$$
    |\mathcal D_j^\varepsilon|
    =
    \frac12\int_0^{2\pi}w_j(\vartheta)^2\,d\vartheta
    =
    \pi
    +
    \varepsilon
    \int_0^{2\pi}f_j(\vartheta)\,d\vartheta.
$$
Thus, imposing zero average on $f_j$ preserves the area
$|\mathcal D_j^\varepsilon|=\pi$.

In order to make the dependence of the stream function on the boundary perturbations $f := \left(f_1, f_2\right)$ explicit, we perform a change of variables in the integrals over the inner components, so that from \eqref{Psi_eps_def} it follows that $\Psi_\varepsilon$ can be written as
\begin{align}
\label{Psi_representation}
    \Psi_{\varepsilon}(z)
    &=
    \frac{1}{2\pi}
    \int_0^{2\pi}
    \int_0^{w_1(\eta)}
    \log\left|z-re^{i\eta}\right|
    r\,dr\,d\eta
    \nonumber\\
    &\quad
    -
    \frac{1}{2\pi^2}
    \sum_{j=1}^{{\bf m}}
    \int_0^{2\pi}
    \int_0^{w_2(\eta)}
    \log\left|
        z-
        e^{i \phi_j}\left(
            \varepsilon re^{i\eta}
            +\varepsilon^\alpha
        \right)
    \right|
    r\,dr\,d\eta.
\end{align}
We now introduce the scaled angular velocity
$$
    \Omega_{\varepsilon,\Lambda}
    :=
    \frac{\Omega_0}{\varepsilon^{2\alpha}}
    +
    \frac{\Lambda}{\varepsilon^\alpha},
$$
where $\Omega_0$ is a constant to be determined from the limiting problem.
Using \eqref{radial_scalar_product} and
\eqref{first_component_z2}, define
\begin{equation}\label{def F}
   \mathcal F^\sigma(\varepsilon,\Lambda,f)
    :=
    \bigl(
        \mathcal F_1^\sigma(\varepsilon,\Lambda,f),
        \mathcal F_2^\sigma(\varepsilon,\Lambda,f)
    \bigr),
\end{equation}
where $f:=(f_1,f_2)$ and 
\begin{align}
\label{F1}
  \mathcal  F_1^\sigma(\varepsilon,\Lambda,f)(\vartheta)
    &:=
    \frac{1}{\varepsilon^{1-2\alpha}}
    \Bigg\{
        \varepsilon
        \left(\frac{\Omega_0}{\varepsilon^{2 \alpha}} + \frac{\Lambda}{\varepsilon^\alpha} \right)
        f_1'(\vartheta)
        -
        \partial_\vartheta
        \left[
            \Psi_{\varepsilon}
            \left(
                w_1(\vartheta)e^{i\vartheta}
            \right)
        \right]
    \Bigg\},
\end{align}
and
\begin{align}
\label{F2}
   \mathcal F_2^\sigma(\varepsilon,\Lambda,f)(\vartheta)
    &:=
    \frac{1}{\varepsilon}
    \Bigg\{
        \left(\frac{\Omega_0}{\varepsilon^{2 \alpha}} + \frac{\Lambda}{\varepsilon^\alpha} \right)
        \Big[
            \varepsilon^3 f_2'(\vartheta)
           +
            \varepsilon^{\alpha+1}
            \Big(
                \varepsilon
                w_2(\vartheta)^{-1}
                f_2'(\vartheta)\cos\vartheta
                -
                w_2(\vartheta)\sin\vartheta
            \Big)
        \Big]
        \nonumber\\
    &\hspace{2.2cm}
        -
        \partial_\vartheta
        \left[
            \Psi_{\varepsilon}
            \left(
                e^{i\frac{\pi\sigma}{\mathbf m}}
    \left(
        \varepsilon w_2(\vartheta)e^{i\vartheta}
        +
        \varepsilon^\alpha
    \right)
            \right)
        \right]
    \Bigg\}.
\end{align}
Substituting \eqref{Psi_representation} into \eqref{F1}--\eqref{F2} gives the
corresponding fully expanded integral formulas:
\begin{align} \label{F12}
 \mathcal   F_1^\sigma(\varepsilon, \Lambda, f)(\vartheta) & =  \left(\Omega_0 + \varepsilon^\alpha\Lambda \right) \varepsilon f_1'(\vartheta)  + \mathcal S_1(\varepsilon,f_1)(\vartheta)+ \mathcal I_{1,2}(\varepsilon,f)(\vartheta),
\end{align}
\begin{align}
\label{F22}
\mathcal F_2^\sigma(\varepsilon,\Lambda,f)(\vartheta)
&=
\left(\Omega_0 + \varepsilon^\alpha \Lambda \right)
\Big[
    \varepsilon^{2-2\alpha} f_2'(\vartheta)
    +\varepsilon^{1-\alpha} w_2(\vartheta)^{-1}
        f_2'(\vartheta)\cos\vartheta
        -\varepsilon^{-\alpha}
        w_2(\vartheta)\sin\vartheta
\Big]
\nonumber\\
&\quad +\mathcal S_2(\varepsilon,f_2)(\vartheta)
+
\mathcal I_{2,1}(\varepsilon,f)(\vartheta)
+  \mathcal I_{2,2}(\varepsilon,f_2)(\vartheta),
\end{align}
where
\begin{align*}
    \mathcal S_1(\varepsilon,f_1)&:=- \frac{\partial_\vartheta}{2\pi \varepsilon^{1-2\alpha}} \int_{0}^{2\pi} \int_{0}^{w_1(\eta)} \log \left|w_1(\vartheta) e^{i \vartheta} - r e^{i \eta}\right|  r \, dr \, d\eta,\\
   \mathcal S_2(\varepsilon,f_2)&:= \frac{\partial_\vartheta}{2\pi^2\varepsilon}
\int_0^{2\pi}
\int_0^{w_2(\eta)}
\log\left| w_2(\vartheta)e^{i\vartheta}  
    - r e^{i\eta}
\right|
r\,dr\,d\eta,\\
    \mathcal I_{1,2}(\varepsilon,f)&:= \frac{\partial_\vartheta}{2 \pi^2 \varepsilon^{1-2\alpha}} \sum_{j = 1}^{\mathbf{m}} \int_{0}^{2 \pi} \int_{0}^{w_2(\eta)} \log \left|w_1(\vartheta) e^{i \vartheta} - e^{i \frac{\pi(2j-2+\sigma)}{\mathbf m}} \left(\varepsilon r e^{i \eta} + \varepsilon^\alpha \right)\right|  r \, dr \, d\eta,\\
    \mathcal I_{2,1}(\varepsilon,f)&:=-\frac{\partial_\vartheta}{2\pi \varepsilon}
\int_0^{2\pi}
\int_0^{w_1(\eta)}
\log\left|
    e^{i\frac{\pi\sigma}{\mathbf m}}
    \left(
        \varepsilon w_2(\vartheta)e^{i\vartheta}
        +
        \varepsilon^\alpha
    \right)
    -
    r e^{i\eta}
\right|
r\,dr\,d\eta
    \\
    \mathcal I_{2,2}(\varepsilon,f_2)&:= \frac{\partial_\vartheta}{2\pi^2\varepsilon}
\sum_{j=2}^{\mathbf m}
\int_0^{2\pi}
\int_0^{w_2(\eta)}
\log\left|
    \varepsilon w_2(\vartheta)e^{i\vartheta}
        +
        \varepsilon^\alpha   
    -
    e^{i\frac{2\pi(j-1)}{\mathbf m}}
    \left(
        \varepsilon r e^{i\eta}
        +
        \varepsilon^\alpha
    \right)
\right|
r\,dr\,d\eta
\end{align*}
Consequently, for admissible configurations,
$$
   \mathcal  F^\sigma(\varepsilon,\Lambda,f)=0
$$
is equivalent to the reduced free-boundary system \eqref{Last_System}. The
parameters $\alpha$ and $\Omega_0$ will be chosen so that the operator admits
a finite and nontrivial limiting formulation as $\varepsilon\to0^{{+}}$, suitable
for the subsequent application of the implicit function theorem.

\section{The limiting configuration and the choice of scaling parameters}
\label{section_trivial_solutions}
The purpose of this section is to identify the singular base configuration
for the subsequent implicit-function argument and to determine the admissible
values of the scaling parameters $\alpha$ and $\Omega_0$. More precisely, we
compute the limit
$$
    \lim_{\varepsilon\to0^+}
   \mathcal  F^\sigma(\varepsilon,\Lambda,0),
$$
and choose $\Omega_0$ so that
$$
   \mathcal  F^\sigma(0,0,0)=0.
$$

For the moment, the notation $\mathcal  F^\sigma(0,\Lambda,0)$ denotes the above
one-sided limit. In the following section, we introduce the functional
framework and show that this definition is compatible with the appropriate extension of the nonlinear operator to $\varepsilon=0$.

For $f=0$ and $\varepsilon>0$, the outer component is the unit disk and the
inner components are $\mathbf m$ disks of radius $\varepsilon$, whose centers
form a regular polygon of radius $\varepsilon^\alpha$. As
$\varepsilon\to0^+$, these components shrink and their centers collapse to
the origin. The resulting singular limit consists formally of a Rankine
vortex together with a point vortex of total circulation $-\mathbf m$ at the
origin. The leading coefficient $\Omega_0$, however, is determined by the
angular velocity of the regular polygon formed by the $\mathbf m$
concentrated vortices.

\begin{pro}
\label{F1_at_0}
Let $\sigma\in\mathbb R$. If
$$
    \alpha>\frac{1}{\mathbf m+2},
$$
then, for every $\Lambda\in\mathbb R$,
$$
   \mathcal  F_1^\sigma(0,\Lambda,0){(\vartheta)}=0, \qquad {\vartheta \in \R}.
$$
\end{pro}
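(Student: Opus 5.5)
The plan is to evaluate each term of the expanded formula \eqref{F12} at $f=0$, keeping $\varepsilon>0$, and then let $\varepsilon\to0^+$. When $f=(0,0)$ we have $w_1\equiv w_2\equiv1$. The first term $(\Omega_0+\varepsilon^\alpha\Lambda)\varepsilon f_1'$ is then identically zero for every $\Lambda$, so the dependence on $\Lambda$ drops out.

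For the self-interaction term $\mathcal S_1(\varepsilon,0)$, the integral
$$
\frac1{2\pi}\int_0^{2\pi}\int_0^1\log|e^{i\vartheta}-re^{i\eta}|\,r\,dr\,d\eta
$$
is the logarithmic potential of the unit disk evaluated on the unit circle. It is radial, hence independent of $\vartheta$, and so $\mathcal S_1(\varepsilon,0)\equiv0$ for every $\varepsilon>0$, whatever the singular prefactor $\varepsilon^{2\alpha-1}$.

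The main work is the interaction term $\mathcal I_{1,2}(\varepsilon,0)$. For $\varepsilon$ small we have $\varepsilon^\alpha+\varepsilon<1$, so the map $y\mapsto\log|e^{i\vartheta}-y|$ is harmonic on each small disk $e^{i\phi_j}(\varepsilon\mathbb D+\varepsilon^\alpha)$, where $\phi_j:=\pi(2j-2+\sigma)/\mathbf m$. The mean value property (equivalently, Newton's theorem for the disk) replaces the potential of each disk by that of a point mass at its center:
$$
\int_0^{2\pi}\int_0^1\log\bigl|e^{i\vartheta}-e^{i\phi_j}(\varepsilon re^{i\eta}+\varepsilon^\alpha)\bigr|\,r\,dr\,d\eta=\pi\log\bigl|1-\varepsilon^\alpha e^{i(\phi_j-\vartheta)}\bigr|.
$$
This gives
$$
\mathcal I_{1,2}(\varepsilon,0)(\vartheta)=\frac{\varepsilon^{2\alpha-1}}{2\pi}\,\partial_\vartheta\sum_{j=1}^{\mathbf m}\log\bigl|1-\varepsilon^\alpha e^{i(\phi_j-\vartheta)}\bigr|.
$$
I then expand $\log|1-a|=-\operatorname{Re}\sum_{n\ge1}a^n/n$, which is valid for $|a|=\varepsilon^\alpha<1$. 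The roots-of-unity identity $\sum_{j}e^{in\phi_j}=\mathbf m\,e^{in\pi\sigma/\mathbf m}$ if $\mathbf m\mid n$, and $0$ otherwise, removes every harmonic except the multiples of $\mathbf m$. Hence
$$
\sum_{j}\log\bigl|1-\varepsilon^\alpha e^{i(\phi_j-\vartheta)}\bigr|=-\operatorname{Re}\sum_{k\ge1}\frac{\varepsilon^{\alpha k\mathbf m}}{k}\,e^{ik\pi\sigma}e^{-ik\mathbf m\vartheta}.
$$
Differentiating term by term, which is justified because the series converges geometrically, and bounding uniformly in $\vartheta$, I obtain
$$
\|\mathcal I_{1,2}(\varepsilon,0)\|_{L^\infty}\le C\,\varepsilon^{2\alpha-1}\sum_{k\ge1}\mathbf m\,\varepsilon^{\alpha k\mathbf m}\le C'\varepsilon^{\alpha(\mathbf m+2)-1}.
$$
The same bound holds for derivatives, so it also holds in Hölder norms. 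The exponent $\alpha(\mathbf m+2)-1$ is positive precisely when $\alpha>\frac1{\mathbf m+2}$, so this term tends to zero as $\varepsilon\to0^+$.

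Combining the three terms gives $\mathcal F_1^\sigma(0,\Lambda,0)(\vartheta)=0$ for every $\vartheta$ and every $\Lambda$, for any $\sigma$. I expect the main obstacle to be the cancellation step. One has to see that the naive size of the interaction, $\varepsilon^{2\alpha-1}\cdot\varepsilon^{\alpha}$, which blows up for $\alpha<1/3$, is in fact reduced to $\varepsilon^{2\alpha-1}\cdot\varepsilon^{\alpha\mathbf m}$. This reduction comes from the $\mathbf m$-fold symmetry, which kills the harmonics $n=1,\dots,\mathbf m-1$ of the polygon's multipole expansion.
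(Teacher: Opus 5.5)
Your proposal is correct and follows the paper's proof: $\mathcal S_1(\varepsilon,0)$ vanishes, each inner disk is replaced by a point mass at its center (you use the mean value property, the paper uses Lemma \ref{integral}), and the $\mathbf m$-fold symmetry reduces the interaction term to size $C\varepsilon^{(\mathbf m+2)\alpha-1}$. The only difference is cosmetic: the paper obtains a closed form from the product identity $\prod_{j}(z-\varepsilon^\alpha e^{i\phi_j})=z^{\mathbf m}-\varepsilon^{\mathbf m\alpha}e^{i\pi\sigma}$, whereas you expand the logarithm and use orthogonality of the roots of unity, which is the same computation written as a series.
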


\begin{proof}
{Fix $\Lambda \in \R$ and $\vartheta \in \T$.} For $f=0$, one has $w_1=w_2=1$ and
$$
\mathcal   F_1^\sigma(\varepsilon, \Lambda, 0)(\vartheta)  =    \mathcal S_1(\varepsilon,0)+ \mathcal I_{1,2}(\varepsilon,0),
$$
where
\begin{align*}
       \mathcal S_1(\varepsilon,0)&=-\frac{1}{\varepsilon^{1-2\alpha}}  \frac{\partial_\vartheta}{2\pi} \int_{0}^{2\pi} \int_{0}^{1} \log \left|e^{i \vartheta} - r e^{i \eta} \right| \, r \, dr \, d \eta \\
        \mathcal I_{1,2}(\varepsilon,0)&=\frac{1}{\varepsilon^{1-2\alpha}}\frac{\partial_\vartheta}{2\pi^2} \sum_{j = 1}^{\mathbf{m}} \int_{0}^{2\pi} \int_{0}^{1} \log \left|e^{i\vartheta} - e^{i \phi_j} \left(\varepsilon r e^{i \eta} + \varepsilon^\alpha\right) \right| \, r \, dr \, d\eta,
    \end{align*}
    where
    $$
    \phi_j
    :=
    \frac{\pi(2j-2+\sigma)}{\mathbf m},
    \qquad
    j\in\{1,\ldots,\mathbf m\}.
$$
    Using Lemma \ref{integral}
we can eliminate the dependence on the radius of each inner disk,
    \begin{align*}
       -\varepsilon^{1-2\alpha} \mathcal S_1(\varepsilon,0)(\vartheta) &=\frac{1}{2\pi} \int_{0}^{2\pi} \int_{0}^{1} \log \left|e^{i \vartheta} - r e^{i \eta} \right| \, r \, dr \, d \eta\\ & =  \frac{1}{2\pi} \int_{0}^{2\pi} \int_{0}^{1} \log\left| 1 - r e^{i(\eta - \vartheta)}\right| \, r \, dr \, d \eta\\ 
        & = \frac{1}{2\pi} \int_{0}^{1} \int_{0}^{2\pi} \log\left|1 - re^{i\tau}\right| \, d\tau \, r \, dr  = 0,
    \end{align*}
    As for the second integral one may write
    \begin{align*}
        \varepsilon^{1-2\alpha} \mathcal I_{1,2}(\varepsilon,0)(\vartheta) &=\frac{1}{2\pi} \sum_{j = 1}^{\mathbf{m}} \partial_\vartheta \log\left|e^{i \vartheta} - e^{i \phi_j} \varepsilon^\alpha \right|\\ &\quad + \frac{\partial_\vartheta}{2\pi^2} \sum_{j = 1}^{\mathbf{m}} \int_{0}^{2\pi} \int_{0}^{1} \log \left|1 - \frac{e^{i \phi_j} \varepsilon r }{e^{i \vartheta} - e^{i \phi_j}\varepsilon^\alpha} e^{i\eta}\right| \, r \, dr \, d\eta .
    \end{align*}
For all $r\in[0,1]$ and for  sufficiently small
$\varepsilon>0$ one has
$$
\left|\frac{e^{i \phi_j} \varepsilon r }{e^{i \vartheta} - e^{i \phi_j}\varepsilon^\alpha} \right|\leq \frac{\varepsilon}{1 - \varepsilon^\alpha}<1.
$$
From Lemma \ref{integral} it follows that
$$
\varepsilon^{1-2\alpha} \mathcal I_{1,2}(\varepsilon,0)(\vartheta)
=
\frac{1}{2\pi}
\partial_\vartheta
\sum_{j=1}^{\mathbf m}
\log\left|
e^{i\vartheta}
-
\varepsilon^\alpha
e^{i\phi_j}
\right|.
$$
The numbers
$$
    e^{i\phi_j},
    \qquad j\in \{1,\ldots,\mathbf m\},
$$
are the roots of $z^{\mathbf m}=e^{i\pi\sigma}$. Consequently,
$$
\prod_{j=1}^{\mathbf m}
\left(
z-
\varepsilon^\alpha
e^{i\phi_j}
\right)
=
z^{\mathbf m}
-
\varepsilon^{\mathbf m\alpha}e^{i\pi\sigma}.
$$
Taking $z=e^{i\vartheta}$ gives
$$
\varepsilon^{1-2\alpha} \mathcal I_{1,2}(\varepsilon,0)(\vartheta)
=
\frac{1}{2\pi}
\partial_\vartheta
\log\left|
e^{i\mathbf m\vartheta}
-
\varepsilon^{\mathbf m\alpha}e^{i\pi\sigma}
\right|.
$$
Since
$$
\left|
e^{i\mathbf m\vartheta}
-
\varepsilon^{\mathbf m\alpha}e^{i\pi\sigma}
\right|^2
=
1
-
2\varepsilon^{\mathbf m\alpha}
\cos(\mathbf m\vartheta-\pi\sigma)
+
\varepsilon^{2\mathbf m\alpha},
$$
we finally obtain
$$
\varepsilon^{1-2\alpha} \mathcal I_{1,2}(\varepsilon,0)(\vartheta)
=
\frac{\mathbf m}{2\pi}
\varepsilon^{\mathbf m\alpha}
\frac{
    \sin(\mathbf m\vartheta-\pi\sigma)
}{
    1
    -
    2\varepsilon^{\mathbf m\alpha}
    \cos(\mathbf m\vartheta-\pi\sigma)
    +
    \varepsilon^{2\mathbf m\alpha}
}.
$$
Therefore,
$$
\mathcal{F}_1^\sigma(\varepsilon,\Lambda,0)(\vartheta)
=
\frac{\mathbf m}{2\pi}
\varepsilon^{(\mathbf m+2)\alpha-1}
\frac{
    \sin(\mathbf m\vartheta-\pi\sigma)
}{
    1
    -
    2\varepsilon^{\mathbf m\alpha}
    \cos(\mathbf m\vartheta-\pi\sigma)
    +
    \varepsilon^{2\mathbf m\alpha}
}.
$$
In particular,
$$
    \left\|\mathcal{F}_1^\sigma(\varepsilon,\Lambda,0)\right\|_{L^\infty}
    \leq
    C_{\mathbf m}
    \varepsilon^{(\mathbf m+2)\alpha-1}.
$$
Thus
$$
 \mathcal  F_1^\sigma(0,\Lambda,0){(\vartheta)}=0,
$$
whenever
$$
    \alpha>\frac{1}{\mathbf m+2}.
$$
  That ends the proof of this proposition.
\end{proof}

We next analyze the second component of the nonlinear operator. The leading
singular term is canceled by choosing $\Omega_0$ equal to the angular
velocity coefficient of a regular polygon of $\mathbf m$ identical point
vortices. An exact computation of the inner--inner interaction also reveals
an exceptional cancellation in the pentagonal case $\mathbf m=5$.

\begin{pro}
\label{F2_at_0}
Fix a phase $\sigma\in\mathbb R$, and let
$$
    \Omega_0=\frac{1-\mathbf m}{4\pi}.
$$
Then the following assertions hold.

\begin{enumerate}
    \item If $\mathbf m\neq5$ and
    $$
        0<\alpha<\frac12,
    $$
    then, for every $\Lambda\in\mathbb R$,
    $$
       \mathcal  F_2^\sigma(0,\Lambda,0)(\vartheta)
        =
        -\Lambda\sin\vartheta,
        \qquad \vartheta\in\mathbb T.
    $$

    \item If $\mathbf m=5$ and
    $$
        0<\alpha<\frac23,
    $$
    then the same conclusion holds:
    $$
       \mathcal  F_2^\sigma(0,\Lambda,0)(\vartheta)
        =
        -\Lambda\sin\vartheta,
        \qquad \vartheta\in\mathbb T.
    $$
\end{enumerate}
\end{pro}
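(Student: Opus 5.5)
The plan is to set $f=0$, so that $w_1=w_2=1$ and $f_2'=0$, in the expanded formula \eqref{F22}, compute each of the four contributions exactly (or by an exact convergent series), and let $\varepsilon\to0^+$. With $f=0$ the first bracket in \eqref{F22} reduces to
$$-(\Omega_0+\varepsilon^\alpha\Lambda)\,\varepsilon^{-\alpha}\sin\vartheta=-\Omega_0\varepsilon^{-\alpha}\sin\vartheta-\Lambda\sin\vartheta.$$
The term $-\Lambda\sin\vartheta$ is the claimed limit. The singular term $-\Omega_0\varepsilon^{-\alpha}\sin\vartheta$ must be cancelled exactly by the inner--inner interaction $\mathcal I_{2,2}(\varepsilon,0)$, and this cancellation is what fixes $\Omega_0$.

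\textbf{Self and outer terms.} For the self-interaction $\mathcal S_2(\varepsilon,0)$, the same computation as for $\mathcal S_1$ in Proposition \ref{F1_at_0}, based on Lemma \ref{integral}, shows that the double integral does not depend on $\vartheta$, so its derivative vanishes. For the outer interaction $\mathcal I_{2,1}(\varepsilon,0)$, I use that the point
$$p(\vartheta)=e^{i\frac{\pi\sigma}{\mathbf m}}\bigl(\varepsilon e^{i\vartheta}+\varepsilon^\alpha\bigr)$$
lies inside the unit disk. Splitting the radial integral at $r=|p|$ and applying Lemma \ref{integral} gives the classical Rankine formula
$$\frac1{2\pi}\int_{\mathbb D}\log|p-y|\,dy=\frac{|p|^2-1}{4}.$$
Since $|p|^2=\varepsilon^2+2\varepsilon^{1+\alpha}\cos\vartheta+\varepsilon^{2\alpha}$, this yields
$$\mathcal I_{2,1}(\varepsilon,0)=\tfrac12\varepsilon^\alpha\sin\vartheta,$$
which tends to $0$ for every $\alpha>0$.

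\textbf{Inner--inner term.} By Lemma \ref{integral}, exactly as in Proposition \ref{F1_at_0}, each disk $j\ge2$ acts on the reference boundary as a point vortex. The ratio involved is of size $\varepsilon/|1-\omega_j|\,\varepsilon^{-\alpha}$, which is small. Writing $\omega_j:=e^{i\frac{2\pi(j-1)}{\mathbf m}}$ and factoring out $\varepsilon^\alpha(1-\omega_j)$, which only adds a $\vartheta$-independent constant, I obtain
$$\mathcal I_{2,2}(\varepsilon,0)=\frac{1}{2\pi\varepsilon}\partial_\vartheta\sum_{j=2}^{\mathbf m}\log\Bigl|1+\frac{\varepsilon^{1-\alpha}e^{i\vartheta}}{1-\omega_j}\Bigr|.$$
Expanding the logarithm in its power series, which converges uniformly in $\vartheta$ together with all derivatives because $\varepsilon^{1-\alpha}\to0$, gives
$$\mathcal I_{2,2}(\varepsilon,0)=\frac{1}{2\pi}\sum_{n\ge1}(-1)^{n+1}\varepsilon^{n(1-\alpha)-1}\,\partial_\vartheta\mathrm{Re}\bigl(c_n e^{in\vartheta}\bigr),\qquad c_n:=\frac1n\sum_{j=2}^{\mathbf m}(1-\omega_j)^{-n}.$$

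\textbf{The roots-of-unity sums (main obstacle).} The key step is to evaluate $c_1$ and $c_2$. The numbers $\omega_j$, $j\ge2$, are the roots of $1+z+\dots+z^{\mathbf m-1}$, so the $1-\omega_j$ are the roots of an explicit polynomial. Newton's identities, or the partial-fraction expansion of $\frac{d}{dz}\log\frac{z^{\mathbf m}-1}{z-1}$ at $z=1$, give
$$c_1=\frac{\mathbf m-1}{2},\qquad \sum_{j\ge2}(1-\omega_j)^{-2}=-\frac{(\mathbf m-1)(\mathbf m-5)}{12}.$$
The $n=1$ term is therefore $-\frac{\mathbf m-1}{4\pi}\varepsilon^{-\alpha}\sin\vartheta$, which cancels $-\Omega_0\varepsilon^{-\alpha}\sin\vartheta$ exactly when $\Omega_0=\frac{1-\mathbf m}{4\pi}$.

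\textbf{Size of the remainder.} The $n\ge2$ tail is bounded by $C\varepsilon^{1-2\alpha}$, which tends to $0$ when $\alpha<\frac12$. When $\mathbf m=5$ the coefficient $c_2$ vanishes, so the tail starts at $n=3$ and is $O(\varepsilon^{2-3\alpha})$, which tends to $0$ when $\alpha<\frac23$. Collecting all contributions gives $\mathcal F_2^\sigma(0,\Lambda,0)=-\Lambda\sin\vartheta$, uniformly in $\vartheta$ and independently of $\sigma$. The phase $\sigma$ drops out because the outer term depends only on $|p|$ and the inner--inner term only on relative positions.
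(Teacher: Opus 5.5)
Your proposal is correct and follows essentially the paper's proof. You use the same four-term decomposition, the same exact evaluation of the self- and outer contributions ($0$ and $\tfrac12\varepsilon^\alpha\sin\vartheta$), the same reduction of the disks $j\ge2$ to point vortices, and an expansion in $s=\varepsilon^{1-\alpha}e^{i\vartheta}$ in which the $\varepsilon^{-\alpha}\sin\vartheta$ harmonic cancels and the $\sin(2\vartheta)$ coefficient vanishes for $\mathbf m=5$. The only cosmetic difference is how the sum over $j$ is handled. The paper uses $\prod_{j\ge2}(1+s-\omega_j)=((1+s)^{\mathbf m}-1)/s$ to collapse it into the closed form $\mathbf m s(1+s)^{\mathbf m-1}/((1+s)^{\mathbf m}-1)$, whereas you expand each logarithm and compute the power sums $\sum_{j\ge2}(1-\omega_j)^{-n}$. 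Those power sums are exactly the Taylor coefficients of that same function, and your explicit geometric tail bound is, if anything, slightly more careful than the paper's $O(s^4)$ remark.
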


\begin{proof}
Fix $\Lambda\in\mathbb R$ and $\vartheta\in\mathbb T$. Throughout the proof,
$\varepsilon>0$ is assumed to be sufficiently small. At $f=0$, one has 
   $ w_1=w_2=1.$
Using the expression of $\mathcal{F}_2^\sigma$, given by \eqref{F22}, we obtain
\begin{align}
\label{F2_at_trivial}
\mathcal  F_2^\sigma(\varepsilon,\Lambda,0)(\vartheta)
&=
-\varepsilon^{-\alpha}\Omega_0\sin\vartheta
-\Lambda\sin\vartheta+\mathcal S_2(\varepsilon,0)(\vartheta)
+
\mathcal I_{2,1}(\varepsilon,0)(\vartheta)
+  \mathcal I_{2,2}(\varepsilon,0)(\vartheta),
\end{align}
where
\begin{align*}
\mathcal S_2(\varepsilon,0)(\vartheta)=
\frac{\partial_\vartheta}{2\pi^2 \varepsilon}
\int_0^{2\pi}\int_0^1
\log\left| e^{i\vartheta}
- r e^{i\eta}\right|
r\,dr\,d\eta.
\end{align*}
\begin{align*}
\mathcal I_{2,1}(\varepsilon,0)(\vartheta)
=
-\frac{\partial_\vartheta}{2\pi \varepsilon}
\int_0^{2\pi}\int_0^1
\log\left|
e^{i\frac{\pi\sigma}{\mathbf m}}
\left(
    \varepsilon e^{i\vartheta}
    +\varepsilon^\alpha
\right)
-r e^{i\eta}
\right|
r\,dr\,d\eta,
\end{align*}
and
\begin{align*}
\mathcal I_{2,2}(\varepsilon,0)(\vartheta)
=
\frac{\partial_\vartheta}{2\pi^2 \varepsilon}
\sum_{j=2}^{\mathbf m}
\int_0^{2\pi}\int_0^1
\log\left|
\varepsilon e^{i\vartheta}
+\varepsilon^\alpha
-
e^{i\frac{2\pi(j-1)}{\mathbf m}}
\left(
    \varepsilon r e^{i\eta}
    +\varepsilon^\alpha
\right)
\right|
r\,dr\,d\eta.
\end{align*}
First, notice that, similarly to $\mathcal S_1(\varepsilon,0)(\vartheta)$, by  Lemma \ref{integral}, we get
\begin{equation}
\label{J1_exact}
  \mathcal S_2(\varepsilon,0)(\vartheta)
    =0.
\end{equation}
Let's now compute the outer-domain contribution $\mathcal I_{2,1}(\varepsilon,0)(\vartheta)$. Note that, by setting $z = r e^{i \eta}$, one has $dA(z) = r \, dr \, d \eta$; moreover, the integration domain 
    $$\left\{(r, \eta) : 0 \leq r \leq 1, \, 0 \leq \eta \leq 2\pi\right\},$$
    coincides with the unit disk $\D \subseteq \C$. Hence, we may rewrite the integral as
    $$\varepsilon\, \mathcal I_{2,1}(\varepsilon,0)(\vartheta)= - \frac{\partial_\vartheta}{2\pi} \int_{\D} \log\left|z - a_{\varepsilon}^{\sigma}(\vartheta)\right| dA(z), \quad\textnormal{with}\quad  a_{\varepsilon}^{\sigma}(\vartheta)
    :=
    e^{i\frac{\pi\sigma}{\mathbf m}}
    \left(
        \varepsilon e^{i\vartheta}
        +\varepsilon^\alpha
    \right).$$
Since
$$
    \left|a_{\varepsilon}^{\sigma}(\vartheta)\right|
    =
    \left|
        \varepsilon e^{i\vartheta}
        +\varepsilon^\alpha
    \right|
    \leq
    \varepsilon+\varepsilon^\alpha<1,
$$
for sufficiently small $\varepsilon>0$, the point
$a_{\varepsilon}^{\sigma}(\vartheta)$ belongs to the unit disk. By the
logarithmic-potential formula for the unit disk in Lemma \ref{integral_disk}, we obtain
\begin{align*}
\varepsilon\, \mathcal I_{2,1}(\varepsilon,0)(\vartheta)
&=
-\frac{\partial_\vartheta}{2\pi}
\int_{\mathbb D}
\log\left|
z-a_{\varepsilon}^{\sigma}(\vartheta)
\right|
\,dA(z)
=
-\frac14
\partial_\vartheta
\left(
    \left|
        \varepsilon e^{i\vartheta}
        +\varepsilon^\alpha
    \right|^2
    -1
\right).
\end{align*}
Since
$$
    \left|
        \varepsilon e^{i\vartheta}
        +\varepsilon^\alpha
    \right|^2
    =
    \varepsilon^2
    +\varepsilon^{2\alpha}
    +2\varepsilon^{\alpha+1}\cos\vartheta,
$$
we obtain the exact identity
\begin{equation}
\label{exact_I1_F2}
     \mathcal I_{2,1}(\varepsilon,0)(\vartheta)
    =
    \frac{\varepsilon^{\alpha}}{2}
    \sin\vartheta.
\end{equation}

Concerning $ \mathcal I_{2,2}(\varepsilon,0)(\vartheta)$, we write 
$$
 \mathcal I_{2,2}(\varepsilon,0)(\vartheta)=\frac{\partial_\vartheta}{2\pi^2\varepsilon}
\sum_{j=2}^{\mathbf m} J_j(\varepsilon,\vartheta),
$$
where 
\begin{align*}
J_j(\varepsilon,\vartheta)
:=
\int_0^{2\pi}\int_0^1
\log\left|
\varepsilon e^{i\vartheta}
+\varepsilon^\alpha
-
e^{i\frac{2\pi(j-1)}{\mathbf m}}
\left(
    \varepsilon r e^{i\eta}
    +\varepsilon^\alpha
\right)
\right|
r\,dr\,d\eta.
\end{align*}
Let $j\in\{2,\ldots,\mathbf m\}$. The source integration domain is the
disk of radius $\varepsilon$ centered at
$$
    \varepsilon^\alpha
    e^{i\frac{2\pi(j-1)}{\mathbf m}}.
$$
The distance between the target point
$\varepsilon e^{i\vartheta}+\varepsilon^\alpha$ and this center satisfies
\begin{align*}
&
\left|
\varepsilon e^{i\vartheta}
+
\varepsilon^\alpha
-
\varepsilon^\alpha
e^{i\frac{2\pi(j-1)}{\mathbf m}}
\right|
\geq
\varepsilon^\alpha
\left|
1-e^{i\frac{2\pi(j-1)}{\mathbf m}}
\right|
-\varepsilon.
\end{align*}
Since $\alpha<1$, one has
$\varepsilon^{\alpha-1}\to+\infty$ as $\varepsilon\to0^+$. As there are only
finitely many indices $j\in\{2,\ldots,\mathbf m\}$, it follows that, for
sufficiently small $\varepsilon>0$,
$$
\left|
\varepsilon e^{i\vartheta}
+
\varepsilon^\alpha
-
\varepsilon^\alpha
e^{i\frac{2\pi(j-1)}{\mathbf m}}
\right|
>\varepsilon,
$$
uniformly in $\vartheta$ and $j$.
Consequently, the target point lies outside the corresponding closed source
disk. The logarithmic kernel is therefore harmonic throughout that disk, and
the mean-value property yields
\begin{equation}
\label{Jj_exact}
\begin{aligned}
J_j(\varepsilon,\vartheta)
&=
\pi
\log\left|
\varepsilon e^{i\vartheta}
+
\varepsilon^\alpha
\left(
    1-e^{i\frac{2\pi(j-1)}{\mathbf m}}
\right)
\right|, \qquad j\in\{2,\ldots,\mathbf m\}.
\end{aligned}
\end{equation}
Combining \eqref{J1_exact} and \eqref{Jj_exact}, we obtain
\begin{align}
\label{I2_reduced}
\varepsilon \mathcal I_{2,2}(\varepsilon,0)(\vartheta)
&=
\frac{1}{2\pi}
\partial_\vartheta
\Bigg[
    \log\varepsilon
    +
    \sum_{j=2}^{\mathbf m}
    \log\left|
        \varepsilon e^{i\vartheta}
        +
        \varepsilon^\alpha
        \left(
            1-e^{i\frac{2\pi(j-1)}{\mathbf m}}
        \right)
    \right|
\Bigg]
\nonumber\\
&=
\frac{1}{2\pi}
\partial_\vartheta
\sum_{j=2}^{\mathbf m}
\log\left|
1+\varepsilon^{1-\alpha}e^{i\vartheta}
-e^{i\frac{2\pi(j-1)}{\mathbf m}}
\right|.
\end{align}
All factors depending only on $\varepsilon$ disappear after differentiation
with respect to $\vartheta$.
Set
$$
    s
    :=
    \varepsilon^{1-\alpha}e^{i\vartheta}.
$$
The numbers
$$
    e^{i\frac{2\pi(j-1)}{\mathbf m}},
    \qquad j\in\{1,\ldots,\mathbf m\},
$$
are the $\mathbf m$-th roots of unity. Therefore,
$$
    \prod_{j=1}^{\mathbf m}
    \left(
        z-e^{i\frac{2\pi(j-1)}{\mathbf m}}
    \right)
    =
    z^{\mathbf m}-1.
$$
Removing the factor corresponding to $j=1$, for which the root is $1$, gives
$$
    \prod_{j=2}^{\mathbf m}
    \left(
        z-e^{i\frac{2\pi(j-1)}{\mathbf m}}
    \right)
    =
    \frac{z^{\mathbf m}-1}{z-1}.
$$
Taking $z=1+s$, we obtain
$$
    \prod_{j=2}^{\mathbf m}
    \left(
        1+s-e^{i\frac{2\pi(j-1)}{\mathbf m}}
    \right)
    =
    \frac{(1+s)^{\mathbf m}-1}{s}.
$$
It follows from \eqref{I2_reduced} that
\begin{align*}
\varepsilon\, \mathcal I_{2,2}(\varepsilon,0)(\vartheta)
&=
\frac{1}{2\pi}
\partial_\vartheta
\log\left|
    \frac{(1+s)^{\mathbf m}-1}{s}
\right|.
\end{align*}
Since
$$
    |s|=\varepsilon^{1-\alpha},
$$
is independent of $\vartheta$, one has
$$
    \partial_\vartheta\log|s|=0.
$$
Thus,
$$
\varepsilon\, \mathcal I_{2,2}(\varepsilon,0)(\vartheta)
=
\frac{1}{2\pi}
\partial_\vartheta
\log\left|
    (1+s)^{\mathbf m}-1
\right|.
$$
Using
$$
    \partial_\vartheta s=is,
$$
we obtain the exact identity
\begin{equation}
\label{exact_I2_F2}
\varepsilon\, \mathcal I_{2,2}(\varepsilon,0)(\vartheta)
=
\frac{1}{2\pi}
\operatorname{Re}
\left[
    \frac{
        i\mathbf m s(1+s)^{\mathbf m-1}
    }{
        (1+s)^{\mathbf m}-1
    }
\right],
\qquad
s=\varepsilon^{1-\alpha}e^{i\vartheta}.
\end{equation}
Substituting \eqref{exact_I1_F2} and \eqref{exact_I2_F2} into
\eqref{F2_at_trivial}, and dividing by $\varepsilon$, we obtain
\begin{align*}
\mathcal  F_2^\sigma(\varepsilon,\Lambda,0)(\vartheta)
&=
-\left[\Lambda
+
\frac{\Omega_0}{\varepsilon^\alpha}
-
\frac{\varepsilon^\alpha}{2}\right]\sin\vartheta
+
\frac{\varepsilon^{-\alpha}}{2\pi }
\operatorname{Re}
\left[
    \frac{
        i\mathbf m e^{i\vartheta}(1+\varepsilon^{1-\alpha}e^{i\vartheta})^{\mathbf m-1}
    }{
        (1+\varepsilon^{1-\alpha}e^{i\vartheta})^{\mathbf m}-1
    }
\right].
\end{align*}

Let us now expand the exact formula of $\mathcal F_2^\sigma(\varepsilon,\Lambda,0)$.
Define
$$
    G_{\mathbf m}(s)
    :=
    \frac{
        \mathbf m s(1+s)^{\mathbf m-1}
    }{
        (1+s)^{\mathbf m}-1
    }.
$$
The singularity of $G_{\mathbf m}$ at $s=0$ is removable, and
$G_{\mathbf m}(0)=1$. Hence $G_{\mathbf m}$ is analytic in a neighborhood of
the origin.
To compute its first coefficients, observe that
\begin{align*}
\frac{(1+s)^{\mathbf m}-1}{\mathbf m s}
&=
1
+
\frac{\mathbf m-1}{2}s
+
\frac{(\mathbf m-1)(\mathbf m-2)}{6}s^2
+
\frac{(\mathbf m-1)(\mathbf m-2)(\mathbf m-3)}{24}s^3
+
O\left(s^4\right),
\end{align*}
whereas
\begin{align*}
(1+s)^{\mathbf m-1}
&=
1
+
(\mathbf m-1)s
+
\frac{(\mathbf m-1)(\mathbf m-2)}{2}s^2
+
\frac{(\mathbf m-1)(\mathbf m-2)(\mathbf m-3)}{6}s^3
+
O\left(s^4\right).
\end{align*}
Dividing these two expansions gives
\begin{equation}
\label{expansion_Gm}
\begin{aligned}
G_{\mathbf m}(s)
&=
1
+
\frac{\mathbf m-1}{2}s
+
\frac{(\mathbf m-1)(\mathbf m-5)}{12}s^2
-
\frac{(\mathbf m-1)(\mathbf m-3)}{8}s^3
+
O\left(s^4\right).
\end{aligned}
\end{equation}
The remainder is uniform for $s$ in a sufficiently small fixed disk. Since
$$
    \operatorname{Re}\left(is^k\right)
    =
    -\varepsilon^{k(1-\alpha)}
    \sin(k\vartheta),
$$
equations \eqref{exact_I2_F2} and \eqref{expansion_Gm} imply
\begin{align*}
\mathcal  F_2^\sigma(\varepsilon,\Lambda,0)(\vartheta)
&=
-\left[\Lambda
+
\frac{1}{\varepsilon^\alpha}
\left(
    \Omega_0+\frac{\mathbf m-1}{4\pi}
\right)
-
\frac{\varepsilon^\alpha}{2}\right]\sin\vartheta
\\
&\quad +
\frac{(\mathbf m-1)(5-\mathbf m)}{24\pi}
\varepsilon^{1-2\alpha}\sin(2\vartheta)
+
O\left(
    \varepsilon^{2-3\alpha}
\right).
\end{align*}
With the choice
$$
    \Omega_0
    =
    \frac{1-\mathbf m}{4\pi},
$$
the singular first harmonic cancels. Hence
\begin{align}
\label{F2_m_not_5}
\mathcal  F_2^\sigma(\varepsilon,\Lambda,0)(\vartheta)
&=
-\Lambda\sin\vartheta
+
\frac{\varepsilon^\alpha}{2}\sin\vartheta
+
\frac{(\mathbf m-1)(5-\mathbf m)}{24\pi}
\varepsilon^{1-2\alpha}\sin(2\vartheta)
+
O\left(
    \varepsilon^{2-3\alpha}
\right).
\end{align}
If $\mathbf m\neq5$, the coefficient of the second harmonic is nonzero.
Under the assumption $0<\alpha<1/2$, one has
$$
    1-2\alpha>0,
    \qquad
    2-3\alpha>0.
$$
Therefore, all the terms following $-\Lambda\sin\vartheta$ in
\eqref{F2_m_not_5} converge uniformly to zero, and hence
$$
   \mathcal  F_2^\sigma(0,\Lambda,0)(\vartheta)
    =
    -\Lambda\sin\vartheta.
$$
When $\mathbf m=5$,  we obtain
\begin{align*}
\mathcal  F_2^\sigma(\varepsilon,\Lambda,0)(\vartheta)
&=
-\Lambda\sin\vartheta
+
\frac{\varepsilon^\alpha}{2}\sin\vartheta
+
\frac{1}{2\pi}
\varepsilon^{2-3\alpha}\sin(3\vartheta)
+
O\left(
    \varepsilon^{3-4\alpha}
\right).
\end{align*}
If $\alpha<2/3$, then
$$
   \mathcal  F_2^\sigma(0,\Lambda,0)(\vartheta)
    =
    -\Lambda\sin\vartheta.
$$
This completes the proof.
\end{proof}
\begin{rem}
The upper bounds on $\alpha$ obtained in Proposition \ref{F2_at_0} are sharp
for the present limiting configuration.
If $\mathbf m\neq5$ and $\alpha=\frac12$, then
$$
\lim_{\varepsilon\to0^+}
\mathcal  F_2^\sigma(\varepsilon,\Lambda,0)(\vartheta)
=
-\Lambda\sin\vartheta
+
\frac{(\mathbf m-1)(5-\mathbf m)}{24\pi}
\sin(2\vartheta).
$$
Thus the desired limiting identity fails at the critical exponent. For
$\alpha>\frac12$, the second harmonic is unbounded as
$\varepsilon\to0^+$.
If $\mathbf m=5$ and $\alpha=\frac23$, then
$$
\lim_{\varepsilon\to0^+}
\mathcal  F_2^\sigma(\varepsilon,\Lambda,0)(\vartheta)
=
-\Lambda\sin\vartheta
+
\frac{1}{2\pi}\sin(3\vartheta).
$$
For $\alpha>\frac23$, the third harmonic is unbounded. Hence the exceptional
pentagonal range $\alpha<\frac23$ is also sharp.
\end{rem}

\begin{cor}
\label{cor_trivial_sol}
Fix $\sigma\in[0,2)$, and set
$$
    \Omega_0
    =
    \frac{1-\mathbf m}{4\pi}.
$$
Assume that one of the following conditions holds:
$$
    \mathbf m\neq5
    \quad\text{and}\quad
    \frac{1}{\mathbf m+2}
    <
    \alpha
    <
    \frac12,
$$
or
$$
    \mathbf m=5
    \quad\text{and}\quad
    \frac17
    <
    \alpha
    <
    \frac23.
$$
Then, for every $\Lambda\in\mathbb R$,
$$
   \mathcal  F^\sigma(0,\Lambda,0)(\vartheta)
    =
    \bigl(
        0,
        -\Lambda\sin\vartheta
    \bigr), \qquad {\vartheta\in\mathbb T}.
$$
In particular,
$$
   \mathcal  F^\sigma(0,0,0){(\vartheta)}=0, \qquad {\vartheta\in\mathbb T}.
$$
\end{cor}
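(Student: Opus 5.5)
The corollary is a direct combination of Propositions \ref{F1_at_0} and \ref{F2_at_0}, so the plan is simply to check that the stated ranges of $\alpha$ fall within the hypotheses of both propositions. Then we read off the two components of $\mathcal F^\sigma(0,\Lambda,0)$, understood, as in this section, as the one-sided limits $\varepsilon\to0^+$ at $f=0$.

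First, for the first component, we verify that in both cases the lower bound matches the condition $\alpha>\frac{1}{\mathbf m+2}$ of Proposition \ref{F1_at_0}. In the case $\mathbf m\neq5$ this is assumed verbatim. In the case $\mathbf m=5$ we have $\frac{1}{\mathbf m+2}=\frac17$, so the hypothesis $\alpha>\frac17$ is exactly the required one. Since Proposition \ref{F1_at_0} holds for every real phase $\sigma$, in particular for $\sigma\in[0,2)$, it yields $\mathcal F_1^\sigma(0,\Lambda,0)\equiv0$ for every $\Lambda\in\mathbb R$. Indeed, the explicit formula there shows $\|\mathcal F_1^\sigma(\varepsilon,\Lambda,0)\|_{L^\infty}\lesssim\varepsilon^{(\mathbf m+2)\alpha-1}\to0$.

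Second, for the second component, we use the choice $\Omega_0=\frac{1-\mathbf m}{4\pi}$ and the upper bounds. If $\mathbf m\neq5$, then $\alpha<\frac12$ and $\alpha>0$ trivially, so item (1) of Proposition \ref{F2_at_0} gives $\mathcal F_2^\sigma(0,\Lambda,0)(\vartheta)=-\Lambda\sin\vartheta$. If $\mathbf m=5$, then $0<\alpha<\frac23$, and item (2) gives the same identity. We also observe that the hypothesis $\alpha<1$, used inside that proof to separate the inner disks, holds in both regimes.

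Putting the two components together gives $\mathcal F^\sigma(0,\Lambda,0)=(0,-\Lambda\sin\vartheta)$, and setting $\Lambda=0$ yields $\mathcal F^\sigma(0,0,0)=0$. There is no genuine obstacle. The only point requiring care is the bookkeeping that the two admissible intervals are nonempty, namely $\frac{1}{\mathbf m+2}<\frac12$ for $\mathbf m\ge2$ and $\frac17<\frac23$, and that each endpoint condition is inherited from the correct proposition.
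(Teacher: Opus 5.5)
Your proposal is correct and follows the paper's own proof, which also just says the corollary is immediate from Proposition~\ref{F1_at_0} (the lower bound $\alpha>\frac{1}{\mathbf m+2}$, equal to $\frac17$ when $\mathbf m=5$) and Proposition~\ref{F2_at_0} (item (1) or (2) according to whether $\mathbf m\neq5$ or $\mathbf m=5$). Your check of the hypotheses is complete, and nothing further is needed.
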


\begin{proof} The proof of the corollary follows immediately from 
 Propositions \ref{F1_at_0} and  \ref{F2_at_0}.
\end{proof}

\section{Functional framework and regularity of the nonlinear operator}\label{sction4}

From now on, we restrict attention to configurations that are symmetric with
respect to the real axis. Accordingly, we assume that
$$
    \overline{\mathcal D_1^\varepsilon}
    =
    \mathcal D_1^\varepsilon,
    \qquad
    \overline{\mathcal D_2^\varepsilon}
    =
    \mathcal D_2^\varepsilon,
$$
The family of inner components is invariant under
complex conjugation if and only if $\sigma\in\mathbb Z$. Consequently, up to
cyclic relabeling, the two reflection-symmetric configurations correspond to
$$
    \sigma=0
    \qquad\text{and}\qquad
    \sigma=1.
$$
The first is the roots-of-unity configuration, whereas the second is obtained
from it by a rotation through the angle $\pi/\mathbf{m}$. 

For each fixed $\sigma\in\{0,1\}$, we denote the corresponding nonlinear
operator by
$$
   \mathcal  F^\sigma
    =
    \bigl(\mathcal F_1^\sigma,\mathcal F_2^\sigma\bigr).
$$
When no confusion can arise, we suppress the superscript $\sigma$.

The existence proof will be based on an application of the implicit function
theorem to the equation
$$
   \mathcal  F^\sigma(\varepsilon,\Lambda,f)=0.
$$
We therefore introduce symmetry-adapted Banach spaces for the boundary
perturbations and establish the regularity properties of the nonlinear
operator on a neighborhood of the trivial configuration.
\subsection{Function spaces and well-posedness of the nonlinear operator}\label{section_spaces}
Let $\nu\in(0,1)$. We define
\begin{align*}
\mathcal X_{1,\mathbf m}
:=
\Bigg\{
    f\in C^{1,\nu}(\mathbb T):
    \;&f(-\vartheta)=f(\vartheta),\;\;
    f\left(\vartheta+\frac{2\pi}{\mathbf m}\right)=f(\vartheta), \;\;\int_0^{2\pi}f(\vartheta)\,d\vartheta=0
\Bigg\},
\end{align*}
and
\begin{align*}
\mathcal X_2
:=
\Bigg\{
    f\in C^{1,\nu}(\mathbb T):
    \;&f(-\vartheta)=f(\vartheta), \;\;\int_0^{2\pi}f(\vartheta)\,d\vartheta=0,\;\;\int_0^{2\pi}f(\vartheta)\cos\vartheta\,d\vartheta=0
\Bigg\}.
\end{align*}
Similarly, we set
\begin{align*}
\mathcal Y_{1,\mathbf m}
:=
\Bigg\{
    g\in C^{0,\nu}(\mathbb T):
    \;&g(-\vartheta)=-g(\vartheta),\;\; g\left(\vartheta+\frac{2\pi}{\mathbf m}\right)=g(\vartheta)
\Bigg\},
\end{align*}
and
$$
\mathcal Y_2
:=
\left\{
    g\in C^{0,\nu}(\mathbb T):
    g(-\vartheta)=-g(\vartheta)
\right\}.
$$
The corresponding Fourier expansions are
\begin{align*}
    f_1(\vartheta)
    &=
    \sum_{k=1}^{\infty}
    f_{1,k}\cos(k\mathbf m\vartheta),
  \qquad
    f_1\in\mathcal X_{1,\mathbf m},
    \\
    f_2(\vartheta)
    &=
    \sum_{n=2}^{\infty}
    f_{2,n}\cos(n\vartheta),
   \qquad
    f_2\in\mathcal X_2,
    \\
    g_1(\vartheta)
    &=
    \sum_{k=1}^{\infty}
    g_{1,k}\sin(k\mathbf m\vartheta),
   \qquad
    g_1\in\mathcal Y_{1,\mathbf m},
    \\
    g_2(\vartheta)
    &=
    \sum_{n=1}^{\infty}
    g_{2,n}\sin(n\vartheta),
   \qquad
    g_2\in\mathcal Y_2,
\end{align*}
where $f_{1,k}$, $f_{2,n}$, $g_{1,k}$ and $g_{2,n}$ are real numbers. It is worth pointing out that the zero-average conditions preserve the areas of the generating domains. The absence of the first cosine mode in $\mathcal X_2$ removes the translational mode from the linearized inner-boundary problem.
We then define
$$
    X
    :=
    \mathcal X_{1,\mathbf m}\times\mathcal X_2,
    \qquad
    Y
    :=
    \mathcal Y_{1,\mathbf m}\times\mathcal Y_2,
$$
endowed with the norms
$$
    \|(f_1,f_2)\|_X
    :=
    \|f_1\|_{C^{1,\nu}}
    +
    \|f_2\|_{C^{1,\nu}},
$$
and
$$
    \|(g_1,g_2)\|_Y
    :=
    \|g_1\|_{C^{0,\nu}}
    +
    \|g_2\|_{C^{0,\nu}}.
$$
These are closed subspaces of the corresponding H\"older spaces and are
therefore Banach spaces.

We denote by $B_X(0,r)$ the ball of radius $r>0$ and center $0$ in $X$,
$$
    B_X(0,r)
    :=
    \left\{
        f\in X:\|f\|_X<r
    \right\}.
$$
Let $r>0$ and $\Lambda_0>0$, and define the open neighborhood
$$
    U
    :=
    (-\Lambda_0,\Lambda_0)\times B_X(0,r)
    \subset \R\times X.
$$
Since the nonlinear operator contains the fractional power
$\varepsilon^\alpha$, we regard $\varepsilon$ as a one-sided parameter and
work on the metric space
$
    [0,\varepsilon_0).
$
We use a continuously parameter-dependent implicit function theorem with a metric parameter
space; more precisely, we apply the specialization to single-valued maps
between Banach spaces of
\cite[Theorem~5F.4, p.~305]{DontchevRockafellar2014}.
This approach does not require extending the nonlinear operator to negative
values of $\varepsilon$.

\begin{rem}[One-sided parameter and auxiliary even extension]\label{remark-IFT}
The parameter $\varepsilon$ has a geometric meaning only for
$\varepsilon\geq0$: it determines the size of the inner components, whereas
$\varepsilon^\alpha$ determines their distance from the origin. Accordingly,
we formulate the implicit-function argument on the one-sided parameter space $
    [0,\varepsilon_0).
$
After constructing the continuous extension of $\mathcal{F}^\sigma$ to
$\varepsilon=0$, one could define an auxiliary even extension by
$$
     F^\sigma(\varepsilon,\Lambda,f)
    :=
    \mathcal F^\sigma(|\varepsilon|,\Lambda,f),
    \qquad
    |\varepsilon|<\varepsilon_0.
$$
This extension does not provide additional physical solutions. Indeed, local
uniqueness in the implicit function theorem implies that the corresponding
solution satisfies
$$
    \Lambda(-\varepsilon)=\Lambda(\varepsilon),
    \qquad
    f(-\varepsilon)=f(\varepsilon).
$$
Hence the parameters $\varepsilon$ and $-\varepsilon$ determine the same
vortex-patch configuration. The two-sided curve is therefore only a symmetric
double parametrization of the physical one-sided branch emerging from the
singular point-vortex configuration.
\end{rem}

Throughout the remainder of this section, for each fixed
$\sigma\in\{0,1\}$, we shall prove that there exists
$\varepsilon_0>0$ sufficiently small such that the nonlinear operator
$   \mathcal  F^\sigma$ can be extended from
    $[0,\varepsilon_0)\times U$ to
     $Y$
as a continuous function in all its variables and continuously Fréchet differentiable with respect to the unknowns $(\Lambda,f)\in U$. 

We begin by verifying that the nonlinear operator $\mathcal F^\sigma$ preserves the
symmetries encoded in the spaces $X$ and $Y$.

\begin{lem}[Symmetries {of $\mathcal{F}^\sigma$}]
\label{lemma:symmetries_F}
Fix $\sigma\in\{0,1\}$, and let
$(\varepsilon, \Lambda, f) \in \R \times \R \times X$. Then
$$
   \mathcal F_j^\sigma(\varepsilon,\Lambda,f)(-\vartheta)
    =
    -\mathcal F_j^\sigma(\varepsilon,\Lambda,f)(\vartheta),
    \qquad j\in\{1,2\}, \qquad {\vartheta \in \T,}
$$
and
$$
   \mathcal  F_1^\sigma(\varepsilon,\Lambda,f)
    \left(\vartheta+\frac{2\pi}{\mathbf m}\right)
    =\mathcal
    F_1^\sigma(\varepsilon,\Lambda,f)(\vartheta), \qquad {\vartheta \in \T,}
$$
\end{lem}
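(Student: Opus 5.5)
The plan is to express each component as $\partial_\vartheta$ of a scalar function $\Phi_j(\vartheta)$ (plus explicit local terms) and to show that $\Phi_j$ is even in $\vartheta$. Then $\partial_\vartheta\Phi_j$ is odd. For the $\mathbf m$-periodicity of $\mathcal F_1^\sigma$, it suffices to show that $\Phi_1$ is $2\pi/\mathbf m$-periodic.

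Write $\mathcal F_1^\sigma=(\Omega_0+\varepsilon^\alpha\Lambda)\varepsilon f_1'-\varepsilon^{2\alpha-1}\partial_\vartheta[\Psi_\varepsilon(w_1(\vartheta)e^{i\vartheta})]$, and similarly for $\mathcal F_2^\sigma$ using \eqref{F2}. The local terms are handled directly. Since $f_j$ is even, $f_j'$ is odd, and $w_j=\sqrt{1+2\varepsilon f_j}$ is even. Hence the terms $f_2'$, $w_2^{-1}f_2'\cos\vartheta$ and $w_2\sin\vartheta$ are all odd. Moreover $f_1'$ is $2\pi/\mathbf m$-periodic because $f_1\in\mathcal X_{1,\mathbf m}$.

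The key step is the symmetry of the stream function. Because $f_1,f_2$ are even, $\overline{\mathcal D_1^\varepsilon}=\mathcal D_1^\varepsilon$ and $\overline{\mathcal D_2^\varepsilon}=\mathcal D_2^\varepsilon$. For $\sigma\in\{0,1\}$, conjugation maps $\mathcal D_{2,j}^{\varepsilon,\sigma}$ to $e^{-i\phi_j}(\varepsilon\mathcal D_2^\varepsilon+\varepsilon^\alpha)$, where $\phi_j=\pi(2j-2+\sigma)/\mathbf m$. Since $-\phi_j\equiv\phi_{j'}\pmod{2\pi}$ with $j'\equiv 2-j-\sigma \pmod{\mathbf m}$, this image is again one of the inner components. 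This is exactly where $\sigma\in\mathbb Z$ is used. Hence $\omega_{0,\varepsilon}^\sigma(\bar x)=\omega_{0,\varepsilon}^\sigma(x)$. Changing variables $y\mapsto\bar y$ in \eqref{Psi_eps_def} and using $|\bar x-\bar y|=|x-y|$ then gives $\Psi_\varepsilon(\bar x)=\Psi_\varepsilon(x)$.

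The rotational invariance shown in the proof of Proposition \ref{reduction_by_symmetry} gives, by the same change of variables, $\Psi_\varepsilon(e^{2\pi i/\mathbf m}x)=\Psi_\varepsilon(x)$. Now take $\Phi_1(\vartheta)=\Psi_\varepsilon(w_1(\vartheta)e^{i\vartheta})$. Evenness of $w_1$ gives $w_1(-\vartheta)e^{-i\vartheta}=\overline{w_1(\vartheta)e^{i\vartheta}}$, so $\Phi_1$ is even. Periodicity of $w_1$ gives $w_1(\vartheta+2\pi/\mathbf m)e^{i(\vartheta+2\pi/\mathbf m)}=e^{2\pi i/\mathbf m}w_1(\vartheta)e^{i\vartheta}$, so $\Phi_1$ is $2\pi/\mathbf m$-periodic. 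Both properties pass to the derivative $\partial_\vartheta\Phi_1$, the first turning into oddness.

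For $\Phi_2(\vartheta)=\Psi_\varepsilon(e^{i\pi\sigma/\mathbf m}(\varepsilon w_2(\vartheta)e^{i\vartheta}+\varepsilon^\alpha))$, the conjugate of the argument at $\vartheta$ is $e^{-i\pi\sigma/\mathbf m}(\varepsilon w_2(-\vartheta)e^{-i\vartheta}+\varepsilon^\alpha)$. This differs from the argument at $-\vartheta$ only by the factor $e^{-2\pi i\sigma/\mathbf m}$, which is an $\mathbf m$-th root of unity when $\sigma\in\{0,1\}$. Combining conjugation invariance with rotational invariance of $\Psi_\varepsilon$ then gives $\Phi_2(-\vartheta)=\Phi_2(\vartheta)$, so $\partial_\vartheta\Phi_2$ is odd.

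The main point of care is the phase bookkeeping for $\sigma=1$. There, conjugation alone does not fix the reference inner component, and the extra rotation $e^{-2\pi i/\mathbf m}$ must be absorbed through the $\mathbf m$-fold invariance of $\Psi_\varepsilon$. One should also note that for $\varepsilon$ negative or in the extended setting the same formulas apply verbatim, with $|\varepsilon|$ as in Remark \ref{remark-IFT}. Equivalently, the argument can be run on the expanded integrals $\mathcal S_j$ and $\mathcal I_{j,k}$ via the substitution $\eta\mapsto-\eta$ together with the reindexing $j\mapsto j'$.
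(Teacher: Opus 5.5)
Your proposal is correct and follows essentially the same route as the paper. It uses the conjugation and $\mathbf m$-fold rotational invariance of $\Psi_\varepsilon$, evenness of $\Psi_\varepsilon\circ z_1$, evenness of the inner-boundary composition via $\gamma_2(-\vartheta)=e^{2\pi i\sigma/\mathbf m}\overline{\gamma_2(\vartheta)}$, and oddness (and $2\pi/\mathbf m$-periodicity) of the local terms. Your explicit reindexing $j'\equiv 2-j-\sigma \pmod{\mathbf m}$ simply makes precise the paper's assertion that complex conjugation permutes the inner components when $\sigma\in\{0,1\}$.
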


\begin{proof}
Since $f_1$ and $f_2$ are even, the generating domains
$\mathcal D_1^\varepsilon$ and $\mathcal D_2^\varepsilon$ are symmetric with
respect to the real axis. Moreover, because $\sigma\in\{0,1\}$, complex
conjugation permutes the family of inner components. Hence the vorticity and
the associated stream function satisfy
$$
    \Psi_\varepsilon\left(\overline z\right)
    =
    \Psi_\varepsilon(z),
$$
and, by $\mathbf m$-fold rotational symmetry,
$$
    \Psi_\varepsilon
    \left(
        e^{i\frac{2\pi}{\mathbf m}}z
    \right)
    =
    \Psi_\varepsilon(z).
$$
Recall that
$$
    z_j(\vartheta)
    =
    w_j(\vartheta)e^{i\vartheta}, \quad j\in\{1,2\}.
$$
For the outer boundary, 
since $w_1$ is even,
$$
    z_1(-\vartheta)
    =
    \overline{z_1(\vartheta)}.
$$
Therefore,
$$
    \vartheta
    \longmapsto
    \Psi_\varepsilon
    \bigl(z_1(\vartheta)\bigr),
$$
is even, and its derivative is odd. In addition,
$$
    z_1(\vartheta)\cdot z_1'(\vartheta)
    =
    \varepsilon f_1'(\vartheta),
$$
is odd. It follows from the definition of $\mathcal F_1^\sigma$, given by \eqref{F1}, that
$$
    \mathcal F_1^\sigma(-\vartheta)
    =
    -\mathcal F_1^\sigma(\vartheta).
$$

For the representative inner component, write
$$
    \gamma_2(\vartheta)
    :=
    e^{i\frac{\pi\sigma}{\mathbf m}}
    \left(
        \varepsilon z_2(\vartheta)
        +
        \varepsilon^\alpha\mathtt e_1
    \right).
$$
Since $z_2(-\vartheta)=\overline{z_2(\vartheta)}$, one has
$$
    \gamma_2(-\vartheta)
    =
    e^{i\frac{2\pi\sigma}{\mathbf m}}
    \overline{\gamma_2(\vartheta)}.
$$
The map
$$
    z
    \longmapsto
    e^{i\frac{2\pi\sigma}{\mathbf m}}\overline z,
$$
is a symmetry of the complete configuration. Consequently,
$$
    \Psi_\varepsilon
    \bigl(\gamma_2(-\vartheta)\bigr)
    =
    \Psi_\varepsilon
    \bigl(\gamma_2(\vartheta)\bigr).
$$
Thus the stream-function contribution to $\mathcal F_2^\sigma$, given by \eqref{F2}, is odd after
differentiation with respect to $\vartheta$.

Finally,
$$
    z_2(\vartheta)\cdot z_2'(\vartheta)
    =
    \varepsilon f_2'(\vartheta),
$$
is odd, and
$$
    \bigl(z_2'(\vartheta)\bigr)_1
    =
    \varepsilon
    w_2(\vartheta)^{-1}
    f_2'(\vartheta)\cos\vartheta
    -
    w_2(\vartheta)\sin\vartheta,
$$
is also odd. Hence
$$
   \mathcal  F_2^\sigma(-\vartheta)
    =
    -\mathcal F_2^\sigma(\vartheta).
$$

To prove the periodicity of the first component, observe that
$$
    z_1\left(\vartheta+\frac{2\pi}{\mathbf m}\right)
    =
    e^{i\frac{2\pi}{\mathbf m}}z_1(\vartheta).
$$
The rotational invariance of the stream function, together with the
$\frac{2\pi}{\mathbf m}$-periodicity of $f_1'$, yields
$$
  \mathcal F_1^\sigma
    \left(\vartheta+\frac{2\pi}{\mathbf m}\right)
    =\mathcal 
    F_1^\sigma(\vartheta).
$$
This concludes the proof of the Lemma.
\end{proof}

Next, we establish the well-definedness and continuity of the nonlinear
operator for positive values of the parameter $\varepsilon$ as well as the extension to the singular value $\varepsilon=0$.  As will become apparent below, the periodic Hilbert transform naturally arises at this limiting value. For the reader's convenience, we first recall its
definition and its action on Fourier modes.
For $h\in C^{1,\nu}(\mathbb T)$, we define the periodic Hilbert transform by
\begin{equation}
\label{definition_Hilbert_transform}
    \mathcal H[h](\vartheta)
    :=
    \frac{1}{2\pi}
    \operatorname{p.v.}
    \int_0^{2\pi}
    \cot\left(
        \frac{\vartheta-\eta}{2}
    \right)
    h(\eta)\,d\eta.
\end{equation}
With this convention, for every $n\in\mathbb N^*$,
\begin{equation}
\label{Hilbert_Fourier_action}
    \mathcal H[\cos(n\vartheta)]
    =
    \sin(n\vartheta),
    \qquad
    \mathcal H[\sin(n\vartheta)]
    =
    -\cos(n\vartheta).
\end{equation}

\begin{pro}
\label{Fwelldef}
Fix $\sigma\in\{0,1\}$, and assume that
\[
    \frac{1}{\mathbf m+2}
    <
    \alpha
    <
    \frac12,
    \qquad
    \Omega_0
    =
    \frac{1-\mathbf m}{4\pi}.
\]
 There exist $\varepsilon_0>0$,
$\Lambda_0>0$, and $r>0$ 
such that the nonlinear operator
$   \mathcal  F^\sigma$ can be extended from
    $[0,\varepsilon_0)\times U$ to
     $Y$
as a continuous, with
\begin{equation}
\label{definition_F_at_zero}
\mathcal F^\sigma(0,\Lambda,f)
=
\left(
    \Omega_0 f_1',
     \frac{1}{2\pi}
        f_2'
        +
        \frac{1}{2\pi}\mathcal H[f_2]-\Lambda\sin(\cdot)
\right).
\end{equation}
\end{pro}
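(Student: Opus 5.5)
We split $\mathcal F^\sigma$ into the pieces displayed in \eqref{F12}--\eqref{F22}: the rotation terms, the self-interaction terms $\mathcal S_1,\mathcal S_2$, and the interaction terms $\mathcal I_{1,2},\mathcal I_{2,1},\mathcal I_{2,2}$. For each piece we prove two things. First, for $\varepsilon\in(0,\varepsilon_0)$ it maps $U$ continuously into $C^{0,\nu}$. Second, as $\varepsilon\to0^+$ it converges in $C^{0,\nu}$, locally uniformly in $(\Lambda,f)\in U$, to the corresponding piece of \eqref{definition_F_at_zero}. We then define $\mathcal F^\sigma(0,\Lambda,f)$ as this limit. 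Joint continuity on $[0,\varepsilon_0)\times U$ follows from this locally uniform convergence together with the continuity of the limit operator. Lemma \ref{lemma:symmetries_F} gives the values in $Y$ for $\varepsilon>0$, and these symmetries pass to the limit.

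The rotation terms are explicit. In $\mathcal F_1$ the rotation term is $(\Omega_0+\varepsilon^\alpha\Lambda)\varepsilon f_1'\to0$ (we take the expected $\Omega_0f_1'$ to come from the self-term below, see next paragraph). In $\mathcal F_2$ we write $w_2^{\pm1}=1+O(\varepsilon f_2)$ in $C^{1,\nu}$ and isolate the singular piece $-\varepsilon^{-\alpha}(\Omega_0+\varepsilon^\alpha\Lambda)w_2\sin\vartheta$. This piece equals $-\varepsilon^{-\alpha}\Omega_0\sin\vartheta-\Lambda\sin\vartheta+O(\varepsilon^{1-\alpha})$. The $\varepsilon^{-\alpha}$ part must cancel exactly against the leading part of $\mathcal I_{2,2}$, as in Proposition \ref{F2_at_0}.

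For the self-terms we use the boundary representation: $\partial_\vartheta$ of the area integral becomes a line integral of $\log|z(\vartheta)-z(\eta)|$ against $\partial_\eta$ of the boundary. We expand in $\varepsilon$ around the unit circle, using Lemma \ref{integral}, the identity $\log|e^{i\vartheta}-e^{i\eta}|=\log|2\sin\frac{\vartheta-\eta}{2}|$ and its derivative $\frac12\cot\frac{\vartheta-\eta}2$. This gives
\[
\mathcal S_2(\varepsilon,f_2)=\frac{1}{2\pi}\bigl(f_2'+\mathcal H[f_2]\bigr)+\varepsilon\,R_2(\varepsilon,f_2).
\]
Here the factor $\varepsilon^{-1}$ is compensated by the $O(\varepsilon)$ size of the perturbation, and $R_2$ is a smooth-kernel plus Calderón-type commutator remainder bounded in $C^{0,\nu}$. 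For $\mathcal S_1$ the same computation gives order $\varepsilon^{2\alpha}\cdot\frac{1}{2}(f_1'+\mathcal H f_1)$-type terms. These are to be combined with the rotation term $\varepsilon\Omega_0\varepsilon^{-2\alpha}\cdot\varepsilon^{-(1-2\alpha)}\cdot\varepsilon f_1'$ from \eqref{F1}: that term is of order one and produces the limit $\Omega_0f_1'$, while the Rankine self-interaction, after division by $\varepsilon^{1-2\alpha}$, is $O(\varepsilon^{2\alpha})\to0$. The Hölder bounds for these singular integrals follow from the appendix estimates, uniformly for $\|f\|_X<r$ with $r$ small so that $1+2\varepsilon f_j>0$ and the curves stay in a chord-arc class.

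The interaction terms have smooth kernels because the components are separated: the distances are at least $c\,\varepsilon^\alpha$ between inner components and at least $1/2$ between inner and outer. For these we Taylor-expand exactly as in Propositions \ref{F1_at_0} and \ref{F2_at_0}, now with $f\neq0$. For $\mathcal I_{1,2}$, replacing the small disks by point masses up to $O(\varepsilon^2)$ multipole corrections from $f_2$ gives a bound $\varepsilon^{(\mathbf m+2)\alpha-1}+\varepsilon^{2\alpha}$ in $C^{1}$. For $\mathcal I_{2,1}$, the fact that the outer patch is an $O(\varepsilon)$ perturbation of the disk gives a smooth harmonic-correction potential whose $\vartheta$-derivative at the point $\varepsilon^\alpha+\varepsilon w_2e^{i\vartheta}$ is $O(\varepsilon)$; divided by $\varepsilon$ this is $O(1)$ times $\varepsilon$, hence vanishes. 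The main obstacle is $\mathcal I_{2,2}$. There we must show that the $f_2$-dependence of the far inner components (their dipole and quadrupole moments, of size $\varepsilon^{2}\cdot\varepsilon$) evaluated on a target curve perturbed by $\varepsilon^2 f_2$ contributes after division by $\varepsilon$ only terms like $\varepsilon^{1-2\alpha}$, $\varepsilon^{2-3\alpha}$, and that the leading point-vortex part reproduces $\frac{1-\mathbf m}{4\pi}\varepsilon^{-\alpha}w_2\sin\vartheta$ exactly, up to $O(\varepsilon^{1-2\alpha})$. This works only because $\alpha<\frac12$. We would handle it by expanding $\log|\cdot|$ in the ratio $\varepsilon/\varepsilon^\alpha$ with explicit Hölder control of each term of the series.
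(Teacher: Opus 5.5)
Your plan is essentially the paper's proof. It uses the same splitting into rotation, self- and interaction terms, and the same estimates ($\mathcal S_1=O(\varepsilon^{2\alpha})$, $\mathcal I_{1,2}=O(\varepsilon^{2\alpha}+\varepsilon^{(\mathbf m+2)\alpha-1})$, $\mathcal I_{2,1}=O(\varepsilon^{\alpha}+\varepsilon)$, $\mathcal S_2=\frac{1}{2\pi}(f_2'+\mathcal H[f_2])+O(\varepsilon)$ in $C^{0,\nu}$). It also has the same exact cancellation of the $\varepsilon^{-\alpha}$ rotation term against the leading term of $\mathcal I_{2,2}$, whose coefficient the paper identifies via Lemma~\ref{lem:C1}. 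You differ in two places:
\begin{itemize}
\item You treat the self-interactions through the contour representation, whereas the paper uses area integrals, the substitution $r=\rho w_1(\eta)$ and Lemma~\ref{lemmasurvey}.
\item You expect to need a multipole series for $\mathcal I_{2,2}$. A plain second-order Taylor expansion in $\varepsilon^{1-\alpha}$ is enough: its $y$-dependent first-order term does not depend on $\vartheta$, and its remainder divided by $\varepsilon$ is already $O(\varepsilon^{1-2\alpha})$.
\end{itemize}

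You need to fix one bookkeeping slip. By \eqref{F1}, the rotation term of $\mathcal F_1^\sigma$ is exactly $(\Omega_0+\varepsilon^\alpha\Lambda)f_1'$. The extra factor $\varepsilon$ in \eqref{F12} is a misprint, and your product $\varepsilon\Omega_0\varepsilon^{-2\alpha}\varepsilon^{-(1-2\alpha)}\varepsilon f_1'$ counts $\varepsilon$ once too often. So this term, not the self-term, produces $\Omega_0f_1'$, and you should drop your first-paragraph claim that it tends to $0$. Also, your $\mathcal I_{1,2}$ bound holds in $C^{0,\nu}$ rather than $C^1$, since $z_1'$ is only H\"older continuous.
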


\begin{proof}
Throughout the proof, the phase $\sigma\in\{0,1\}$ is fixed and is
occasionally suppressed from the notation. All constants denoted by $C>0$
may change from one line to another, but they are uniform for
$(\Lambda,f)\in U$ and $\varepsilon\in[0,\varepsilon_0)$, unless otherwise
specified.

In view of Lemma \ref{lemma:symmetries_F}, it is enough to show that both
components of $\mathcal  F^\sigma$ belong to $C^{0,\nu}(\mathbb T)$, depend continuously on $(\varepsilon,\Lambda,f)$, and admit continuous extensions to the singular value $\varepsilon=0$. 
We begin with the first component $\mathcal F_1^\sigma$, given by \eqref{F12}. Its local contribution is
$$
\left(
    \Omega_0+\varepsilon^\alpha\Lambda
\right)f_1',
$$
which belongs to $C^{0,\nu}(\mathbb T)$. Moreover,
\begin{equation}
\label{local_F1_limit}
\left\|
    \left(
        \Omega_0+\varepsilon^\alpha\Lambda
    \right)f_1'
    -
    \Omega_0f_1'
\right\|_{C^{0,\nu}}
\leq
C\varepsilon^\alpha.
\end{equation}
We next consider the outer self-interaction
$$
\begin{aligned}
\mathcal S_1(\varepsilon,f_1)(\vartheta)
=-
\frac{\varepsilon^{2\alpha-1}}{2\pi}
\partial_\vartheta
\int_0^{2\pi}\int_0^{w_1(\eta)}
\log\left|
    w_1(\vartheta)e^{i\vartheta}
    -
    r e^{i\eta}
\right|
r\,dr\,d\eta.
\end{aligned}
$$
After the change of variables
$$
    r=\rho w_1(\eta),
$$
we obtain
$$
\begin{aligned}
-\mathcal S_1(\varepsilon,f_1)(\vartheta)
=
\frac{\varepsilon^{2\alpha-1}}{2\pi}
\int_0^{2\pi}\int_0^1
\frac{
    \operatorname{Re}
    \left[
        \overline{\Delta(\rho,\vartheta,\eta)}
        z_1'(\vartheta)
    \right]
}{
    |\Delta(\rho,\vartheta,\eta)|^2
}
\rho w_1(\eta)^2
\,d\rho\,d\eta,
\end{aligned}
$$
where
$$
    z_1(\vartheta)
    =
    w_1(\vartheta)e^{i\vartheta},
    \qquad
    \Delta(\rho,\vartheta,\eta)
    :=
    z_1(\vartheta)-\rho z_1(\eta).
$$
Using
$$
    w_1(\eta)^2
    =
    w_1(\eta)^2-w_1(\vartheta)^2
    +
    w_1(\vartheta)^2
$$
and
$$
    w_1(\eta)^2-w_1(\vartheta)^2
    =
    2\varepsilon
    \bigl(
        f_1(\eta)-f_1(\vartheta)
    \bigr),
$$
we obtain
$$
\begin{aligned}
-\mathcal S_1(\varepsilon,f_1)(\vartheta)
&=
\frac{\varepsilon^{2\alpha}}{\pi}
\int_0^{2\pi}\int_0^1
\frac{
    \operatorname{Re}
    \left[
        \overline{\Delta(\rho,\vartheta,\eta)}
        z_1'(\vartheta)
    \right]
}{
    |\Delta(\rho,\vartheta,\eta)|^2
}
\rho
\bigl(
    f_1(\eta)-f_1(\vartheta)
\bigr)
\,d\rho\,d\eta
\\
&\quad
+
\frac{\varepsilon^{2\alpha-1}}{2\pi}
w_1(\vartheta)^2
\int_0^{2\pi}\int_0^1
\frac{
    \operatorname{Re}
    \left[
        \overline{\Delta(\rho,\vartheta,\eta)}
        z_1'(\vartheta)
    \right]
}{
    |\Delta(\rho,\vartheta,\eta)|^2
}
\rho\,d\rho\,d\eta.
\end{aligned}
$$
For every $\rho\in[0,1)$, periodicity in $\eta$ gives
$$
\begin{aligned}
\int_0^{2\pi}
\frac{
    \operatorname{Re}
    \left[
        \overline{\Delta(\rho,\vartheta,\eta)}
        \rho z_1'(\eta)
    \right]
}{
    |\Delta(\rho,\vartheta,\eta)|^2
}
\,d\eta
&=
-
\int_0^{2\pi}
\partial_\eta
\log|\Delta(\rho,\vartheta,\eta)|
\,d\eta
&=0.
\end{aligned}
$$
The identity may first be applied for $\rho\leq1-\delta$ and then extended
to $\rho=1$ by letting $\delta\to0^+$.
Set
$$
    p_1(\vartheta)
    :=
    \frac{f_1'(\vartheta)}{w_1(\vartheta)}
    e^{i\vartheta}.
$$
Since
$$
    z_1'(\vartheta)
    =
    i z_1(\vartheta)
    +
    \varepsilon p_1(\vartheta),
$$
we have
$$
    z_1'(\vartheta)-\rho z_1'(\eta)
    =
    i\Delta(\rho,\vartheta,\eta)
    +
    \varepsilon
    \bigl(
        p_1(\vartheta)-\rho p_1(\eta)
    \bigr).
$$
Moreover,
$$
    \operatorname{Re}
    \left[
        i|\Delta(\rho,\vartheta,\eta)|^2
    \right]
    =0.
$$
Consequently,
$$
\begin{aligned}
-\mathcal S_1(\varepsilon,f_1)(\vartheta)
&=
\frac{\varepsilon^{2\alpha}}{\pi}
\operatorname{Re}
\left\{
    z_1'(\vartheta)
    \int_0^{2\pi}\int_0^1
    \frac{\rho}{
        \Delta(\rho,\vartheta,\eta)
    }
    \bigl(
        f_1(\eta)-f_1(\vartheta)
    \bigr)
    \,d\rho\,d\eta
\right\}
\\
&\quad
+
\frac{\varepsilon^{2\alpha}}{2\pi}
w_1(\vartheta)^2
\operatorname{Re}
\left\{
    p_1(\vartheta)
    \int_0^{2\pi}\int_0^1
    \frac{\rho}{
        \Delta(\rho,\vartheta,\eta)
    }
    \,d\rho\,d\eta
    -
    \int_0^{2\pi}\int_0^1
    \frac{\rho^2 p_1(\eta)}{
        \Delta(\rho,\vartheta,\eta)
    }
    \,d\rho\,d\eta
\right\}.
\end{aligned}
$$
This identity exposes the factor $\varepsilon^{2\alpha}$ and removes the apparent singularity.
Define, for $k\in\{1,2\}$,
$$
    L_k(\rho,\vartheta,\eta)
    :=
    \frac{\rho^k}{
        z_1(\vartheta)-\rho z_1(\eta)
    }.
$$
After reducing $\varepsilon_0$ and $r$, there exists $c>0$ such that
$$
\left|
    z_1(\vartheta)-\rho z_1(\eta)
\right|^2
\geq
c\left[
    (1-\rho)^2
    +
    4\rho
        \sin^2\left(
            \frac{\vartheta-\eta}{2}
        \right)
\right].
$$
Using  the elementary inequalities
$$
    \frac{1}{(a^2+b^2)^{1/2}}
    \leq
    \frac{C}{
        a^\nu b^{1-\nu}
    },
\qquad 
    \frac{1}{a^2+b^2}
    \leq
    \frac{C}{
        a^\nu b^{2-\nu}
    },
    \qquad a,b>0,
$$
we obtain
\begin{equation*}
    |L_{k}(\rho,\vartheta,\eta)|
    \leq
    C\left|
        \sin\left(
            \frac{\vartheta-\eta}{2}
        \right)
    \right|^{\nu-1}
    \frac{
        \rho^{(1+\nu)/2}
    }{
        (1-\rho)^\nu
    }.
\end{equation*}
Moreover,
$$
    \partial_\vartheta L_{k}
    =
    -
    \frac{
        \rho^k z_1'(\vartheta)
    }{
        \Delta(\rho,\vartheta,\eta)^2
    },
$$
and hence
\begin{equation*}
    |\partial_\vartheta
    L_{k}(\rho,\vartheta,\eta)|
    \leq
    C
    \left|
        \sin\left(
            \frac{\vartheta-\eta}{2}
        \right)
    \right|^{\nu-2}
    \frac{
        \rho^{\nu/2}
    }{
        (1-\rho)^\nu
    }.
\end{equation*}
Since
$$
    \rho
    \longmapsto
    \frac{\rho^{(1+\nu)/2}}{(1-\rho)^\nu},
    \qquad
    \rho
    \longmapsto
    \frac{\rho^{\nu/2}}{(1-\rho)^\nu}
$$
belong to $L^1(0,1)$, Lemma \ref{lemmasurvey} applies to the real and
imaginary parts of $L_{k}$. Consequently, for $k\in\{1,2\}$, the operators
$$
    h
    \longmapsto
    \int_{\mathbb T}\int_0^1
    L_{k}(\rho,\vartheta,\eta)
    h(\eta)
    \,d\rho\,d\eta,
$$
map $L^\infty(\mathbb T)$ continuously into
$C^{0,\nu}(\mathbb T)$, with bounds uniform on the prescribed
neighborhood. Since
$$
    z_1',
    \quad
    p_1,
    \quad
    w_1^2,
    \quad
    f_1
    \in C^{0,\nu}(\mathbb T),
$$
the preceding representation yields
\begin{equation}\label{outer_self_extension_bound}
    \|\mathcal S_1(\varepsilon,f_1)\|_{C^{0,\nu}}
    \leq
    C\varepsilon^{2\alpha}.
\end{equation}
Let's now study the interaction between the outer boundary and the
inner components. 
Since $f_2$ has zero average,
\[
    |\mathcal D_{2}^{\varepsilon}|
    =
    \pi.
\]
After writing the inner integrals as integrals over
$\mathcal D_{2}^{\varepsilon}$, given by \eqref{Djeps}, their contribution to $\mathcal F_1^\sigma$ is
\begin{align}
\label{outer_inner_F1}
 \nonumber \mathcal I_{1,2}(\varepsilon,f)(\vartheta)
&=\frac{\varepsilon^{2\alpha-1}}{2 \pi^2} \sum_{j = 1}^{\mathbf{m}} \partial_\vartheta\int_{0}^{2 \pi} \int_{0}^{w_2(\eta)} \log \left|w_1(\vartheta) e^{i \vartheta} - e^{i \frac{\pi(2j-2+\sigma)}{\mathbf m}} \left(\varepsilon r e^{i \eta} + \varepsilon^\alpha \right)\right|  r \, dr \, d\eta \\ 
&=
\frac{\varepsilon^{2\alpha-1}}{2\pi^2}
\partial_\vartheta
\sum_{j=1}^{\mathbf m}
\int_{\mathcal D_{2}^{\varepsilon}}
\log\left|
    z_{1}(\vartheta)
    -
    \varepsilon^\alpha e^{i\phi_j}
    -
    \varepsilon e^{i\phi_j}y
\right|
\,dA(y),
\end{align}
where
$$
    \phi_j
    =
    \frac{\pi(2j-2+\sigma)}{\mathbf m},
    \qquad
    j\in\{1,\ldots,\mathbf m\}.
$$

The outer boundary is uniformly separated from the inner components.
Therefore, Taylor's formula in the variable $\varepsilon y$ gives
\begin{align}
\label{point_mass_reduction_F1}
&
\int_{\mathcal D_{2}^{\varepsilon}}
\log\left|
    z_{1}(\vartheta)
    -
    \varepsilon^\alpha e^{i\phi_j}
    -
    \varepsilon e^{i\phi_j}y
\right|
\,dA(y)=
\pi
\log\left|
    z_{1}(\vartheta)
    -
    \varepsilon^\alpha e^{i\phi_j}
\right|
+
\mathcal E_{j,\varepsilon}(\vartheta),
\end{align}
where
\begin{equation}
\label{error_point_mass_F1}
\left\|
    \partial_\vartheta
    \mathcal E_{j,\varepsilon}
\right\|_{C^{0,\nu}}
\leq
C\varepsilon.
\end{equation}
It follows that the contribution of the errors in
\eqref{point_mass_reduction_F1} is bounded by
$C\varepsilon^{2\alpha}$.
 Since the points $e^{i\phi_j}$ are the roots of
    $z^{\mathbf m}=e^{i\pi\sigma}$, then
\[
\prod_{j=1}^{\mathbf m}
\left(
    z-\varepsilon^\alpha e^{i\phi_j}
\right)
=
z^{\mathbf m}
-
\varepsilon^{\mathbf m\alpha}e^{i\pi\sigma}.
\]
Therefore, the point-mass part of \eqref{outer_inner_F1} is
\begin{align*}
&
\frac{\varepsilon^{2\alpha-1}}{2\pi}
\partial_\vartheta
\log\left|
    z_{1}(\vartheta)^{\mathbf m}
    -
    \varepsilon^{\mathbf m\alpha}e^{i\pi\sigma}
\right|
=
\frac{\varepsilon^{2\alpha-1}}{2\pi}
\partial_\vartheta
\left[
    \mathbf m\log w_{1}(\vartheta)
    +
    \log\left|
        1-
        \varepsilon^{\mathbf m\alpha}
        e^{i\pi\sigma}
        z_{1}(\vartheta)^{-\mathbf m}
    \right|
\right].
\end{align*}
Since
\[
    \frac{w_{1}'}{w_{1}}
    =
    \varepsilon
    \frac{f_1'}{w_{1}^2},
\]
the first term is bounded in $C^{0,\nu}$ by
$C\varepsilon^{2\alpha}$. The second one is bounded by
\[
    C
    \varepsilon^{2\alpha-1}
    \varepsilon^{\mathbf m\alpha}
    =
    C
    \varepsilon^{(\mathbf m+2)\alpha-1}.
\]
Consequently,
\begin{equation}
\label{outer_inner_F1_final_bound}
\left\|
    \mathcal I_{1,2}(\varepsilon,f)
\right\|_{C^{0,\nu}}
\leq
C\left(
    \varepsilon^{2\alpha}
    +
    \varepsilon^{(\mathbf m+2)\alpha-1}
\right).
\end{equation}
Combining \eqref{local_F1_limit},
\eqref{outer_self_extension_bound}, and
\eqref{outer_inner_F1_final_bound}, we obtain
\begin{align}
\label{estimate_F1_extension}
\left\|
  \mathcal  F_1^\sigma(\varepsilon,\Lambda,f)
    -
    \Omega_0f_1'
\right\|_{C^{0,\nu}}
&\leq
C\left(
    \varepsilon^\alpha
    +
    \varepsilon^{2\alpha}
    +
    \varepsilon^{(\mathbf m+2)\alpha-1}
\right).
\end{align}

Next, we shall deal with the second component $\mathcal F_2^\sigma$, given by \eqref{F22}. We first identify the contribution of the inner component interacting with
itself. For $t\in[0,\varepsilon_0)$,  set
\[
\mathcal J_t(\vartheta)
:=
\int_0^{2\pi}
\int_0^{ \sqrt{1+2t f_2(\eta)}}
\log\left|
    \sqrt{1+2t f_2(\vartheta)} e^{i\vartheta}-re^{i\eta}
\right|
r\,dr\,d\eta.
\]
The self-interaction contribution to $\mathcal F_2^\sigma$ is
\[
  \mathcal S_2(\varepsilon,f_2)=  \frac{1}{2\pi^2\varepsilon}
    \partial_\vartheta\mathcal J_\varepsilon.
\]
Since $\mathcal J_0$ is constant in $\vartheta$, this can be written as
\[
\begin{aligned}
\mathcal S_2(\varepsilon,f_2)
&=
\frac{1}{2\pi^2\varepsilon}
\partial_\vartheta
\left(
    \mathcal J_\varepsilon-\mathcal J_0
\right)
\\
&=
\frac{1}{2\pi^2}
\int_0^1
\partial_\vartheta
\dot{\mathcal J}_{s\varepsilon}
\,ds.
\end{aligned}
\]
Differentiating $\mathcal J_t$ with respect to $t$ gives
\begin{align}
\label{shape_derivative_self}
\dot{\mathcal J}_t(\vartheta)
&=
\int_0^{2\pi}
f_2(\eta)
\log\left|
     \sqrt{1+2t f_2(\vartheta)} e^{i\vartheta}- \sqrt{1+2t f_2(\eta)} e^{i\eta}
\right|
\,d\eta
\nonumber\\
&\quad
+
\frac{f_2(\vartheta)}{ \sqrt{1+2t f_2(\vartheta)} } \int_0^{2\pi}
\int_0^{ \sqrt{1+2t f_2(\eta)} }
\operatorname{Re}
\left[
    \frac{e^{i\vartheta}}{
         \sqrt{1+2t f_2(\vartheta)} e^{i\vartheta}-re^{i\eta}
    } 
\right]
r\,dr\,d\eta,
\end{align}
The same chord--arc and kernel estimates used for $\mathcal S_1(\varepsilon,f_1)$  imply that
\[
    t
    \longmapsto
    \partial_\vartheta\dot{\mathcal J}_t
\]
is continuous from a neighborhood of $0$ into
$C^{0,\nu}(\mathbb T)$. More precisely,
\begin{equation}
\label{continuity_shape_derivative_self}
\left\|
    \partial_\vartheta\dot{\mathcal J}_t
    -
    \partial_\vartheta\dot{\mathcal J}_0
\right\|_{C^{0,\nu}}
\leq
Ct.
\end{equation}
At $t=0$, formula \eqref{shape_derivative_self} becomes
\begin{align*}
\dot{\mathcal J}_0(\vartheta)
&=
\int_0^{2\pi}
f_2(\eta)
\log\left|
    e^{i\vartheta}-e^{i\eta}
\right|
\,d\eta
+
f_2(\vartheta)
\int_0^{2\pi}
\int_0^1
\operatorname{Re}
\left[
    \frac{
        e^{i\vartheta}
    }{
        e^{i\vartheta}-re^{i\eta}
    }
\right]
r\,dr\,d\eta.
\end{align*}
By Lemma \ref{integral_double},
\[
\int_0^{2\pi}
\int_0^1
\operatorname{Re}
\left[
    \frac{
        e^{i\vartheta}
    }{
        e^{i\vartheta}-re^{i\eta}
    }
\right]
r\,dr\,d\eta
=
\pi.
\]
Therefore,
\[
\dot{\mathcal J}_0(\vartheta)
=
\int_0^{2\pi}
\log\left|
    e^{i\vartheta}-e^{i\eta}
\right|
f_2(\eta)\,d\eta
+
\pi f_2(\vartheta).
\]
It follows from \eqref{continuity_shape_derivative_self} that
\begin{equation}
\label{self_interaction_limit}
\left\|
    \mathcal S_2(\varepsilon,f_2)
    -\frac{1}{2\pi^2} \partial_\vartheta
    \int_0^{2\pi}
\log\left|
    e^{i\vartheta}-e^{i\eta}
\right|
f_2(\eta)\,d\eta
-\frac{1}{2\pi} 
 f_2'(\vartheta)
\right\|_{C^{0,\nu}}
\leq
C\varepsilon.
\end{equation}
Let us now discuss the contribution of the outer domain to the second component. It can be written as
\begin{align*}
    \mathcal I_{2,1}(\varepsilon,f)(\vartheta)
=
-\frac{1}{2\pi\varepsilon}
\partial_\vartheta
\int_{\mathcal D_{1}^{\varepsilon}}
\log\left|
    A_\varepsilon^\sigma(\vartheta)-y
\right|
\,dA(y).
\end{align*}
where
\[
    A_\varepsilon^\sigma(\vartheta)
    :=
    e^{i\frac{\pi\sigma}{\mathbf m}}
    \left(
        \varepsilon z_{2}(\vartheta)
        +
        \varepsilon^\alpha
    \right).
\]
We decompose
\[
    \mathcal D_{1}^{\varepsilon}
    =
    \mathbb D
    +
    \left(
        \mathcal D_{1}^{\varepsilon}-\mathbb D
    \right),
\]
where the second term is understood as a signed radial-domain difference.
For the unit disk, Lemma \ref{integral_disk} gives
\[
\begin{aligned}
&
-\frac{1}{2\pi\varepsilon}
\partial_\vartheta
\int_{\mathbb D}
\log\left|
    A_\varepsilon^\sigma(\vartheta)-y
\right|
\,dA(y)
=
-\frac{1}{4\varepsilon}
\partial_\vartheta
\left|
    \varepsilon^\alpha
    +
    \varepsilon z_{2}(\vartheta)
\right|^2.
\end{aligned}
\]
Consequently, this term is bounded in $C^{0,\nu}$ by
$C\varepsilon^\alpha$.
 The radial thickness of
$\mathcal D_{1}^{\varepsilon}-\mathbb D$ is $O(\varepsilon)$, while
\[
    \left|
        \partial_\vartheta
        A_\varepsilon^\sigma(\vartheta)
    \right|
    \leq C\varepsilon.
\]
Since the inner component remains uniformly separated from the outer
boundary, the contribution of the signed-domain difference is bounded by
$C\varepsilon$. Hence,
\begin{equation}
\label{outer_contribution_F2}
\left\|
    \mathcal I_{2,1}(\varepsilon,f)
\right\|_{C^{0,\nu}}
\leq
C\left(
    \varepsilon^\alpha+\varepsilon
\right).
\end{equation}
The interaction of the representative inner component with the remaining
components is
\begin{align}
\label{distinct_inner_interaction}
\mathcal I_{2,2}(\varepsilon,f_2)(\vartheta)
&=
\frac{1}{2\pi^2\varepsilon}
\partial_\vartheta
\sum_{j=2}^{\mathbf m}
\int_{\mathcal D_{2,\varepsilon}}
\log\left|
    1-\mathcal R_j
    +
    \varepsilon^{1-\alpha}
    \left(
        z_2(\vartheta)-\mathcal R_jy
    \right)
\right|
\,dA(y),
\end{align}
where \[
   \mathcal R_j
    =
    e^{i\frac{2\pi(j-1)}{\mathbf m}},
    \qquad
    j\in\{1,\ldots,\mathbf m\},
\]
Since $1-\mathcal R_j\neq0$ for $j\geq2$, Taylor's formula yields
\begin{align}
\label{polygon_taylor}
&
\log\left|
    1-\mathcal R_j
    +
    \varepsilon^{1-\alpha}
    \left(
        z_2(\vartheta)-\mathcal R_jy
    \right)
\right|
\nonumber\\
&\qquad
=
\log|1-\mathcal R_j|
+
\varepsilon^{1-\alpha}
\operatorname{Re}
\left[
    \frac{
        z_2(\vartheta)-\mathcal R_jy
    }{
        1-\mathcal R_j
    }
\right]
+
\varepsilon^{2-2\alpha}
\mathcal P_{j,\varepsilon}(\vartheta,y),
\end{align}
where
\[
\left\|
    \partial_\vartheta
    \mathcal P_{j,\varepsilon}
\right\|_{C^{0,\nu}}
\leq C.
\]
The term involving \(y\) in the first-order expansion is independent of
\(\vartheta\), and its derivative vanishes. Since
\[
    |\mathcal D_{2,\varepsilon}|=\pi,
\]
we obtain
\begin{align*}
\mathcal I_{2,2}(\varepsilon,f_2)(\vartheta)
&=
\frac{1}{2\pi\varepsilon^\alpha}
\operatorname{Re}
\left[
    z_2'(\vartheta)
    \sum_{j=2}^{\mathbf m}
    \frac{1}{1-\mathcal R_j}
\right]
+
\mathcal R_\varepsilon(\vartheta).
\end{align*}
By Lemma \ref{lem:C1},
\[
    \sum_{j=2}^{\mathbf m}
    \frac{1}{1-\mathcal R_j}
    =
    \frac{\mathbf m-1}{2}.
\]
Therefore,
\begin{equation}
\label{polygon_leading_term}
\mathcal I_{2,2}(\varepsilon,f_2)(\vartheta)
=
\frac{\mathbf m-1}{4\pi}
\varepsilon^{-\alpha}
\operatorname{Re}
\left[
    z_2'(\vartheta)
\right]
+
\mathcal P_\varepsilon(\vartheta),
\end{equation}
with
\begin{equation}
\label{polygon_remainder}
\left\|
    \mathcal P_\varepsilon
\right\|_{C^{0,\nu}}
\leq
C
\frac{\varepsilon^{2-2\alpha}}{\varepsilon}
=
C\varepsilon^{1-2\alpha}.
\end{equation}
On the other hand,
\[
\operatorname{Re}
\left[
    z_2'(\vartheta)
\right]
=
\varepsilon
\frac{f_2'(\vartheta)}{w_{2}(\vartheta)}
\cos\vartheta
-
w_{2}(\vartheta)\sin\vartheta.
\]
The local rotational contribution to $\mathcal F_2^\sigma$ is
\begin{align}
\label{local_rotation_F2_extension}
&
\left(
    \Omega_0\varepsilon^{2-2\alpha}
    +
    \Lambda\varepsilon^{2-\alpha}
\right)
f_2'
+
\left(
    \Omega_0\varepsilon^{-\alpha}
    +
    \Lambda
\right)
\operatorname{Re}
\left[
    z_2'
\right].
\end{align}
Since
\[
    \Omega_0
    +
    \frac{\mathbf m-1}{4\pi}
    =
    0,
\]
the singular terms in
\eqref{polygon_leading_term} and
\eqref{local_rotation_F2_extension} cancel exactly.
 After this cancellation, the remaining local terms satisfy
\begin{align}
\label{remaining_local_F2}
&
\left\|
\left(
    \Omega_0\varepsilon^{2-2\alpha}
    +
    \Lambda\varepsilon^{2-\alpha}
\right)
f_2'
+
\Lambda
\operatorname{Re}
\left[
    z_2'
\right]
+
\Lambda\sin\vartheta
\right\|_{C^{0,\nu}}
\leq
C\varepsilon.
\end{align}
Combining \eqref{polygon_remainder} and
\eqref{remaining_local_F2}, we obtain
\begin{equation}
\label{local_polygon_combined}
\left\|
    \mathcal I_{2,2}(\varepsilon,f_2)
    -
    \left(\Omega_0 + \varepsilon^\alpha \Lambda \right)
\Big[
    \varepsilon^{2-2\alpha} f_2'
    +\varepsilon^{-\alpha} \operatorname{Re}
\left[
    z_2'
\right]
\Big]
    +
    \Lambda\sin\vartheta
\right\|_{C^{0,\nu}}
\leq
C\left(
    \varepsilon+\varepsilon^{1-2\alpha}
\right).
\end{equation}
Using \eqref{F22}, \eqref{self_interaction_limit},
\eqref{outer_contribution_F2}, and
\eqref{local_polygon_combined}, we obtain
\begin{equation}\label{estimate_F2_extension}
\left\|
  \mathcal  F_2^\sigma(\varepsilon,\Lambda,f)
    -
    \left(
        \mathscr L[f_2]-\Lambda\sin\vartheta
    \right)
\right\|_{C^{0,\nu}}
\leq
C\left(
    \varepsilon^\alpha
    +
    \varepsilon
    +
    \varepsilon^{1-2\alpha}
\right),
\end{equation}
where
\begin{align*}
\mathscr L[f_2](\vartheta)&:=\frac{1}{2\pi^2} \partial_\vartheta
    \int_0^{2\pi}
\log\left|
    e^{i\vartheta}-e^{i\eta}
\right|
f_2(\eta)\,d\eta
+\frac{1}{2\pi} 
 f_2'(\vartheta)\\
 &= \frac{1}{2\pi}\mathcal{H}[f_2] (\vartheta)+\frac{1}{2\pi} 
 f_2'(\vartheta).
 \end{align*}
Since
\[
    \alpha>0,
    \qquad
    1-2\alpha>0,
    \qquad
    (\mathbf m+2)\alpha-1>0,
\]
the right-hand sides of
\eqref{estimate_F1_extension} and
\eqref{estimate_F2_extension} converge to zero as
$\varepsilon\to0^+$. Thus setting
\begin{equation}
\label{definition_F_at_zero}
\mathcal F^\sigma(0,\Lambda,f)
:=
\left(
    \Omega_0 f_1',
    \mathscr L [f_2]-\Lambda\sin\vartheta
\right),
\qquad
f=(f_1,f_2)\in X.
\end{equation}
gives a continuous extension of
$\mathcal F^\sigma$ to $\varepsilon=0$.
Finally, if
\[
    f_2(\vartheta)
    =
    \sum_{n=2}^{\infty}
    f_{2,n}\cos(n\vartheta),
\]
then the identity
\[
    \log\left|
        e^{i\vartheta}-e^{i\eta}
    \right|
    =
    -\sum_{n=1}^{\infty}
    \frac{\cos(n(\vartheta-\eta))}{n}
\]
implies
\begin{equation}
\label{Fourier_limiting_inner_operator}
    \mathscr L[f_2](\vartheta)
    =
    -
    \sum_{n=2}^{\infty}
    \frac{n-1}{2\pi}
    f_{2,n}\sin(n\vartheta).
\end{equation}
Hence
\[
    \Omega_0f_1'\in\mathcal Y_{1,\mathbf m},
    \qquad
    \mathscr L[f_2]-\Lambda\sin\vartheta
    \in\mathcal Y_2.
\]
Therefore,
\begin{equation}\label{definition_F_at_zero}
\mathcal F^\sigma(0,\Lambda,f)
:=
\left(
    \Omega_0 f_1',
    \mathscr L [f_2]-\Lambda\sin\vartheta
\right)\in Y.
\end{equation}
The convergence estimates are uniform on $U$, and the expression in
\eqref{definition_F_at_zero} depends continuously on
$(\Lambda,f)\in U$. This proves the joint continuity of
\[
  \mathcal  F^\sigma:
    [0,\varepsilon_0)\times U
    \longrightarrow Y
\]
and completes the proof.
\end{proof}

\subsection{Differentiability with respect to $(\Lambda,f)$}\label{section_diff}

We next establish the differentiability of the nonlinear operator with
respect to the unknowns $(\Lambda,f)$. 

\begin{pro}
\label{pro:F_differentiable} 
Under the assumptions of Proposition \ref{Fwelldef}, for every fixed
$\sigma\in\{0,1\}$ and every $\varepsilon\in[0,\varepsilon_0)$, the map
\[
    (\Lambda,f)
    \longmapsto
    \mathcal F^\sigma(\varepsilon,\Lambda,f)
\]
is Fréchet differentiable on $U$. Moreover, the operator-valued map
\[
    D_{(\Lambda,f)}\mathcal F^\sigma:
    [0,\varepsilon_0)\times U
    \longrightarrow
    \mathcal L(\mathbb R\times X,Y)
\]
is continuous.

At $\varepsilon=0$, one has
\begin{align}
\label{derivative_F_at_zero}
&D_{(\Lambda,f)}
\mathcal F^\sigma(0,\Lambda,f)
\bigl[
    \widehat\Lambda,h
\bigr]
=
\left(
    \Omega_0h_1',
    \frac{1}{2\pi}\mathcal{H}[h_2] +\frac{1}{2\pi} 
 h_2'-\widehat\Lambda\sin(\cdot)
\right).
\end{align}

\end{pro}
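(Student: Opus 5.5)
We will follow the decomposition \eqref{F12}--\eqref{F22} term by term, using the same representations as in the proof of Proposition \ref{Fwelldef}. For fixed $\varepsilon\in(0,\varepsilon_0)$ every term is a composition of smooth (indeed analytic) operations in $(\Lambda,f)$ with singular integral operators whose kernels depend smoothly on $f$.

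The local terms are affine in $\Lambda$ and polynomial in $(f_j, w_j^{\pm1})$. Here $w_j=\sqrt{1+2\varepsilon f_j}$ is a smooth Nemytskii map of $C^{1,\nu}$ for $\|f\|_X<r$ small, so these terms are trivially $C^1$. For the integral terms $\mathcal S_1,\mathcal S_2$ we use the desingularized representations already derived, namely the $L_k$-kernel formula for $\mathcal S_1$ and the formula $\mathcal S_2=\frac{1}{2\pi^2}\int_0^1\partial_\vartheta\dot{\mathcal J}_{s\varepsilon}\,ds$. We then differentiate the kernels in $f$ in the direction $h$. Each Gateaux derivative produces kernels of the same type, with an extra factor that is Lipschitz in $(\vartheta,\eta)$, such as $z_1(\vartheta)-\rho z_1(\eta)$ differentiated in $f$. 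These kernels satisfy the same bounds $|K|\lesssim|\sin\frac{\vartheta-\eta}{2}|^{\nu-1}$ and $|\partial_\vartheta K|\lesssim|\sin\frac{\vartheta-\eta}{2}|^{\nu-2}$, with integrable $\rho$-weights. Lemma \ref{lemmasurvey} then gives bounds from $L^\infty$ into $C^{0,\nu}$, uniform on $U$. The same estimate applied to differences of kernels at $f$ and $\tilde f$ shows that the Gateaux derivative is continuous in $f$ in operator norm, hence Fréchet. The interaction terms $\mathcal I_{1,2},\mathcal I_{2,1},\mathcal I_{2,2}$ have smooth kernels, since the components are uniformly separated, and differentiation under the integral is routine.

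For continuity of $D_{(\Lambda,f)}\mathcal F^\sigma$ in $\varepsilon$ up to $0$, we redo the estimates of Proposition \ref{Fwelldef} at the level of derivatives. The $\Lambda$-derivative is explicit: it equals $(\varepsilon^{1+\alpha}f_1',\ \varepsilon^{2-\alpha}f_2'+\operatorname{Re}[z_2'])$, which tends to $(0,-\sin\vartheta)$ in $\mathcal L(\mathbb R,Y)$. For the $f$-derivatives we show the analogues of \eqref{estimate_F1_extension} and \eqref{estimate_F2_extension}:
\[
\|D_f\mathcal F_1^\sigma(\varepsilon,\Lambda,f)h-\Omega_0h_1'\|_{C^{0,\nu}}+\|D_f\mathcal F_2^\sigma(\varepsilon,\Lambda,f)h-\mathscr L[h_2]\|_{C^{0,\nu}}\le C\big(\varepsilon^\alpha+\varepsilon^{1-2\alpha}+\varepsilon^{(\mathbf m+2)\alpha-1}\big)\|h\|_X .
\]
Every bound in that proof came with an explicit power of $\varepsilon$ multiplying operators that are smooth in $f$, so differentiating in $f$ preserves these powers. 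Examples are the factor $\varepsilon^{2\alpha}$ in $\mathcal S_1$, the Taylor remainders $\varepsilon^{2-2\alpha}\mathcal P_{j,\varepsilon}$ in \eqref{polygon_taylor}, and $\partial_\vartheta\dot{\mathcal J}_t-\partial_\vartheta\dot{\mathcal J}_0=O(t)$. Since $\mathscr L=\frac1{2\pi}(\partial_\vartheta+\mathcal H)$ is linear and $\Omega_0h_1'$ is linear in $h_1$, this yields formula \eqref{derivative_F_at_zero}. Joint continuity on $[0,\varepsilon_0)\times U$ follows by combining this uniform convergence with continuity in $(\varepsilon,\Lambda,f)$ for $\varepsilon>0$.

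We expect the main obstacle to be the exact cancellation in $\mathcal F_2$. The singular term $\varepsilon^{-\alpha}\Omega_0\operatorname{Re}[z_2']$ and the polygon term $\frac{\mathbf m-1}{4\pi}\varepsilon^{-\alpha}\operatorname{Re}[z_2']$ are each of size $\varepsilon^{-\alpha}$ individually, including in their $f$-derivatives, because $\operatorname{Re}[z_2']$ depends on $f_2$ through $w_2$ and $\varepsilon w_2^{-1}f_2'\cos\vartheta$. We will handle this by differentiating only after the cancellation in \eqref{polygon_leading_term} has been performed. In that step, the $y$-dependent first-order term is killed in $\partial_\vartheta$ before differentiating in $f$. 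The only $f$-dependence of the mass is through $|\mathcal D_2^\varepsilon|=\pi$, which is fixed by the zero-average condition. Consequently the derivative of the remainder is still $O(\varepsilon^{1-2\alpha})$.
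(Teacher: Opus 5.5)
Your proposal is correct and follows essentially the same route as the paper: term-by-term G\^{a}teaux differentiation, kernel bounds of the type required by Lemma \ref{lemmasurvey} for the differentiated kernels and for their differences at two base points (giving operator-norm continuity, hence Fr\'echet differentiability), the affine limit map at $\varepsilon=0$, and a rerun of the estimates of Proposition \ref{Fwelldef} at the derivative level. In that rerun you make explicit what the paper leaves implicit, namely that the $\varepsilon^{-\alpha}$ cancellation in $\mathcal F_2^\sigma$ holds identically in $f$ because $|\mathcal D_2^\varepsilon|=\pi$ on $\mathcal X_2$.

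The remaining differences are cosmetic. You differentiate the already desingularized representation of $\mathcal S_1$, whereas the paper differentiates the original integral and then integrates by parts in $\eta$. Also, by \eqref{F1} the $\Lambda$-derivative of the first component is $\varepsilon^\alpha f_1'$ rather than $\varepsilon^{1+\alpha}f_1'$; this is immaterial, since both vanish as $\varepsilon\to0^+$.
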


\begin{proof}
Throughout the proof, $\sigma\in\{0,1\}$ is fixed. For notational
convenience, set
$$
    \zeta_j^\sigma
    :=
    e^{i\frac{\pi(2j-2+\sigma)}{\mathbf m}},
    \qquad
    j\in\{1,\ldots,\mathbf m\},
$$
and keep the notation
$$\mathcal R_j
    =
    e^{i\frac{2\pi(j-1)}{\mathbf m}}, \qquad j \in \{1, \ldots, \mathbf{m}\}.$$
Let $
    f=(f_1,f_2)\in B_X(0,r),$
    $
    h=(h_1,h_2)\in X,$
 and recall that
$$
    w_k(\vartheta)
    =
    \sqrt{1+2\varepsilon f_k(\vartheta)},
    \qquad
    z_k(\vartheta)
    =
    w_k(\vartheta)e^{i\vartheta}.
$$
Thus,
$$
    D_{f_k}w_k[h_k]
    =
    \varepsilon\frac{h_k}{w_k},\qquad
    D_{f_k}z_k[h_k](\vartheta)
    =
    \varepsilon Q_{k}(\vartheta),
    \qquad
    Q_{k}(\vartheta)
    :=
    \frac{h_k(\vartheta)}{w_k(\vartheta)}
    e^{i\vartheta}.
$$
We divide the proof into three steps.

\medskip

\noindent
\textbf{Differentiability for $\varepsilon>0$.}
The derivative with respect to $\Lambda$ is given explicitly by
\begin{equation}
\label{partial_Lambda_F}
\begin{aligned}
D_\Lambda \mathcal F^\sigma
(\varepsilon,\Lambda,f)(\vartheta)
=
\Bigg(
    &\varepsilon^\alpha f_1'(\vartheta), \varepsilon^{2-\alpha}f_2'(\vartheta)
    +
    \varepsilon
    \frac{f_2'(\vartheta)}{w_2(\vartheta)}
    \cos\vartheta
    -
    w_2(\vartheta)\sin\vartheta
\Bigg).
\end{aligned}
\end{equation}
This defines an element of $Y$. Moreover, the right-hand side depends
continuously on $(\varepsilon,\Lambda,f)$ as an element of $Y$.

    For the G\^{a}teaux derivative with respect to $f$ we consider the direction $h=(h_1,h_2)$ and compute the directional derivative component-wise as
    $$D_f\mathcal F_k^\sigma(\varepsilon,\Lambda,f)[h]=\frac{\mathrm{d}}{\mathrm{d}t}\mathcal F_k^\sigma(\varepsilon,\Lambda, f+th)\big|_{t=0}. $$
For a nonvanishing complex-valued function $Z_f$, we use the elementary
identity
$$
    D_f\log|Z_f|[h]
    =
    \operatorname{Re}
    \left(
        \frac{D_fZ_f[h]}{Z_f}
    \right).
$$
Moreover,
$$
    D_{f_j}\bigl(w_j^2\bigr)[h_j]
    =
    2\varepsilon h_j.
$$
These identities yield the directional derivatives below. To write them
compactly, define
\begin{align*}
\Delta_{11}(\rho,\vartheta,\eta)
&:=
z_1(\vartheta)-\rho z_1(\eta),
\\
\Delta_{12,j}^\sigma(\rho,\vartheta,\eta)
&:=
z_1(\vartheta)
-
\zeta_j^\sigma
\left(
    \varepsilon\rho z_2(\eta)
    +\varepsilon^\alpha
\right),
\\
\Delta_{21}^\sigma(\rho,\vartheta,\eta)
&:=
e^{i\frac{\pi\sigma}{\mathbf m}}
\left(
    \varepsilon z_2(\vartheta)
    +\varepsilon^\alpha
\right)
-
\rho z_1(\eta),
\\
\Delta_{22,j}(\rho,\vartheta,\eta)
&:=
\varepsilon z_2(\vartheta)
+\varepsilon^\alpha
-
\mathcal R_j
\left(
    \varepsilon\rho z_2(\eta)
    +\varepsilon^\alpha
\right).
\end{align*}
With this notation, the directional derivative of the first component is
\begin{align*}
\bigl[
D_f\mathcal F_1^\sigma
(\varepsilon,\Lambda,f)[h]
\bigr](\vartheta)
&=
\left(
    \Omega_0+\varepsilon^\alpha\Lambda
\right)
h_1'(\vartheta)
-
\frac{\varepsilon^{2\alpha}}{2\pi}
\partial_\vartheta
\left(
    I_{11}(\vartheta)
    +
    I_{12}(\vartheta)
\right)
\nonumber\\
&\quad
+
\frac{\varepsilon^{2\alpha}}{2\pi^2}
\partial_\vartheta
\sum_{j=1}^{\mathbf m}
\left(
    I_{13,j}^\sigma[h](\vartheta)
    +
    I_{14,j}^\sigma[h](\vartheta)
\right).
\end{align*}
where
\begin{align*}
I_{11}[h](\vartheta)
&:=
\int_0^{2\pi}\int_0^1
\operatorname{Re}
\left[
    \frac{
        Q_1(\vartheta)
        -
        \rho Q_1(\eta)
    }{
        \Delta_{11}(\rho,\vartheta,\eta)
    }
\right]
\rho w_1(\eta)^2
\,d\rho\,d\eta,
\\
I_{12}[h](\vartheta)&:=
2\int_0^{2\pi}\int_0^1
\log|
    \Delta_{11}(\rho,\vartheta,\eta)
|
\rho h_1(\eta)
\,d\rho\,d\eta,
\end{align*}
and
\begin{align*}
I_{13,j}^\sigma[h](\vartheta)
&:=
\int_0^{2\pi}\int_0^1
\operatorname{Re}
\left[
    \frac{
        Q_1(\vartheta)
        -
        \zeta_j^\sigma
        \varepsilon\rho Q_2(\eta)
    }{
        \Delta_{12,j}^\sigma
        (\rho,\vartheta,\eta)
    }
\right]
\rho w_2(\eta)^2
\,d\rho\,d\eta,
\\
I_{14,j}^\sigma[h](\vartheta)
&:=
2\int_0^{2\pi}\int_0^1
\log|
    \Delta_{12,j}^\sigma
    (\rho,\vartheta,\eta)
|
\rho h_2(\eta)
\,d\rho\,d\eta.
\end{align*}
For the second component one has
\begin{align}
\label{DfF2_correct}
\bigl[
D_f\mathcal F_2^\sigma
(\varepsilon,\Lambda,f)[h]
\bigr](\vartheta)
&=
\varepsilon^{1-\alpha}
\left(
    \Omega_0+\varepsilon^\alpha\Lambda
\right)
\Bigg[
    \varepsilon^{1-\alpha}h_2'(\vartheta)
    +
    \frac{h_2'(\vartheta)}{w_2(\vartheta)}
    \cos\vartheta
    -
    \varepsilon
    f_2'(\vartheta)
    \frac{h_2(\vartheta)}{w_2(\vartheta)^3}
    \cos\vartheta
\nonumber\\
&\quad
    -
    \frac{h_2(\vartheta)}{w_2(\vartheta)}
    \sin\vartheta
\Bigg]
-
\frac{1}{2\pi}
\partial_\vartheta
\left(
    I_{21}^\sigma[h](\vartheta)
    +
    I_{22}^\sigma[h](\vartheta)
\right)
\nonumber\\
&\quad
+
\frac{1}{2\pi^2}
\partial_\vartheta
\sum_{j=1}^{\mathbf m}
\left(
    I_{23,j}[h](\vartheta)
    +
    I_{24,j}[h](\vartheta)
\right),
\end{align}
where
\begin{align*}
I_{21}^\sigma[h](\vartheta)
&:=
\int_0^{2\pi}\int_0^1
\operatorname{Re}
\left[
    \frac{
        e^{i\frac{\pi\sigma}{\mathbf m}}
        \varepsilon Q_2(\vartheta)
        -
        \rho Q_1(\eta)
    }{
        \Delta_{21}^\sigma
        (\rho,\vartheta,\eta)
    }
\right]
\rho w_1(\eta)^2
\,d\rho\,d\eta,
\\
I_{22}^\sigma[h](\vartheta)
&:=
2\int_0^{2\pi}\int_0^1
\log|
    \Delta_{21}^\sigma
    (\rho,\vartheta,\eta)
|
\rho h_1(\eta)
\,d\rho\,d\eta,
\end{align*}
and
\begin{align*}
I_{23,j}[h](\vartheta)
&:=
\varepsilon
\int_0^{2\pi}\int_0^1
\operatorname{Re}
\left[
    \frac{
        Q_2(\vartheta)
        -
        \mathcal R_j\rho Q_2(\eta)
    }{
        \Delta_{22,j}
        (\rho,\vartheta,\eta)
    }
\right]
\rho w_2(\eta)^2
\,d\rho\,d\eta,
\\
I_{24,j}[h](\vartheta)
&:=
2\int_0^{2\pi}\int_0^1
\log|
    \Delta_{22,j}
    (\rho,\vartheta,\eta)
|
\rho h_2(\eta)
\,d\rho\,d\eta.
\end{align*}
We explain the argument for the outer self-interaction
$I_{11}$, which contains the principal boundary singularity. The other terms can be treated analogously. 

  Adding and subtracting an $\eta$-derivative and then integrating by parts in
$\eta$, using periodicity, gives
\begin{equation}
\label{derivative_If}
\partial_\vartheta I_{11}[h](\vartheta)
=
\int_0^{2\pi}\int_0^1
K_{f_1}[h_1](\rho,\vartheta,\eta)
\,d\rho\,d\eta,
\end{equation}
where
$$
    K_{f_1}[h_1]
    :=
    \rho w_1(\eta)^2 A_{f_1}[h_1]
    +
    2\varepsilon\rho f_1'(\eta)B_{f_1}[h_1],
$$
\begin{equation}
\label{definition_Af}
A_{f_1}[h_1]
:=
\operatorname{Re}
\left[
    \frac{
         \Delta_{11}(\rho,\vartheta,\eta)
        \bigl(
            Q_1'(\vartheta)
            -
            \rho Q_1'(\eta)
        \bigr)
        -
        \bigl( Q_1(\vartheta)-\rho Q_1(\eta)\bigr)
        \bigl(
            z_1'(\vartheta)
            -
            \rho z_1'(\eta)
        \bigr)
    }{
         \Delta_{11}(\rho,\vartheta,\eta)^2
    }
\right],
\end{equation}
and
$$
    B_{f_1}[h_1](\rho,\vartheta,\eta)
    :=
   \operatorname{Re}
\left[
    \frac{
        Q_1(\vartheta)
        -
        \rho Q_1(\eta)
    }{
        \Delta_{11}(\rho,\vartheta,\eta)
    }
\right].
$$
For $f$ in the prescribed ball, the uniform chord--arc estimate gives
$$
    | \Delta_{11}(\rho,\vartheta,\eta)|
    \geq
    c\, \left[
        (1-\rho)^2
        +
        4\rho
        \sin^2\left(
            \frac{\vartheta-\eta}{2}
        \right)
    \right]^{1/2}.
$$
Consequently, the kernels in
\eqref{derivative_If} satisfy estimates of the form
$$
    |K_{f_1}[h_1](\rho,\vartheta,\eta)|
    \leq
    C\|h_1\|_{C^{1,\nu}}
     \left|
        \sin\left(
            \frac{\vartheta-\eta}{2}
        \right)
    \right|^{\nu-1}
    G_1(\rho),
$$
and
$$
    |\partial_\vartheta
    K_{f_1}[h_1](\rho,\vartheta,\eta)|
    \leq
    C\|h_1\|_{C^{1,\nu}}
     \left|
        \sin\left(
            \frac{\vartheta-\eta}{2}
        \right)
    \right|^{\nu-2}
    G_2(\rho),
$$
where $G_1,G_2\in L^1(0,1)$ are the radial weights appearing in the proof
of Proposition \ref{Fwelldef}. Lemma \ref{lemmasurvey} therefore gives
\begin{equation}
\label{bound_Df_I11}
\left\|
    \partial_\vartheta I_{11}[h]
\right\|_{C^{0,\nu}}
\leq
C\|h_1\|_{C^{1,\nu}}.
\end{equation}
Let us now check the continuity of the derivative in operator norm.
 Let
$f,g$ belong to the prescribed ball. By adding and subtracting crossed terms we get

\begin{equation*}
\begin{split}
    K_{f_1}(\rho,\vartheta,\eta)-K_{g_1}(\rho,\vartheta,\eta) 
    &=\rho w_1^2[f_1](\eta)[A_{f_1}(\rho,\vartheta,\eta)-A_{g_1}(\rho,\vartheta,\eta)] \\
    &\quad+ \rho (w_1^2[f_1](\eta)-w_1^2[g_1](\eta))A_{g_1}(\rho,\vartheta,\eta)  \\
    &\quad +2\varepsilon \rho (f_1'(\eta)-g_1'(\eta))B_{f_1}(\rho,\vartheta,\eta)\\
    &\quad+2\varepsilon \rho g_1'(\eta)[B_{f_1}(\rho,\vartheta,\eta)-B_{g_1}(\rho,\vartheta,\eta)].
\end{split}
\end{equation*}
Here we use the notation $w_1[f_1](\vartheta):=\sqrt{1+2\varepsilon f_1(\vartheta)}$ to stress the dependence on $f$.
We treat the last term; the remaining three terms are handled similarly. 
The quotient identity
$$
 \frac{
        Q_1[{f_1}]
    }{
        \Delta_{11}[f_1]
    }
-
 \frac{
        Q_1[{g_1}]
    }{
        \Delta_{11}[g_1]
    }
=
 \frac{
        Q_1[{f_1}]-Q_1[{g_1}]
    }{
        \Delta_{11}[f_1]
    }
    +
    \frac{
        Q_1[g_1](\Delta_{11}[g_1]-\Delta_{11}[f_1])
    }{
        \Delta_{11}[f_1]\Delta_{11}[g_1],
    }
$$
implies
\begin{equation}
\label{difference_Bf_Bg}
B_{f_1}[h_1]-B_{g_1}[h_1]
=
\operatorname{Re}
\left[
    \frac{
        Q_1[{f_1}]-Q_1[{g_1}]
    }{
        \Delta_{11}[f_1]
    }
    +
    \frac{
        Q_1[g_1](\Delta_{11}[g_1]-\Delta_{11}[f_1])
    }{
        \Delta_{11}[f_1]\Delta_{11}[g_1]
    }
\right].
\end{equation}
The relevant differences satisfy
\begin{align}
\label{difference_zfg}
&
\left|
    \bigl(
        z_1[f_1]-z_1[g_1]
    \bigr)(\vartheta)
    -
    \rho
    \bigl(
        z_1[f_1]-z_1[g_1]
    \bigr)(\eta)
\right|
\nonumber\\
&\qquad
\leq
C\varepsilon
\|f_1-g_1\|_{C^{1,\nu}}
\left[
        (1-\rho)^2
        +
        4\rho
        \sin^2\left(
            \frac{\vartheta-\eta}{2}
        \right)
    \right]^{1/2},
\end{align}
and
\begin{align}
\label{difference_qfg}
&
\left|
    \bigl(
        Q_1[f_1]-Q_1[g_1]
    \bigr)(\vartheta)
    -
    \rho
    \bigl(
        Q_1[f_1]-Q_1[g_1]
    \bigr)(\eta)
\right|
\nonumber\\
&\qquad
\leq
C\varepsilon
\|h_1\|_{C^{1,\nu}}
\|f_1-g_1\|_{C^{1,\nu}}
\left[
        (1-\rho)^2
        +
        4\rho
        \sin^2\left(
            \frac{\vartheta-\eta}{2}
        \right)
    \right]^{1/2}.
\end{align}
Notice that the estimate \eqref{difference_zfg} does not contain
$\|h\|_{C^{1,\nu}}$.

Combining
\eqref{difference_Bf_Bg}--\eqref{difference_qfg} with the chord--arc
estimate gives
$$
    |B_{f_1}[h_1]-B_{g_1}[h_1]|
    \leq
    C\varepsilon
    \|h_1\|_{C^{1,\nu}}
    \|f_1-g_1\|_{C^{1,\nu}}.
$$
A similar calculation, using the correct denominator
$\Delta_{11}^2$ in \eqref{definition_Af}, gives kernel estimates of the form
\begin{align*}
|K_{f_1}[h_1]-K_{g_1}[h_1]|
&\leq
C\varepsilon
\|h_1\|_{C^{1,\nu}}
\|f_1-g_1\|_{C^{1,\nu}}
\left|
        \sin\left(
            \frac{\vartheta-\eta}{2}
        \right)
    \right|^{\nu-1}
G_1(\rho),
\\
|\partial_\vartheta
(K_{f_1}[h_1]-K_{g_1}[h_1])|
&\leq
C\varepsilon
\|h_1\|_{C^{1,\nu}}
\|f_1-g_1\|_{C^{1,\nu}}
\left|
        \sin\left(
            \frac{\vartheta-\eta}{2}
        \right)
    \right|^{\nu-2}
G_2(\rho).
\end{align*}
Lemma \ref{lemmasurvey} therefore yields
$$
\left\|
    \partial_\vartheta I_{11}[f_1]
    -
    \partial_\vartheta I_{11}[g_1]
\right\|_{C^{0,\nu}}
\leq
C\varepsilon
\|f_1-g_1\|_{C^{1,\nu}}
\|h_1\|_{C^{1,\nu}}.
$$
Taking the supremum over
$\|h_1\|_{C^{1,\nu}}\leq1$ proves continuity in operator norm for this
self-interaction term.
The inner self-interaction satisfies the same type of estimate. For the
outer--inner and distinct inner--inner interactions, continuity follows
from the smooth dependence of the separated kernels and boundary
parametrizations on $(\varepsilon,\Lambda,f)$. It follows that
\[
D_f\mathcal F^\sigma
\in
C\left(
    (0,\varepsilon_0)\times U;
    \mathcal L(X,Y)
\right).
\]
Together with \eqref{partial_Lambda_F}, this gives
\begin{equation}
\label{positive_epsilon_derivative_continuity}
D_{(\Lambda,f)}\mathcal F^\sigma
\in
C\left(
    (0,\varepsilon_0)\times U;
    \mathcal L(E,Y)
\right).
\end{equation}
\medskip

\noindent
\textbf{Derivative at $\varepsilon=0$.}
By Proposition \ref{Fwelldef},
\[
    \mathcal F^\sigma(0,\Lambda,f)
    =
    \left(
        \Omega_0f_1',
        \mathscr Lf_2-\Lambda\sin\vartheta
    \right).
\]
This map is affine in $(\Lambda,f)$ and is therefore Fréchet
differentiable. Its derivative is precisely
\[
D_{(\Lambda,f)}
\mathcal F^\sigma(0,\Lambda,f)
\bigl[
    \widehat\Lambda,(h_1,h_2)
\bigr]
=
\left(
    \Omega_0h_1',
    \mathscr Lh_2-\widehat\Lambda\sin\vartheta
\right).
\]

\medskip

\noindent
\textbf{Continuity of the derivative at $\varepsilon=0$.}
We first consider the $\Lambda$-derivative. From
\eqref{partial_Lambda_F},
\begin{align*}
&
D_\Lambda\mathcal F^\sigma
(\varepsilon,\Lambda,f)
-
\left(
    0,-\sin\vartheta
\right)
=
\Bigg(
    \varepsilon^\alpha f_1',
    \varepsilon^{2-\alpha}f_2'
    +
    \varepsilon
    \frac{f_2'}{w_2}
    \cos\vartheta
    -
    (w_2-1)\sin\vartheta
\Bigg).
\end{align*}
Since
\[
    \|w_2-1\|_{C^{1,\nu}}
    \leq
    C\varepsilon\|f_2\|_{C^{1,\nu}},
\]
we obtain
\begin{equation}
\label{Lambda_derivative_extension}
\left\|
D_\Lambda\mathcal F^\sigma
(\varepsilon,\Lambda,f)
-
\left(
    0,-\sin\vartheta
\right)
\right\|_Y
\leq
C\left(
    \varepsilon^\alpha+\varepsilon
\right).
\end{equation}

We next examine the derivative with respect to $f$. 
For the first component, the local contribution satisfies
\[
\begin{aligned}
&
D_{(\Lambda,f)}
\left[
    \left(
        \Omega_0+\varepsilon^\alpha\Lambda
    \right)f_1'
\right]
\bigl[
    \widehat\Lambda,h
\bigr]
-
\Omega_0h_1'
=
\varepsilon^\alpha
\left(
    \widehat\Lambda f_1'
    +
    \Lambda h_1'
\right).
\end{aligned}
\]
The differentiated outer self-interaction is bounded by
\[
    C\varepsilon^{2\alpha}\|h_1\|_{C^{1,\nu}},
\]
as follows from \eqref{bound_Df_I11}. Differentiating the regularized
outer--inner decomposition, used in the proof of Proposition \ref{Fwelldef}, and using the cancellation over the regular
polygon gives
\[
\begin{aligned}
&
\left\|
D_{(\Lambda,f)}
\mathcal F_1^\sigma
(\varepsilon,\Lambda,f)
\bigl[
    \widehat\Lambda,h
\bigr]
-
\Omega_0h_1'
\right\|_{C^{0,\nu}}
\leq
C\left(
    \varepsilon^\alpha
    +
    \varepsilon^{2\alpha}
    +
    \varepsilon^{(\mathbf m+2)\alpha-1}
\right)
\big(|\widehat\Lambda|+\|h\|_{C^{1,\nu}}\big).
\end{aligned}
\]
\begin{equation}
\label{derivative_extension_F1}
\end{equation}
The proof of continuity of the second component contribution follows the same line of Proposition \ref{Fwelldef}. leading to
\begin{align}
\label{derivative_extension_F2}
&
\left\|
D_{(\Lambda,f)}
\mathcal F_2^\sigma
(\varepsilon,\Lambda,f)
\bigl[
    \widehat\Lambda,h
\bigr]
-
\left(
    \mathscr L[h_2]
    -
    \widehat\Lambda\sin\vartheta
\right)
\right\|_{C^{0,\nu}}
\leq
C\left(
    \varepsilon^\alpha
    +
    \varepsilon
    +
    \varepsilon^{1-2\alpha}
\right)
\big(|\widehat\Lambda|+\|h\|_{C^{1,\nu}}\big),
\end{align}
with
$$
    \mathscr L[h_2]=\frac{1}{2\pi}\mathcal{H}[h_2] +\frac{1}{2\pi} 
 h_2'.
$$
Therefore, the derivative admits a continuous extension to
$\varepsilon=0$. Together with
\eqref{positive_epsilon_derivative_continuity}, this proves that
\[
D_{(\Lambda,f)}\mathcal F^\sigma
\in
C\left(
    [0,\varepsilon_0)\times U;
    \mathcal L(\mathbb R\times X,Y)
\right).
\]
This completes the proof.

\end{proof}

\section{Invertibility of the linearized operator and proof of the main theorem}\label{sec:linop}

In this section, we show that the linearization of
the nonlinear operator at the singular limiting configuration
$
    (\varepsilon,\Lambda,f)=(0,0,0),
$  admits a diagonal Fourier representation. 
This is what will ultimately allow
us to prove  that
$$
    D_{(\Lambda,f)}\mathcal F^\sigma(0,0,0):
    \mathbb R\times X\longrightarrow Y,
$$ is an isomorphism and complete the proof of Theorem \ref{main_theo}.
Indeed, by Proposition \ref{pro:F_differentiable}, the opertor
$$
    \mathcal L
    :=
    D_{(\Lambda,f)}
    \mathcal F^\sigma(0,0,0):
    \mathbb R\times X
    \longrightarrow Y
$$
is the bounded linear operator, given by
\begin{equation}
\label{lin-op}
    \mathcal L[\lambda,h]
    =
    \bigl(
        \mathcal L_1[\lambda,h],
        \mathcal L_2[\lambda,h]
    \bigr),
\end{equation}
where
\begin{equation}
\label{linearized_operator_components}
\begin{aligned}
    \mathcal L_1[\lambda,h](\vartheta)
    &=
    \Omega_0 h_1'(\vartheta),
    \\[1mm]
    \mathcal L_2[\lambda,h](\vartheta)
    &=
    -\lambda\sin\vartheta
    +
    \frac{1}{2\pi}
    \left(
        h_2'(\vartheta)
        +
        \mathcal H[h_2](\vartheta)
    \right).
\end{aligned}
\end{equation}
 To make the Fourier structure of $\mathcal{L}$ explicit, let
$$
    h_1(\vartheta)
    =
    \sum_{k=1}^{\infty}
    h_{1,k}\cos(k\mathbf m\vartheta),
    \qquad
    h_2(\vartheta)
    =
    \sum_{n=2}^{\infty}
    h_{2,n}\cos(n\vartheta).
$$
Using
$$
    \Omega_0
    =
    \frac{1-\mathbf m}{4\pi}\quad \textnormal{and}\quad
    \mathcal H[\cos(n\vartheta)]
    =
    \sin(n\vartheta),
$$
we obtain
\begin{align}
\label{linearized_operator_Fourier}
    \mathcal L_1[\lambda,h](\vartheta)
    &=
    \frac{\mathbf m-1}{4\pi}
    \sum_{k=1}^{\infty}
    k\mathbf m\,a_k
    \sin(k\mathbf m\vartheta),
    \nonumber\\
    \mathcal L_2[\lambda,h](\vartheta)
    &=
    -\lambda\sin\vartheta
    -
    \frac{1}{2\pi}
    \sum_{n=2}^{\infty}
    (n-1)b_n\sin(n\vartheta).
\end{align}
Thus, $\mathcal L$ is diagonal with respect to the Fourier bases defining
the spaces $X$ and $Y$. The scalar parameter $\lambda$ generates the first
sine mode in the second component, whereas the boundary perturbation $h_2$
generates all the higher sine modes.

We now establish the final property required for the application of the
implicit function theorem.

\begin{pro}
\label{pro:linearized_isomorphism}
The linear operator
$$
    \mathcal L:
    \mathbb R\times X
    \longrightarrow Y,
$$
defined in \eqref{lin-op} is a bounded isomorphism.
\end{pro}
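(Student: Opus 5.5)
The plan is to use the diagonal Fourier representation \eqref{linearized_operator_Fourier}. I would prove injectivity and surjectivity directly, with Hölder regularity of the preimage obtained by a bootstrap. The bounded inverse then follows from the open mapping theorem, or from the explicit estimates below.

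\textbf{Boundedness and injectivity.} Boundedness of $\mathcal L$ is already contained in Proposition \ref{pro:F_differentiable}, since $h\mapsto h'$ and $\mathcal H$ are bounded from $C^{1,\nu}$ to $C^{0,\nu}$ (Privalov's theorem).

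For injectivity, suppose $\mathcal L[\lambda,h]=0$ with
$h_1=\sum_{k\ge1}a_k\cos(k\mathbf m\vartheta)$ and $h_2=\sum_{n\ge2}b_n\cos(n\vartheta)$. Comparing sine coefficients in \eqref{linearized_operator_Fourier} gives
$$\frac{\mathbf m-1}{4\pi}k\mathbf m\,a_k=0,\qquad \lambda=0,\qquad (n-1)b_n=0 \quad (n\ge2).$$
Since $\mathbf m\ge2$, $k\ge1$ and $n-1\ge1$, all coefficients vanish. Hence $\lambda=0$ and $h=0$.

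\textbf{Surjectivity, first component.} Let $(g_1,g_2)\in Y$. Since $g_1$ is odd, it has zero mean, so its primitive is periodic. I set $h_1:=\Omega_0^{-1}\int_0^\vartheta g_1$ and subtract its mean. Then $h_1'=g_1/\Omega_0\in C^{0,\nu}$, so $h_1\in C^{1,\nu}$ with $\|h_1\|_{C^{1,\nu}}\le C\|g_1\|_{C^{0,\nu}}$. The primitive of an odd function is even, and it inherits $\frac{2\pi}{\mathbf m}$-periodicity because $g_1$ has zero mean on each period (by oddness combined with periodicity). Thus $h_1\in\mathcal X_{1,\mathbf m}$.

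\textbf{Surjectivity, second component.} I write $g_2=\sum_{n\ge1}g_{2,n}\sin(n\vartheta)$ and take $\lambda:=-g_{2,1}$. Then $\tilde g:=g_2-g_{2,1}\sin\vartheta\in C^{0,\nu}$ is odd with no first mode, and I must solve
$$h_2'+\mathcal H[h_2]=2\pi\tilde g,$$
whose formal solution is $b_n=-2\pi g_{2,n}/(n-1)$ for $n\ge2$. Since $\tilde g\in L^2$, we have $\sum n^2|b_n|^2<\infty$, so $h_2\in H^1(\mathbb T)\subset C^{0,1/2}$. This series is even, has zero mean and no $\cos\vartheta$ mode, so once the regularity is shown it lies in $\mathcal X_2$. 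The regularity comes from a bootstrap on $h_2'=2\pi\tilde g-\mathcal H[h_2]$:
\begin{enumerate}[label=(\roman*)]
\item Since $\mathcal H$ preserves $C^{0,\beta}$ for $0<\beta<1$, we get $\mathcal H[h_2]\in C^{0,1/2}$.
\item The identity then gives $h_2'\in C^{0,\min(\nu,1/2)}$, so $h_2\in C^1$.
\item Hence $h_2\in C^{0,\beta}$ for every $\beta<1$, so $\mathcal H[h_2]\in C^{0,\nu}$.
\item Therefore $h_2'\in C^{0,\nu}$, i.e. $h_2\in C^{1,\nu}$.
\end{enumerate}
Tracking constants through these steps (each step is a bounded map, starting from the $H^1$ bound) gives $\|h_2\|_{C^{1,\nu}}\le C\|g_2\|_{C^{0,\nu}}$. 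Together with the first component and $|\lambda|\le C\|g_2\|_{C^{0,\nu}}$, this makes the inverse bounded.

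\textbf{Main obstacle.} The delicate point is this Hölder regularity. Fourier multipliers are not bounded on $C^0$, so regularity must come from Privalov's theorem through the bootstrap rather than from coefficient decay. The symmetry and mode bookkeeping is routine.
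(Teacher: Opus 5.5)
Your proof is correct. Its skeleton matches the paper's: Fourier diagonalization, the choice $\lambda=-g_{2,1}$, mode-by-mode inversion, injectivity by comparing sine coefficients, and an explicit zero-average primitive for $h_1$. The genuine difference is how you obtain $h_2\in C^{1,\nu}$.

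\textbf{The paper's route.} It introduces the order $-1$ multiplier $\mathcal S:\sin(n\vartheta)\mapsto (n-1)^{-1}\sin(n\vartheta)$ for $n\geq2$. It asserts that the convolution kernel of $\mathcal S$ has only a logarithmic singularity, hence lies in $L^1(\mathbb T)$, so that $\mathcal S$ is bounded on $C^{0,\nu}$. The regularity then follows in one step from the identities $h_2=2\pi\mathcal H[\mathcal S g_{2,\geq2}]$ and $h_2'=2\pi(g_{2,\geq2}+\mathcal S g_{2,\geq2})$.

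\textbf{Your route.} You start from the Parseval bound $h_2\in H^1(\mathbb T)\subset C^{0,1/2}$. You then bootstrap through $h_2'=2\pi\tilde g-\mathcal H[h_2]$, using only Privalov's theorem. Since $\mathcal H[h_2]=-2\pi\mathcal S[\tilde g]$, your bootstrap is in effect a proof of the mapping property of $\mathcal S$ that the paper reads off from its kernel.

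\textbf{Comparison.} The paper's argument is shorter, but it leaves the integrability of the kernel as an unproved assertion. The claim does hold: on $(0,2\pi)$ the kernel is a multiple of
$\sum_{n\geq2}\frac{\cos(nx)}{n-1}=-\cos x\,\log\left|2\sin\frac{x}{2}\right|-\frac{\pi-x}{2}\sin x$.
Your argument needs no kernel computation and uses only standard facts, at the cost of two bootstrap steps. Both routes give the explicit estimate $\|h_2\|_{C^{1,\nu}}\le C\|g_2\|_{C^{0,\nu}}$. So, as you note, the bounded inverse follows directly without the open mapping theorem.
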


\begin{proof}
The boundedness of $\mathcal L$ follows directly from
\eqref{linearized_operator_components}, since the periodic Hilbert transform is bounded on
$C^{0,\nu}(\mathbb T)$.
We next prove that $\mathcal L$ is bijective and identify its inverse.
Let
$$
    g=(g_1,g_2)\in Y.
$$
By the definitions of $\mathcal Y_{1,\mathbf m}$ and $\mathcal Y_2$, we may
write
$$
    g_1(\vartheta)
    =
    \sum_{k=1}^{\infty}
    g_{1,k}\sin(k\mathbf m\vartheta),
\qquad
    g_2(\vartheta)
    =
    g_{2,1}\sin\vartheta
    +
    \sum_{n=2}^{\infty}
    g_{2,n}\sin(n\vartheta).
$$
We seek
$$
    (\lambda,h)
    =
    (\lambda,h_1,h_2)
    \in\mathbb R\times X
$$
such that
$$
    \mathcal L[\lambda,h]=g.
$$
For the first component, define
\begin{equation}
\label{inverse_h1}
    h_1(\vartheta)
    :=
    \frac{4\pi}{\mathbf m-1}
    \sum_{k=1}^{\infty}
    \frac{g_{1,k}}{k\mathbf m}
    \cos(k\mathbf m\vartheta).
\end{equation}
This is the unique zero-average, even, $\frac{2\pi}{\mathbf m}$-periodic
primitive of $g_1/\Omega_0$. In particular,
$
    h_1\in\mathcal X_{1,\mathbf m},
$
and
$$
\begin{aligned}
    \Omega_0h_1'(\vartheta)
    &=
    \frac{1-\mathbf m}{4\pi}
    \left(
        -\frac{4\pi}{\mathbf m-1}
        \sum_{k=1}^{\infty}
        g_{1,k}\sin(k\mathbf m\vartheta)
    \right)
    =
    g_1(\vartheta).
\end{aligned}
$$
Moreover, the periodic primitive estimate gives
\begin{equation}
\label{estimate_h1_inverse}
    \|h_1\|_{C^{1,\nu}}
    \leq
    C\|g_1\|_{C^{0,\nu}}.
\end{equation}
We now consider the second component. We first write
$$
g_2(\vartheta)=g_{2,1}\sin(\vartheta)+\sum_{n=2}^{\infty}g_{2,n}\sin(n\vartheta)=:g_{2,1}\sin(\vartheta)+g_{\geq 2}(\vartheta).
$$
Then, we can define the preimage of $g_2$ as 
\begin{equation}
\label{inverse_lambda}
    \lambda
    :=
    -g_{2,1},
\end{equation}
and
\begin{equation}
\label{inverse_h2}
    h_2(\vartheta)
    :=
    -2\pi
    \sum_{n=2}^{\infty}
    \frac{g_{2,n}}{n-1}
    \cos(n\vartheta).
\end{equation}
To verify that $h_2\in C^{1,\nu}(\mathbb T)$,  introduce the periodic Fourier multiplier
$$
    \mathcal S[g_{2,\geq2}](\vartheta)
    :=
    \sum_{n=2}^{\infty}
    \frac{g_{2,n}}{n-1}
    \sin(n\vartheta).
$$
The multiplier $(n-1)^{-1}$ is of order $-1$. Equivalently, its periodic
convolution kernel has at most a logarithmic singularity and belongs to
$L^1(\mathbb T)$. It follows that
$$
    \mathcal S:
    C^{0,\nu}(\mathbb T)
    \longrightarrow
    C^{0,\nu}(\mathbb T),
$$
is bounded. Using the Fourier action of the Hilbert transform, we may write
$$
    h_2
    =
    2\pi\mathcal H
    \bigl[
        \mathcal S[g_{2,\geq2}]
    \bigr],
$$
and
$$
    h_2'
    =
    2\pi
    \left(
        g_{2,\geq2}
        +
        \mathcal S[g_{2,\geq2}]
    \right).
$$
Since both $\mathcal H$ and $\mathcal S$ are bounded on
$C^{0,\nu}(\mathbb T)$, we conclude that
$
    h_2\in C^{1,\nu}(\mathbb T)
$
and
\begin{equation}
\label{estimate_h2_inverse}
    \|h_2\|_{C^{1,\nu}}
    \leq
    C
    \|g_{2,\geq2}\|_{C^{0,\nu}}
    \leq
    C\|g_2\|_{C^{0,\nu}}.
\end{equation}
It remains to verify that the functions defined above solve the second equation. By \eqref{inverse_h2}  and \eqref{Hilbert_Fourier_action},
$$
    \frac{1}{2\pi}
    \left(
        h_2'
        +
        \mathcal H[h_2]
    \right)(\vartheta)
    =
    \sum_{n=2}^{\infty}
    g_{2,n}\sin(n\vartheta).
$$
Together with \eqref{linearized_operator_Fourier} and \eqref{inverse_lambda}, this gives
$$
\begin{aligned}
    \mathcal L_2[\lambda,h](\vartheta)
    &=
    -\lambda\sin\vartheta
    +
    \frac{1}{2\pi}
    \left(
        h_2'(\vartheta)
        +
        \mathcal H[h_2](\vartheta)
    \right)
    \\
    &=
    g_{2,1}\sin\vartheta
    +
    \sum_{n=2}^{\infty}
    g_{2,n}\sin(n\vartheta)
    \\
    &=
    g_2(\vartheta).
\end{aligned}
$$
Therefore, $\mathcal L$ is surjective.
The Fourier representation \eqref{linearized_operator_Fourier} also shows
that the preimage is unique. Indeed, if
$$
    \mathcal L[\lambda,h]=0,
$$
then the first component gives $h_1=0$, while the first sine mode of the
second component gives $\lambda=0$. Since $n-1\neq0$ for every $n\geq2$,
all the Fourier coefficients of $h_2$ also vanish. Hence,
$$
    (\lambda,h)=(0,0).
$$
Thus, $\mathcal L$ is injective.

Finally, \eqref{estimate_h1_inverse},
\eqref{estimate_h2_inverse}, and the estimate
$$
    |g_{2,1}|
    \leq
    C\|g_2\|_{C^{0,\nu}}
$$
yield
$$
    |\lambda|+\|h\|_X
    \leq
    C\|g\|_Y.
$$
Therefore, $\mathcal L^{-1}:Y\to\mathbb R\times X$ is bounded, and
$\mathcal L$ is a bounded isomorphism.
\end{proof}

\begin{proof}[Proof of Theorem \ref{main_theo}]
Fix $\sigma\in\{0,1\}$ and regard the parameter $\varepsilon$ as belonging
to the one-sided metric space
$$
    [0,\varepsilon_0).
$$
We apply the parameter-dependent implicit function theorem
\cite[Theorem~5F.4, p.~305]{DontchevRockafellar2014}. Let
$$
    E:=\mathbb R\times X,
$$
and recall that
$$
    U
    =
    (-\Lambda_0,\Lambda_0)
    \times
    B_X(0,r)
    \subset E,
$$
is an open neighborhood of $(0,0)$. By Corollary
\ref{cor_trivial_sol},
$$
    \mathcal F^\sigma(0,0,0)=0.
$$
Moreover, Proposition \ref{Fwelldef} and Proposition
\ref{pro:F_differentiable} show that
$$
    \mathcal F^\sigma:
    [0,\varepsilon_0)\times U
    \longrightarrow Y,
$$
is continuous in all its variables and continuously Fréchet differentiable
with respect to $(\Lambda,f)$. More precisely,
$$
    D_{(\Lambda,f)}\mathcal F^\sigma:
    [0,\varepsilon_0)\times U
    \longrightarrow
    \mathcal L(E,Y),
$$
is continuous. Finally, Proposition \ref{pro:linearized_isomorphism} shows that
$$
    D_{(\Lambda,f)}
    \mathcal F^\sigma(0,0,0)
    =
    \mathcal L:
    E\longrightarrow Y,
$$
is a bounded isomorphism. All the hypotheses of the
parameter-dependent implicit function theorem are therefore satisfied. Consequently, there exist $\varepsilon_\ast\in(0,\varepsilon_0)$ and a
unique continuous map
$$
    [0,\varepsilon_\ast)
    \longrightarrow
    U,
    \qquad
    \varepsilon
    \longmapsto
    \bigl(
        \Lambda(\varepsilon),
        f(\varepsilon)
    \bigr),
$$
such that
$$
    \Lambda(0)=0,
    \qquad
    f(0)=0,
$$
and
\begin{equation}
\label{solution_branch_F}
    \mathcal F^\sigma
    \left(
        \varepsilon,
        \Lambda^\sigma(\varepsilon),
        f(\varepsilon)
    \right)
    =
    0,
    \qquad
    \varepsilon\in[0,\varepsilon_\ast).
\end{equation}
This solution is locally unique in the symmetry class encoded by the spaces
$X$ and $Y$.

Write
$$
    f(\varepsilon)
    =
    \left(
        f_1(\varepsilon),
        f_2(\varepsilon)
    \right),
$$
and define
$$
    w_j(\varepsilon,\vartheta)
    :=
    \sqrt{
        1+
        2\varepsilon
        f_j(\varepsilon,\vartheta)
    },
    \qquad j\in\{1,2\}.
$$
For $\varepsilon>0$, these radial functions determine the generating
domains
$$
    \mathcal D_j^{\varepsilon}
    =
    \left\{
        r e^{i\vartheta}:
        \vartheta\in\mathbb T,\;
        0\leq r<
        w_j(\varepsilon,\vartheta)
    \right\},
    \qquad j\in\{1,2\},
$$
and the remaining inner components are obtained through the rotations
defined in \eqref{def_D2j}. The associated angular velocity is
$$
    \Omega_\varepsilon
    =
    \frac{\Omega_0}{\varepsilon^{2\alpha}}
    +
    \frac{
        \Lambda(\varepsilon)
    }{
        \varepsilon^\alpha
    }.
$$

By the equivalence between the nonlinear equation
\eqref{solution_branch_F} and the reduced boundary system
\eqref{Last_System}, these domains define a uniformly rotating vortex
configuration with angular velocity $\Omega_\varepsilon$. Applying
the argument for each $\sigma\in\{0,1\}$ gives the two
reflection-symmetric configurations asserted in Theorem
\ref{main_theo}. This completes the proof.
\end{proof}

\appendix

\section{Technical results and integral formulas} \label{appendix_techinal_results}
The following lemma provides a Hölder regularity estimate for a class of
weakly singular integral operators on the torus. Its proof follows by adapting
the arguments in \cite{GHM22-1,G26} to the periodic setting.

\begin{lem}[A Hölder estimate for weakly singular kernels]
\label{lemmasurvey}
Let
$$
    K:[0,1]\times\mathbb T\times\mathbb T
    \longrightarrow\mathbb C
$$
be a measurable kernel, differentiable with respect to $\vartheta$ away
from the diagonal $\vartheta=\eta$. Assume that, for some
$\nu\in(0,1)$,
$$
    |K(\rho,\vartheta,\eta)|
    \leq
    C_0
    \left|
        \sin\left(\frac{\vartheta-\eta}{2}\right)
    \right|^{\nu-1}
    g_1(\rho),
$$
and
$$
    |\partial_\vartheta K(\rho,\vartheta,\eta)|
    \leq
    C_0
    \left|
        \sin\left(\frac{\vartheta-\eta}{2}\right)
    \right|^{\nu-2}
    g_2(\rho),
$$
where $g_1,g_2\in L^1(0,1)$.

Then the operator
$$
    \mathcal K h(\vartheta)
    :=
    \int_0^{2\pi}\int_0^1
    K(\rho,\vartheta,\eta)h(\eta)
    \,d\rho\,d\eta
$$
defines a bounded linear map
$$
    \mathcal K:
    L^\infty(\mathbb T)
    \longrightarrow
    C^{0,\nu}(\mathbb T).
$$
More precisely,
$$
    \|\mathcal Kh\|_{C^{0,\nu}}
    \leq
    C C_0
    \left(
        \|g_1\|_{L^1(0,1)}
        +
        \|g_2\|_{L^1(0,1)}
    \right)
    \|h\|_{L^\infty},
$$
where $C>0$ depends only on $\nu$.
\end{lem}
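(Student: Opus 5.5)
The plan is to estimate $\mathcal Kh$ pointwise and then its Hölder seminorm, integrating out $\rho$ first. Since $g_1,g_2$ do not depend on $\vartheta$ or $\eta$, Fubini reduces everything to one–dimensional estimates on $\mathbb T$ for the majorants $|\sin((\vartheta-\eta)/2)|^{\nu-1}$ and $|\sin((\vartheta-\eta)/2)|^{\nu-2}$, multiplied by $\|g_1\|_{L^1}$ or $\|g_2\|_{L^1}$. For the sup norm, I would bound
\[
|\mathcal Kh(\vartheta)|\le C_0\|h\|_{L^\infty}\|g_1\|_{L^1}\int_0^{2\pi}\Big|\sin\Big(\frac{\vartheta-\eta}{2}\Big)\Big|^{\nu-1}d\eta\le C C_0\|g_1\|_{L^1}\|h\|_{L^\infty}.
\]
The integral is finite because $\nu-1>-1$ and $|\sin(x/2)|\ge |x|/\pi$ on $[-\pi,\pi]$.

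For the Hölder seminorm, I fix $\vartheta_1\neq\vartheta_2$ with $\delta:=|\vartheta_1-\vartheta_2|$ (distance on $\mathbb T$, so $\delta\le\pi$). I split $\mathbb T$ into the near region $N:=\{\eta:|\eta-\vartheta_1|<2\delta\}$ and its complement. On $N$ I would not use differences. Instead I bound each of $\mathcal Kh(\vartheta_1)$ and $\mathcal Kh(\vartheta_2)$ separately by the first kernel bound. Since $N\subset\{|\eta-\vartheta_2|<3\delta\}$, each contributes at most
\[
C_0\|g_1\|_{L^1}\|h\|_\infty\int_{|x|<3\delta}|x|^{\nu-1}dx\le CC_0\|g_1\|_{L^1}\|h\|_\infty\,\delta^\nu .
\]

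On $N^c$, I would write
\[
K(\rho,\vartheta_1,\eta)-K(\rho,\vartheta_2,\eta)=\int_{\vartheta_2}^{\vartheta_1}\partial_\vartheta K(\rho,s,\eta)\,ds .
\]
This is legitimate because the segment between $\vartheta_2$ and $\vartheta_1$ stays away from $\eta$, and $K$ is differentiable off the diagonal. For $s$ on that segment and $\eta\in N^c$ we have $|s-\eta|\ge|\eta-\vartheta_1|-\delta\ge\frac12|\eta-\vartheta_1|$, so the second kernel bound gives a majorant $C_0 g_2(\rho)\,\delta\,|\eta-\vartheta_1|^{\nu-2}$. Integrating over $\delta\le$ distance $\le\pi$ yields $\delta\int_{2\delta}^{\pi}x^{\nu-2}dx\le C\delta\cdot\delta^{\nu-1}=C\delta^\nu$, because $\nu-2<-1$. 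Collecting the near and far terms gives $[\mathcal Kh]_{C^{0,\nu}}\le CC_0(\|g_1\|_{L^1}+\|g_2\|_{L^1})\|h\|_\infty$, with $C$ depending only on $\nu$. Together with the sup bound, this proves the claim. Linearity is immediate, and measurability of $\mathcal Kh$ follows from Fubini since the majorant is integrable.

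The only delicate point is the far-region step: justifying the mean value formula and the comparison $|s-\eta|\gtrsim|\eta-\vartheta_1|$ uniformly on the torus. That is, I must make sure the segment from $\vartheta_2$ to $\vartheta_1$ is taken along the shorter arc, and that the periodic distances are compared through $|\sin(x/2)|\simeq|x|$ for $|x|\le\pi$. I would handle this by working with a lift of the shorter arc and reducing $\eta$ modulo $2\pi$ to $[\vartheta_1-\pi,\vartheta_1+\pi]$. The case $\delta\ge\pi/2$ is trivial from the sup bound.
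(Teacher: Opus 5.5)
Your proof is correct and is essentially the intended argument. The paper gives no proof of its own and only says the lemma follows by adapting the arguments of \cite{GHM22-1,G26} to the periodic setting. Those arguments are exactly yours: integrate out $\rho$ against $g_1,g_2$, take the sup bound, then split near/far at scale $2|\vartheta_1-\vartheta_2|$ with the mean value theorem on the far region, using $|\sin(x/2)|\geq |x|/\pi$ on $[-\pi,\pi]$ and the shorter arc to handle the torus.
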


Next, we give the computations of some integrals that appear during the proofs.

\begin{lem} \label{integral}
    Let $a, b \in \C$ not both zero. Then,
   $$\frac{1}{2\pi}\int_{0}^{2\pi} \log \left|a - b e^{i \tau}\right| \, d\tau = \max \left\{\log|a|, \log|b|\right\}.$$
\end{lem}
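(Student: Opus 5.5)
The plan is to reduce the identity to the mean value property of harmonic functions, using the factorization $\log|a-be^{i\tau}|$ according to which of $|a|$, $|b|$ is larger. I will split into three cases: $|a|>|b|$, $|b|>|a|$, and $|a|=|b|\neq 0$.

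\emph{Case $|a|>|b|$.} Here $a\neq0$, and I write
$$
\log\left|a-be^{i\tau}\right| = \log|a| + \log\left|1-\tfrac{b}{a}e^{i\tau}\right|.
$$
The function $z\mapsto \log|1-cz|$ with $c=b/a$ and $|c|<1$ is harmonic on a neighborhood of the closed unit disk, since $1-cz\neq0$ for $|z|\leq1$; indeed it is the real part of the holomorphic function $\log(1-cz)$ defined by its power series. The mean value property over the unit circle then gives the value $\log|1-c\cdot0|=0$. Alternatively, one can expand $\log|1-cz|=-\operatorname{Re}\sum_{n\geq1}c^nz^n/n$, which converges uniformly on $|z|=1$, and integrate term by term. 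Either way the integral equals $\log|a|$.

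\emph{Case $|b|>|a|$.} Now $b\neq0$, and I write
$$
\log|a-be^{i\tau}| = \log|b| + \log\left|1-\tfrac{a}{b}e^{-i\tau}\right|.
$$
The substitution $\tau\mapsto-\tau$ reduces this to the previous case, so the integral equals $\log|b|$.

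\emph{Case $|a|=|b|\neq0$.} This is the only step needing care, because the integrand has a logarithmic singularity at one point, though it remains integrable. After factoring out $\log|a|$ and rotating $\tau$, it suffices to prove that $\int_0^{2\pi}\log|1-e^{i\tau}|\,d\tau=0$. I plan to apply the case $|c|<1$ with $c=r\in(0,1)$ and let $r\to1^-$. For the passage to the limit I use dominated convergence: the bound $|1-re^{i\tau}|\geq|\sin\tau|$ holds, and $\log|1-re^{i\tau}|\leq\log2$, so $|\log|1-re^{i\tau}||$ is dominated by $\log 2+|\log|\sin\tau||$, which is integrable. Hence the integral is $0$ and the formula gives $\log|a|=\max\{\log|a|,\log|b|\}$.

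The degenerate cases $a=0$ or $b=0$ are already included in the strict-inequality cases. The only real obstacle is justifying the limit in the equality case, which the domination above handles.
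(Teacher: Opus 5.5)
Your proof is correct. For the two strict cases it is essentially the paper's argument. The paper expands $\log|1-z|$ in a power series and integrates term by term; your mean-value formulation is equivalent, and your substitution $\tau\mapsto-\tau$ plays the role of the paper's factoring out of $-be^{i\tau}$.

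The genuine difference is the equality case $|a|=|b|$. The paper writes $|1-e^{i\sigma}|=2|\sin(\sigma/2)|$ and then quotes the classical value $\int_0^\pi\log(\sin\sigma)\,d\sigma=-\pi\log 2$. You instead obtain $\int_0^{2\pi}\log|1-e^{i\tau}|\,d\tau=0$ by letting $r\to1^-$ in the already-proved strict case and passing to the limit by dominated convergence. Your domination is valid:
\begin{itemize}
\item $|1-re^{i\tau}|^2-\sin^2\tau=(r-\cos\tau)^2\ge 0$ gives the lower bound $\log|\sin\tau|\le\log|1-re^{i\tau}|$;
\item $|1-re^{i\tau}|\le 2$ gives the upper bound;
\item pointwise convergence holds for $\tau\notin 2\pi\mathbb{Z}$.
\end{itemize}

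The paper's route is shorter but rests on an externally known integral. Yours is self-contained and in fact recovers $\int_0^\pi\log(\sin\sigma)\,d\sigma=-\pi\log 2$ as a corollary. Treating $a=0$ or $b=0$ inside the strict cases, with the convention $\log 0=-\infty$, is also fine.
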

\begin{proof}
    For simplicity, set
   $$I := \frac{1}{2\pi}\int_{0}^{2\pi} \log \left|a - b e^{i \tau}\right| \, d\tau.$$
    Recall that, for every $z \in \C$ with $|z| < 1$, the following expansion holds true:
    \begin{equation} \label{expansion_log}
        \log|1-z| = - \sum_{k = 1}^{\infty} \frac{1}{k} \operatorname{Re}\left[z^k\right].
    \end{equation}
    Case $|a| > |b|$. By collecting $a$ inside the logarithm and using \eqref{expansion_log}, we obtain 
    \begin{align*}
        I & = \frac{1}{2\pi} \int_{0}^{2\pi} \log\left|a \left(1 - \frac{b}{a} e^{i \tau} \right)\right| \, d\tau = \log|a| + \frac{1}{2\pi} \int_{0}^{2\pi} \log\left|1 - \frac{b}{a} e^{i \tau}\right| \, d \tau \\
        & = \log |a| - \frac{1}{2\pi} \sum_{k = 1}^{\infty} \frac{1}{k}  \int_{0}^{2\pi} \operatorname{Re}\left[\left(\frac{b}{a}\right)^ke^{i k \tau}\right] \, d\tau =  \log|a|,
    \end{align*}
    since 
$$\int_{0}^{2\pi} \operatorname{Re}\left[\left(\frac{b}{a}\right)^ke^{i k \tau}\right] \, d\tau = 0  \,\,\,\text{   for    }\,\,\, k\ge 1.$$
    Case $|a| < |b|$. By collecting $-be^{i \tau}$ inside the logarithm and using \eqref{expansion_log}, we obtain
    \begin{align*}
        I & = \frac{1}{2\pi} \int_{0}^{2\pi} \log \left|- b e^{i \tau} \left(1 - \frac{a}{b} e^{-i\tau} \right)\right| \, d\tau = \log|b| + \frac{1}{2\pi} \int_{0}^{2\pi} \log\left| 1 - \frac{a}{b} e^{-i\tau} \right| \, d\tau \\
        & = \log|b| - \frac{1}{2\pi} \sum_{k = 1}^{\infty} \frac{1}{k}  \int_{0}^{2\pi} \operatorname{Re}\left[\left(\frac{a}{b}\right)^ke^{-ik\tau}\right] \, d\tau = \log |b| .
    \end{align*}
    Case $|a| = |b|$. In this scenario, there is $\vartheta \in [0, 2\pi)$ such that $b = a e^{i \vartheta}$. Thus, factoring out $a$ inside the logarithm, performing the change of variable $\sigma = \tau + \vartheta$, and using the $2\pi$-periodicity of the integrand, we obtain
    \begin{align*}
        I &= \frac{1}{2\pi} \int_{0}^{2\pi} \log\left|a\left(1 - e^{i (\tau + \vartheta)}\right)\right| \, d \tau = \log|a| + \frac{1}{2\pi} \int_{0}^{2\pi} \log\left|1 - e^{i (\tau+\vartheta)}\right| \, d\tau \\
        & = \log|a| + \frac{1}{2\pi} \int_{\vartheta}^{2\pi + \vartheta} \log\left|1 - e^{i \sigma}\right| \, d\sigma = \log|a| + \frac{1}{2\pi} \int_{0}^{2\pi} \log\left|1 - e^{i \sigma}\right| \, d\sigma.
    \end{align*}
    But, by Euler's identity,
   $$\left|1 - e^{i \sigma}\right| = \left|e^{i \frac{\sigma}{2}}\left(e^{-i\frac{\sigma}{2}} - e^{i \frac{\sigma}{2}}\right)\right| = 2 \left|\sin\left(\frac{\sigma}{2}\right)\right|, \qquad \sigma \in (0, 2\pi),$$
    and therefore
    \begin{align*}
        \frac{1}{2\pi} \int_{0}^{2\pi} \log\left|1 - e^{i \sigma}\right| \, d\sigma & = \frac{1}{2\pi} \int_{0}^{2\pi} \log\left| 2 \sin\left(\frac{\sigma}{2}\right)\right| \, d\sigma = \log 2 + \frac{1}{\pi} \int_{0}^{\pi} \log \left(\sin \sigma\right) d \sigma \\
        & = \log 2 + \frac{1}{\pi} \left(-\pi \log 2 \right) = 0.
    \end{align*}
    Consequently,
   $$I = \log|a|.$$
    The cases $a = 0$ and $b = 0$ are trivial.
\end{proof}

\begin{lem} \label{integral_disk}
    Let $a \in \C$ with $|a| \leq 1$. Then,
   $$\int_{\mathbb{D}} \log\left|z-a\right| dA(z) = \frac{\pi}{2} \left(|a|^2-1\right).$$
\end{lem}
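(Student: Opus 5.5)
We need to prove that for $|a|\le1$,
$$
\int_{\mathbb D}\log|z-a|\,dA(z)=\frac{\pi}{2}\bigl(|a|^2-1\bigr).
$$
The plan is to integrate in polar coordinates and apply Lemma \ref{integral} to the angular integral on each circle.

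Write $z=re^{i\tau}$ with $0\le r<1$ and $\tau\in[0,2\pi)$. Then
$$
\int_{\mathbb D}\log|z-a|\,dA(z)=\int_0^1\left(\int_0^{2\pi}\log\left|a-re^{i\tau}\right|d\tau\right)r\,dr .
$$
By Lemma \ref{integral}, applied with $b=r$ (and with $a$, $b$ not both zero for almost every $r$), the inner integral equals $2\pi\max\{\log|a|,\log r\}$. For $a=0$ it equals $2\pi\log r$, which agrees with the same formula.

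Next we split the radial integral at $r=|a|$:
$$
2\pi\left[\log|a|\int_0^{|a|}r\,dr+\int_{|a|}^1 r\log r\,dr\right].
$$
The first piece is $\pi|a|^2\log|a|$. For the second, use the antiderivative $\int r\log r\,dr=\tfrac{r^2}{2}\log r-\tfrac{r^2}{4}$. This gives
$$
2\pi\left[-\tfrac14-\tfrac{|a|^2}{2}\log|a|+\tfrac{|a|^2}{4}\right].
$$
The $\log|a|$ terms cancel, leaving $\tfrac{\pi}{2}\bigl(|a|^2-1\bigr)$, as claimed.

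The only delicate point is justifying Fubini and the polar splitting, since $\log|z-a|$ is singular at $z=a$. The function $\log|z-a|$ is locally integrable in the plane, so Fubini/Tonelli applies to $|\log|z-a||$ on the bounded disk. The set of radii where the boundary case $|a|=r$ of Lemma \ref{integral} is used has measure zero, and that case is covered by the lemma in any event. The case $a=0$ can alternatively be obtained by continuity in $a$ or checked directly as above.
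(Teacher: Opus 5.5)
Your proof is correct and matches the paper's argument step for step: polar coordinates, Lemma~\ref{integral} for the angular integral, a split of the radial integral at $r=|a|$, and the antiderivative of $r\log r$. Your remarks on Fubini, the case $a=0$, and the measure-zero radius $r=|a|$ are a little more careful than the paper, which simply invokes Fubini's theorem.
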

\begin{proof}
    Passing to radial coordinates and applying Fubini's theorem, it results
    \begin{align} \label{sum_I_j_a}
        \int_{\D} \log\left|z-a\right| \, dA(z) & = \int_{0}^{2\pi} \int_{0}^{1} \log\left|re^{i \vartheta} - a\right| \, r \, dr \, d\vartheta \nonumber \\
        & = \int_{0}^{1} \int_{0}^{2\pi} \log\left|a - re^{i \vartheta}\right| \, d\vartheta \, r \, dr \nonumber\\
        & = I_1(a) + I_2(a),
    \end{align}
    where
   $$I_1(a) := \int_{0}^{|a|} \int_{0}^{2\pi} \log\left|a - re^{i \vartheta}\right| \, d\vartheta \, r \, dr, \qquad I_2(a) := \int_{|a|}^{1} \int_{0}^{2\pi} \log\left|a - re^{i \vartheta}\right| \, d\vartheta \, r \, dr.$$
    Thanks to Lemma \ref{integral}, one has
    \begin{align} \label{I_1_a}
        I_1(a) = 2 \pi \log |a| \int_{0}^{|a|} r \, dr = 2 \pi \log|a| \left[\frac{r^2}{2}\right]_{r = 0}^{r = |a|} =  \pi |a|^2 \log |a|,
    \end{align}
    while
    \begin{align} \label{I_2_a}
    I_2(a) & = 2\pi \int_{|a|}^1 r \log r \, dr = 2\pi \left[\frac{r^2}{2} \left(\log r - \frac{1}{2}\right)\right]_{r = |a|}^{r = 1} \nonumber \\
    & = - \frac{\pi}{2} - \pi |a|^2\left(\log |a| - \frac{1}{2}\right) = - \pi |a|^2 \log |a| + \frac{\pi}{2} \left(|a|^2 - 1\right).
    \end{align}
    The conclusion follows by \eqref{I_1_a} and \eqref{I_2_a} into \eqref{sum_I_j_a}.
\end{proof}

\begin{lem} \label{integral_cos_den}
    Let $a, b \in \R$ with $a > |b|$. Then, 
   $$\int_{0}^{2\pi} \frac{d \tau}{a-b\cos \tau} = \frac{2\pi}{\sqrt{a^2-b^2}}.$$
\end{lem}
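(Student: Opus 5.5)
The identity is classical, and I will prove it with a half-angle substitution combined with a residue computation. Since $a>|b|\geq 0$, the denominator $a-b\cos\tau$ is bounded below by $a-|b|>0$. The integrand is therefore smooth and $2\pi$-periodic, and the integral is finite. If $b=0$ the claim reduces to $2\pi/a$, which is immediate, so I assume $b\neq0$.

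The main route uses complex variables. I set $z=e^{i\tau}$, so that $d\tau=dz/(iz)$ and $\cos\tau=(z+z^{-1})/2$. The integral becomes
$$
\oint_{|z|=1}\frac{dz}{iz\bigl(a-\tfrac b2(z+z^{-1})\bigr)}
=\oint_{|z|=1}\frac{2i\,dz}{b z^2-2az+b}.
$$
The quadratic $bz^2-2az+b$ has roots
$$
z_\pm=\frac{a\pm\sqrt{a^2-b^2}}{b},
$$
which are real, distinct, and satisfy $z_+z_-=1$. Exactly one of them lies inside the unit disk. This is $z_0=(a-\sqrt{a^2-b^2})/b$, because
$$
|a-\sqrt{a^2-b^2}|=\frac{b^2}{a+\sqrt{a^2-b^2}}<|b|.
$$
The residue at $z_0$ is $2i/\bigl(b(z_0-z_1)\bigr)$, where $z_1$ is the other root, and $b(z_0-z_1)=-2\sqrt{a^2-b^2}$. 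The residue theorem then gives
$$
2\pi i\cdot\frac{2i}{-2\sqrt{a^2-b^2}}=\frac{2\pi}{\sqrt{a^2-b^2}}.
$$

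As a cross-check, the real-variable route works too. Substitute $t=\tan(\tau/2)$ on $(-\pi,\pi)$, using periodicity to shift the interval. This yields
$$
\int_{\R}\frac{2\,dt}{(a-b)+(a+b)t^2}=\frac{2\pi}{\sqrt{(a-b)(a+b)}},
$$
and both $a\pm b$ are positive by the hypothesis $a>|b|$.

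There is no real obstacle here. The only point requiring care is identifying which root lies inside the unit circle and getting the sign of the residue right; the half-angle computation serves as a check on that sign.
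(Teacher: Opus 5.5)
Your proof is correct. The paper's proof is exactly your cross-check: it substitutes $t=\tan(\tau/2)$, reduces the integral to $2\int_{\R}\frac{dt}{(a-b)+(a+b)t^2}$, and evaluates this as an arctangent integral.

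Your primary argument, the residue computation, is a genuinely different route, and it is also sound. The reduction to $\oint_{|z|=1}2i\,dz/(bz^2-2az+b)$ is right, as is locating the interior root through $z_+z_-=1$ and computing the residue $-i/\sqrt{a^2-b^2}$. It does, however, require the residue theorem. It also needs $b=0$ handled separately, because your root formula degenerates there.

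The half-angle route is purely real-variable and treats all $|b|<a$ uniformly, using only $a\pm b>0$. Your remark about shifting to $(-\pi,\pi)$ by periodicity, so that $\tan(\tau/2)$ is a bijection onto $\R$, makes explicit a point the paper leaves implicit when it substitutes directly on $[0,2\pi]$.

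What the contour approach buys is generality. It is the standard mechanism for integrating any rational function of $\cos\tau$ and $\sin\tau$. It also exhibits the Poisson-kernel structure behind the case $a=1+r^2$, $b=2r$, which is exactly how the paper uses this lemma in Lemma~\ref{integral_double}.
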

\begin{proof}
    Using the change of variables
   $$t = \tan\left(\frac{\tau}{2}\right),$$
    we have
   $$\cos\tau = \frac{1-t^2}{1+t^2}, \qquad d\tau = \frac{2}{1+t^2} \, dt.$$
    Hence, 
   $$a-b\cos \tau = a - b \frac{1-t^2}{1+t^2} = \frac{(a-b)+(a+b)t^2}{1+t^2}. $$
    Therefore, 
    \begin{align*}
        \int_{0}^{2\pi} \frac{d \tau}{a-b\cos \tau} &= 2 \int_{-\infty}^{\infty} \frac{dt}{(a-b)+(a+b)t^2} = \frac{2\pi}{\sqrt{(a-b)(a+b)}} = \frac{2\pi}{\sqrt{a^2-b^2}}.
    \end{align*}
\end{proof}

\begin{lem} \label{integral_double}
    One has
   $$\int_{0}^{2\pi} \int_{0}^{1} \frac{1 - r \cos \tau}{1 + r^2 - 2r \cos \tau} \, r \, dr \, d\tau = \pi.$$
\end{lem}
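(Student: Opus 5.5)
We need $\int_0^{2\pi}\int_0^1 \frac{1-r\cos\tau}{1+r^2-2r\cos\tau}\,r\,dr\,d\tau=\pi$. The plan is to carry out the angular integral first, for each fixed $r\in[0,1)$, using the holomorphic structure of the integrand, and then integrate in $r$. The endpoint $r=1$ is a null set for the outer integral, so it plays no role.

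For $0\le r<1$ and $z=re^{i\tau}$ we have
$$
\frac{1-r\cos\tau}{1+r^2-2r\cos\tau}=\operatorname{Re}\left[\frac{1}{1-re^{i\tau}}\right],
$$
since $\operatorname{Re}(1-\bar z)=1-r\cos\tau$ and $|1-z|^2=1+r^2-2r\cos\tau$. Expanding geometrically, $\frac{1}{1-re^{i\tau}}=\sum_{k\ge0}r^ke^{ik\tau}$, and the series converges uniformly in $\tau$ for fixed $r<1$. Integrating term by term, only $k=0$ survives, so
$$
\int_0^{2\pi}\frac{1-r\cos\tau}{1+r^2-2r\cos\tau}\,d\tau=2\pi,\qquad 0\le r<1.
$$
As a cross-check one can use Lemma \ref{integral_cos_den}. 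Write $1-r\cos\tau=\tfrac12\big[(1+r^2-2r\cos\tau)+(1-r^2)\big]$. The inner integral becomes $\pi+\tfrac{1-r^2}{2}\int_0^{2\pi}\frac{d\tau}{1+r^2-2r\cos\tau}$. Taking $a=1+r^2$ and $b=2r$ (so $a>|b|$ for $r<1$), we get $\sqrt{a^2-b^2}=1-r^2$, and the expression equals $\pi+\pi=2\pi$.

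The integrand is nonnegative, since $1-r\cos\tau\ge 1-r\ge0$. Tonelli's theorem therefore justifies exchanging the order of integration. This gives $\int_0^1 2\pi r\,dr=\pi$, as claimed.

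No step is genuinely hard. The only point needing care is Fubini near $r=1$, $\tau=0$, where the integrand is singular. Nonnegativity settles this, because the computed iterated integral is finite.
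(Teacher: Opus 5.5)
Your proof is correct. The cross-check you give is exactly the paper's argument. The paper splits $1-r\cos\tau=\tfrac12(1+r^2-2r\cos\tau)+\tfrac12(1-r^2)$ and evaluates $\int_0^{2\pi}\frac{d\tau}{1+r^2-2r\cos\tau}=\frac{2\pi}{1-r^2}$ via Lemma~\ref{integral_cos_den}, which relies on a half-angle substitution. It then integrates $2\pi r$ over $[0,1]$.

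Your primary route is different: you write the integrand as $\operatorname{Re}\,[1/(1-re^{i\tau})]$. The angular integral is then $2\pi$ times the value at the center of a function holomorphic near the closed unit disk in $e^{i\tau}$, namely $1$, which you obtain from the geometric series. This bypasses Lemma~\ref{integral_cos_den} entirely. It is also closer to how the lemma is used in the proof of Proposition~\ref{Fwelldef}, where the integrand appears as $\operatorname{Re}\,[e^{i\vartheta}/(e^{i\vartheta}-re^{i\eta})]=\operatorname{Re}\,[1/(1-re^{i(\eta-\vartheta)})]$.

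You also justify the change in the order of integration by nonnegativity and Tonelli's theorem. The statement integrates in $r$ first, while both computations integrate in $\tau$ first, and the paper makes this switch without comment.
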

\begin{proof}
    First we notice that 
   $$1-r\cos \tau = \frac{1}{2}\left(1+r^2-2r\cos \tau \right) + \frac{1}{2}\left(1-r^2\right).$$
    Consequently, 
   $$\frac{1 - r \cos \tau}{1 + r^2 - 2r \cos \tau} = \frac{1}{2} + \frac{1-r^2}{2\left(1 + r^2 - 2r \cos \tau\right)}.$$
    Integrating with respect to $\tau$ we get 
   $$\int_{0}^{2\pi} \frac{1 - r \cos \tau}{1 + r^2 - 2r \cos \tau} \, d\tau = \pi + \frac{1-r^2}{2} \int_{0}^{2\pi} \frac{d \tau}{1 + r^2 - 2r \cos \tau}.$$
    But, by Lemma \ref{integral_cos_den}, 
    \begin{align*}
        \int_{0}^{2\pi} \frac{d \tau}{1 + r^2 - 2r \cos \tau} & = \frac{2\pi}{\sqrt{\left(1+r^2\right)^2-4r^2}} = \frac{2\pi}{\sqrt{\left(1-r^2\right)^2}} = \frac{2\pi}{1-r^2,}
    \end{align*}
    because $r \in [0, 1)$. Thus,
   $$\int_{0}^{2\pi} \frac{1 - r \cos \tau}{1 + r^2 - 2r \cos \tau} \, d\tau = 2\pi.$$
    Finally,
   $$\int_{0}^{2\pi} \int_{0}^{1} \frac{1 - r \cos \tau}{1 + r^2 - 2r \cos \tau} \, r \, dr \, d\tau = 2\pi\int_{0}^{1} r \, dr = \pi.$$
\end{proof}
\begin{lem}
\label{lem:C1}
Let
$\mathbf m\geq 2$.
Then
\[
    \sum_{j=2}^{\mathbf m}
    \frac{1}{1- e^{i\frac{2\pi(j-1)}{\mathbf m}}}
    =
    \frac{\mathbf m-1}{2}.
\]
\end{lem}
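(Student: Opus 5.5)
To evaluate $\sum_{j=2}^{\mathbf m}\frac{1}{1-\omega^{j-1}}$ with $\omega:=e^{i\frac{2\pi}{\mathbf m}}$, the plan is to pair terms so that the imaginary parts cancel. Set $\zeta_k:=\omega^k$ for $k\in\{1,\ldots,\mathbf m-1\}$. These are exactly the $\mathbf m$-th roots of unity different from $1$. This set is invariant under complex conjugation, since $\overline{\zeta_k}=\zeta_{\mathbf m-k}$. Because the sum is therefore real, it equals the sum of the real parts, and it suffices to compute $\operatorname{Re}\frac{1}{1-\zeta}$ for a single $\zeta$ with $|\zeta|=1$, $\zeta\neq1$.

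For such $\zeta$, use $\overline{\zeta}=\zeta^{-1}$ to write
$$
\frac{1}{1-\zeta}+\overline{\left(\frac{1}{1-\zeta}\right)}
=\frac{1}{1-\zeta}+\frac{1}{1-\zeta^{-1}}
=\frac{1}{1-\zeta}-\frac{\zeta}{1-\zeta}=1 .
$$
Hence $\operatorname{Re}\frac{1}{1-\zeta}=\frac12$ for every unimodular $\zeta\neq1$. Equivalently, writing $\zeta=e^{i\theta}$ one gets $\frac{1}{1-e^{i\theta}}=\frac12+\frac i2\cot\frac\theta2$.

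Summing over the $\mathbf m-1$ nontrivial roots gives
$$
\sum_{j=2}^{\mathbf m}\frac{1}{1-e^{i\frac{2\pi(j-1)}{\mathbf m}}}=\frac{\mathbf m-1}{2}+\frac i2\sum_{k=1}^{\mathbf m-1}\cot\frac{\pi k}{\mathbf m}.
$$
The cotangent sum vanishes by the symmetry $k\mapsto \mathbf m-k$, since $\cot\frac{\pi(\mathbf m-k)}{\mathbf m}=-\cot\frac{\pi k}{\mathbf m}$.

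A second route is to note that the $1-\zeta_k$ are the roots of $P(w)=\frac{(1-w)^{\mathbf m}-1}{-w}$. Then $\sum 1/(1-\zeta_k)$ is the ratio of the linear to the constant coefficient of $P$, up to sign, which again gives $\frac{\mathbf m-1}{2}$.

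No step is a genuine obstacle. The only point needing care is the conjugation/pairing argument, and in particular that for odd or even $\mathbf m$ the possible self-paired term ($\zeta=-1$) has zero imaginary part. This is automatic from the formula $\operatorname{Re}\frac{1}{1-\zeta}=\frac12$.
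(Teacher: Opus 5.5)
Your proof is correct and follows essentially the paper's argument: the identity $\frac{1}{1-\zeta}+\frac{1}{1-\overline{\zeta}}=1$ for unimodular $\zeta\neq1$, combined with the invariance of the nontrivial roots of unity under conjugation. Writing it as $\operatorname{Re}\frac{1}{1-\zeta}=\frac12$ together with the reality of the sum is slightly cleaner than the paper's explicit pairing, since it avoids treating $\zeta=-1$ separately (a case that only occurs for even $\mathbf m$), and your Vieta-type second route is also valid.
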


\begin{proof}
Since
\[
    \frac{1}{1-z}
    +
    \frac{1}{1-\overline z}
    =
    1
\]
for every $z\in\mathbb C\setminus\{1\}$ with $|z|=1$, the terms
corresponding to conjugate roots of unity add up to $1$. Pairing the
$\mathbf m-1$ nontrivial roots of unity, and treating the root $-1$
separately when $\mathbf m$ is even, gives
\[
    \sum_{j=2}^{\mathbf m}
    \frac{1}{1-e^{i\frac{2\pi(j-1)}{\mathbf m}}}
    =
    \frac{\mathbf m-1}{2}.
\]
\end{proof}

	\bibliography{references}	
	\bibliographystyle{plain}

\end{document}